\title[Matrices with normal defect one]{Matrices with normal defect one}
\author[D.~S.~Kaliuzhnyi-Verbovetskyi]{Dmitry S.
Kaliuzhnyi-Verbovetskyi}
\address{Department of Mathematics \\
Drexel University\\
3141 Chestnut Str.\\
 Philadelphia, PA, 19104}
\email{dmitryk@math.drexel.edu}
\author[I. M. Spitkovsky]{Ilya M. Spitkovsky}
\address{Department of Mathematics \\
College of William and Mary\\
 Williamsburg, VA, 23187-8795}
\email{ilya@math.wm.edu}
\author[H. J. Woerdeman]{Hugo J. Woerdeman}
\address{Department of Mathematics \\
Drexel University\\
3141 Chestnut Str.\\
 Philadelphia, PA, 19104}
\email{hugo@math.drexel.edu}
\thanks{I. M. Spitkovsky and H. J. Woerdeman are partially supported by NSF grants DMS-0456625 and
DMS-0500678, respectively.}
\date{}
\newtheorem{thm}{Theorem}[section]
\newtheorem{cor}[thm]{Corollary}
\newtheorem{lem}[thm]{Lemma}
\newtheorem{rem}[thm]{Remark}
\newtheorem{ex}[thm]{Example}
\renewcommand{\Re}{\operatorname{Re}}
\renewcommand{\Im}{\operatorname{Im}}
\newcommand{\nd}{\operatorname{nd}}
\newcommand{\rnd}{\operatorname{rnd}}
\newcommand{\chd}{\operatorname{chd}}
\newcommand{\csd}{\operatorname{csd}}
\newcommand{\Null}{\operatorname{null}}
\newcommand{\ud}{\operatorname{ud}}
\newcommand{\diag}{\operatorname{diag}}
\newcommand{\col}{\operatorname{col}}
\newcommand{\rank}{\operatorname{rank}}
\newcommand{\spn}{\operatorname{span}}
\newcommand{\trace}{\operatorname{trace}}
\newcommand{\range}{\operatorname{range}}
\newcommand{\norm}[1]{\left\Vert#1\right\Vert}
\numberwithin{equation}{section}
\keywords{Normal defect, minimal normal completions, complex
symmetric operators and matrices, unitary and symmetric
extensions, commuting completions, separability problem.}
\subjclass{15A57; 47A20; 47B15; 81P68.}
\begin{document}

\maketitle

\begin{abstract}
A $n\times n$ matrix $A$ has normal defect one if it is not normal,
however can be embedded as a north-western block into a normal
matrix of size $(n+1)\times (n+1)$. The latter is called  a minimal
normal completion of $A$. A construction of all matrices with normal
defect one is given. Also, a simple procedure is presented which
allows one to check whether a given matrix has normal defect one,
and if this is the case --- to construct all its minimal normal
completions. A characterization of the generic case for each $n$
under the assumption  $\rank(A^*A-AA^*)=2$ (which is necessary for
$A$ to have normal defect one) is obtained. Both the complex and the
real cases are considered. It is pointed out how these results can be
used to solve the minimal commuting completion problem in the
classes of pairs of $n\times n$ Hermitian (resp., symmetric, or
symmetric/antisymmetric) matrices when the completed matrices are
sought of size $(n+1)\times(n+1)$. An application to the $2\times n$
separability problem in quantum computing is described.
\end{abstract}

\section{Introduction}\label{sec:intro}
A matrix $N\in\mathbb{C}^{n\times n}$ is called \emph{normal} if
$N^*N=NN^*$. For a non-normal
$A\in\mathbb{C}^{n\times n}$ it is natural to inquire what is the
minimal $p\in\mathbb{N}$ such that $A$ has a \emph{normal
completion} of size $(n+p)\times(n+p)$. In other words, what is
the smallest $p$ for which there exists a matrix of the form
\begin{equation}\label{comp} \begin{bmatrix} A & *\\
* & * \end{bmatrix}\in\mathbb{C}^{(n+p)\times (n+p)}\end{equation}
which is in fact normal? This minimal $p$ is called the \emph{normal
defect of $A$}, denoted $\nd(A)$, and a normal completion of size
$(n+\nd(A))\times(n+\nd(A))$ is called \emph{minimal}.

The normal completion problem as above was introduced in \cite{W},
and some observations were made there. One of them is that among
completions (\ref{comp}) there exist those being scalar multiples of a
unitary matrix. The smallest value of $p$ required for such a
completion is called the \emph{unitary
defect of $A$}, denoted $\ud(A)$, and the corresponding completions
are called \emph{minimal unitary completions of $A$}. Obviously,
$\nd(A)\le\ud(A)$.

The latter inequality gives a simple upper bound for  $\nd(A)$. Indeed,
$\ud(A)$ is simply  the number (counting the multiplicities) of the
singular values of $A$ different from $\norm{A}$, and is therefore
strictly less than $n$. Moreover, it was shown in \cite{W} that a
minimal unitary completion of $A$ can be constructed using the
singular value decomposition (SVD) of $A$ (see, e.g., \cite{HJ} for the
definition of the SVD).

It is easy to find examples of matrices with $\nd(A)$ different
from $\ud(A)$. For instance, if $A$ is normal and not a multiple
of a unitary matrix then $\nd(A)=0<\ud(A)$. However, in all such
examples known until recently, the matrix $A$ was {\em unitarily
reducible}, that is, unitarily similar to a block diagonal matrix
with non-trivial blocks of smaller size. It was therefore natural
to ask  \cite{W} (see also a further discussion in \cite{KW})
whether  the equality $\nd(A)=\ud(A)$ holds for all unitarily
irreducible matrices $A\in\mathbb{C}^{n\times n}$. We will show in
Examples \ref{ex:a} and \ref{ex:b} that this question has a
negative answer, and thus normal defect is indeed a different
characteristic of a matrix then its unitary defect.

A lower bound for $\nd(A)$ has been found in \cite{KW}:
\begin{equation}\label{lower_bound}
 \nd (A)\ge\max\{i_+(A^*A-AA^*),i_-(A^*A-AA^*)\},
\end{equation}
where $i_+(X)$ and $i_-(X)$ denote the number of positive
(negative) eigenvalues of a Hermitian matrix $X$.

For $2\times 2$ matrices, the unitary defect is at most $1$.
Therefore, any non-normal matrix $A$ of size $2\times 2$ has
normal defect $1$. Since $\trace(A^*A-AA^*)=0$, the  righthand
side of (\ref{lower_bound}) is 0 if $A\in{\mathbb C}^{2\times 2}$
is normal and 1 otherwise. In other words, for $n=2$
(\ref{lower_bound}) turns into an equality.  We will show in
Corollary \ref{n=3} that this is also true for matrices of size
$3\times 3$. On the other hand, our Example \ref{rc_ns} reveals
that for larger matrices a strict inequality in
\eqref{lower_bound} is possible.

The bulk of this paper is devoted to matrices with normal defect one.
According to (\ref{lower_bound}), all such matrices $A$ must satisfy
\begin{equation}\rank(A^*A-AA^*)=2\label{rank} \end{equation}
which throughout the paper will be referred to as the {\em rank
condition}. The manifold of $n\times n$ matrices satisfying
(\ref{rank}) will be denoted ${\frak M}_n$. \label{manif}

We obtain several equivalent characterizations of matrices
$A\in\mathbb{C}^{n\times n}$ with $\nd(A)=1$. Among others, we
prove in Section~\ref{sec:compl} that the following statements are
equivalent: \emph{\begin{itemize}
    \item[(i)] $\nd(A)=1$.
    \item[(ii)] There exist a contraction matrix $C\in\mathbb{C}^{n\times n}$ with $\ud(C)=1$, a diagonal
    matrix $D\in\mathbb{C}^{n\times n}$, and a scalar $\mu\in\mathbb{C}$
    such that
    \begin{equation*}
    A=CDC^*+\mu I_n.
    \end{equation*}
    \item[(iii)] The rank condition holds, and
    the equation
    \begin{equation*}
PA^*(x_1u_1+x_2u_2)=PA(\overline{x}_2u_1+\overline{x}_1u_2)
\end{equation*}
has a solution pair $x_1,x_2\in\mathbb{C}$ satisfying
\begin{equation*}
|x_1|^2-|x_2|^2=d.
\end{equation*}\end{itemize}
Here $u_1,u_2$ are the unit eigenvectors of the matrix $A^*A-AA^*$
corresponding to its nonzero eigenvalues $\lambda_1=d (>0)$ and
$\lambda_2=-d$, and $P=I_n-u_1u_1^*-u_2u_2^*$
 is the orthogonal projection of
$\mathbb{C}^n$ onto $\Null(A^*A-AA^*)$. \begin{itemize}    \item[(iv)]
There exist linearly independent $x, y \in {\mathbb C}^n$ such that
\begin{equation*} A^*A-AA^* = xx^* - yy^* \end{equation*}
and the vectors $x,y, A^*x, Ay$ are linearly dependent.
\end{itemize}
} The equivalences (i)$\Longleftrightarrow$(ii),
(i)$\Longleftrightarrow$(iii), and (i)$\Longleftrightarrow$(iv) are
proved in Theorems \ref{nd1}, \ref{thm1}, and \ref{normalext1},
respectively. Note that version (ii) is better suited for construction of
all matrices $A$ with $\nd(A)=1$ (Section~\ref{construct}), while (iii) is
used  to describe a procedure which allows one to check whether
$\nd(A)=1$, and if this is the case --- to construct all minimal normal
completions of $A$ (Section~\ref{shorter}). Finally, (iv) becomes
handy when solving a separability problem originated in quantum
computing (see description of Section~\ref{sec:sep} below).

Section~\ref{finer} provides a finer analysis which involves a certain
basis construction and a lemma on symmetric and unitary extensions
of certain matrices. This analysis allows us to refine the procedure
from Section \ref{shorter} and to describe the generic situation for
each $n$ under the assumption that rank condition (\ref{rank}) holds.
In other words,
certain topological characterization of the set of matrices with normal
defect one in each matrix dimension is presented.

We also obtain the following result:

\emph{If $A\in\mathbb{R}^{n\times n}$ and $\nd(A)=1$ then one can
choose a minimal normal completion of $A$ to be real as well.}

It is natural to study the minimal normal completion problem also
in the setting of real matrices,  and we treat it in a separate
Section~\ref{sec:re}. The real counterpart of the normal defect of
a matrix $A\in\mathbb{R}^{n\times n}$, denoted $\rnd(A)$, is
defined. The result stated above can be reformulated as follows:
$\rnd(A)=1$ if and only if $\nd(A)=1$. (The question on whether
$\rnd(A)=\nd(A)$ for an arbitrary $A\in\mathbb{R}^{n\times n}$
remains open.) This and other results in the real case are not
immediate consequences of their complex counterparts, and required
an additional study. Some statements in the real case are similar
to their complex analogues, however there are also some
differences. The real counterpart of characterization (ii) of
matrices with normal defect one (see above) is obtained for
matrices $A\in\mathbb{R}^{n\times n}$ of even size $n$ only, while
such a characterization and a construction of all real matrices
with $\rnd(A)=1$ in the case of odd $n$ are left as an open
problem. The real analogue of characterization (iii), as well as
the procedure for verification that $\rnd(A)=1$ and for
construction of all minimal real normal completions, have a
slightly different form which splits into two cases. The generic
situation in each matrix dimension is also described, however in
the real case the analysis happens to be more straightforward than
its counterpart in the complex case.

In Section \ref{sec:commut}, we show how to restate our results
from Sections \ref{sec:compl} and \ref{sec:re} in terms of
commuting completions of a pair of Hermitian (resp., symmetric and
antisymmetric) matrices, where the completed matrices are also
Hermitian (resp., symmetric and antisymmetric). The results for
pairs of Hermitian matrices are used then to solve an analogous
problem in the class of pairs of symmetric matrices.

 In Section \ref{sec:sep}, we use the
connection between the normal completion problem and the
$2\times n$ separability problem, that was established in \cite{Wsep},
to obtain Theorem \ref{sep} which gives the easily verifiable
necessary and sufficient conditions for a positive semidefinite matrix
$M\in\mathbb{C}^{2n\times 2n}$ with a rank one Schur complement
to be $2\times n$ separable. Moreover, a new proof is given for the
result by Woronowicz \cite{Woron} (see Theorem \ref{Woron}), which
establishes, for $n\le 3$,  the $2\times n$ separability for a positive
semidefinite matrix $M\in\mathbb{C}^{2n\times 2n}$ satisfying the
Peres test. 

\section{The complex case}\label{sec:compl}

\subsection{Construction of matrices with normal defect
one.}\label{construct}
\begin{thm}\label{nd1}
Let $A\in\mathbb{C}^{n\times n}$ be not normal. The following
statements are equivalent:
\begin{itemize}
    \item[(i)] $\nd(A)= 1$.
    \item[(ii)] There exist a contraction matrix $C\in\mathbb{C}^{n\times n}$ with $\ud(C)=1$, a diagonal
    matrix $D\in\mathbb{C}^{n\times n}$, and a scalar $\mu\in\mathbb{C}$
    such that
    \begin{equation}\label{repr1}
    A=CDC^*+\mu I_n.
    \end{equation}
    \item[(iii)] There exist a unitary matrix $V\in\mathbb{C}^{n\times n}$,
    a normal matrix $N\in\mathbb{C}^{n\times n}$, and scalars
    $t\colon 0\le t<1$, $\mu\in\mathbb{C}$ such that
    \begin{equation}\label{repr2}
    V^*AV=MNM+\mu I_n,
    \end{equation}
    where $M=\diag(1,\ldots,1,t)$.
\end{itemize}
\end{thm}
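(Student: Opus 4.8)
The plan is to establish the cycle (i)$\Rightarrow$(iii)$\Rightarrow$(ii)$\Rightarrow$(i), since (iii) is visibly a normalized version of (ii) and the passage between them should be essentially bookkeeping, while the genuine content lies in extracting the structure (iii) from the existence of a minimal normal completion.

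For (i)$\Rightarrow$(iii), suppose $\nd(A)=1$ and fix a normal completion
\[
\widetilde A=\begin{bmatrix} A & b\\ c^* & d\end{bmatrix}\in\mathbb{C}^{(n+1)\times(n+1)}.
\]
Being normal, $\widetilde A$ is unitarily diagonalizable; write $\widetilde A=U\Lambda U^*$ with $\Lambda$ diagonal. I would then look at how $A$ sits inside this picture: compressing to the first $n$ coordinates, $A=\Pi\widetilde A\,\Pi^*$ where $\Pi=\begin{bmatrix} I_n & 0\end{bmatrix}$. Writing $U=\begin{bmatrix} W\\ w^*\end{bmatrix}$ with $W\in\mathbb{C}^{n\times n}$ and $w\in\mathbb{C}^n$, normality of $\widetilde A$ forces $U$ unitary, so $WW^*=I_n-ww^*$ and $W^*W+ww^*\cdot(\text{scalar})$ relations hold; in particular $W$ is a contraction whose only singular value different from $1$ is a single value $t=\sqrt{1-\|w\|^2}\in[0,1)$, which is exactly the statement $\ud(W)=1$ (or $W$ unitary, the degenerate $t=1$ case, excluded because $A$ is not normal). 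Then $A=\Pi U\Lambda U^*\Pi^*=W\Lambda W^*$. The remaining task is to normalize: using the polar/SVD structure of $W$, write $W=V_1 M V_2^*$ with $V_1,V_2$ unitary and $M=\diag(1,\dots,1,t)$, absorb $V_2$ into the diagonal by setting $N=V_2^*\Lambda V_2$ (still normal) — wait, one must be careful that $M N M$ rather than $V_1 M V_2^*\Lambda V_2 M V_1^*$ appears, so I would instead factor $W=V M$ with a single unitary $V$ by choosing the SVD so that the right singular vectors are the standard basis (possible since we are free to choose which orthonormal eigenbasis of $\widetilde A$ to use, equivalently we may pre-rotate $A$ by $V$). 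Concretely: choose $V$ unitary so that $V^*A V = V^*W\Lambda W^*V$ has the compression structure with $V^*W$ of the form $M\cdot(\text{unitary})$; absorbing that trailing unitary into a new normal matrix $N$ yields $V^*AV = M N M$, and the additive scalar $\mu I_n$ and the shift of $t$ away from the boundary are handled by noting we only needed $\widetilde A-\mu I$ normal for any $\mu$, so $\mu$ can be taken $0$ here (the $\mu$ in (iii) provides extra freedom but is not forced).

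For (iii)$\Rightarrow$(ii), simply set $C=VMV^*$; this is a contraction (unitarily similar to $M$) with $\ud(C)=1$, and since $V^*AV=MNM+\mu I_n$ we get $A=V M N M V^*+\mu I_n = (VMV^*)(VNV^*)(VMV^*)+\mu I_n = C D' C^*+\mu I_n$ where $D'=VNV^*$ is normal but not yet diagonal; one then diagonalizes $D'=U_0 D U_0^*$ with $U_0$ unitary and $D$ diagonal, and replaces $C$ by $CU_0$, which is still a contraction with unitary defect one, giving $A=(CU_0)D(CU_0)^*+\mu I_n$ of the form (\ref{repr1}). For (ii)$\Rightarrow$(i), given $A=CDC^*+\mu I_n$ with $\ud(C)=1$, take a minimal unitary completion $\begin{bmatrix} C\\ c^*\end{bmatrix}$ of the $n\times(n+1)$ matrix (equivalently $U=\begin{bmatrix} C & c'\\ c^* & *\end{bmatrix}$ unitary of size $n+1$), and check directly that $\begin{bmatrix} D & 0\\ 0 & \delta\end{bmatrix}$ for a suitable scalar $\delta$ conjugated by this $U$ and shifted by $\mu$ produces a normal $(n+1)\times(n+1)$ matrix with $A$ in its north-west corner; normality is immediate because it is unitarily similar to a diagonal matrix, and it is not normal of size $n$ by hypothesis, so $\nd(A)=1$.

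The main obstacle is the normalization step inside (i)$\Rightarrow$(iii): turning the raw identity $A=W\Lambda W^*$ (with $W$ a contraction of unitary defect one obtained from the bottom-deleted rows of an eigenvector matrix of $\widetilde A$) into the rigid form $V^*AV=MNM+\mu I_n$ with the specific diagonal $M=\diag(1,\dots,1,t)$. The point is that one has the freedom to choose \emph{which} orthonormal eigenbasis of $\widetilde A$ to use and hence which unitary $U$ appears, and this freedom must be spent exactly right so that the left and right unitary factors of $W$ can be made to coincide (so that $W\Lambda W^*$ becomes $M N M$ with $N$ normal), rather than producing the a priori more general $W\Lambda W^*$ with independent left/right singular vectors. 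I expect this to require a small argument showing that among all minimal normal completions and all their eigenbases one can always arrange the right singular vectors of $W$ to be standard basis vectors — or, alternatively, absorbing any residual unitary into $N$ while checking $M(\text{unitary})N(\text{unitary})^*M$ can still be rewritten as $MN'M$, which works because $M$ commutes with block-diagonal unitaries respecting the $t$-block.
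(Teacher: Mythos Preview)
Your (iii)$\Rightarrow$(ii) and (ii)$\Rightarrow$(i) are essentially fine and match the paper's argument (the paper in fact proves (i)$\Leftrightarrow$(ii) first and then (ii)$\Leftrightarrow$(iii) via the SVD of $C$, which is the cleaner order). The genuine gap is in your (i)$\Rightarrow$(iii), and it is exactly the place you flagged as ``the main obstacle,'' but the difficulty is not the one you identified.

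You write $U=\begin{bmatrix} W\\ w^*\end{bmatrix}$ with $W\in\mathbb{C}^{n\times n}$ and then claim both $A=W\Lambda W^*$ and $WW^*=I_n-ww^*$. These two cannot hold simultaneously: if $W$ denotes the full top $n$ rows of the $(n+1)\times(n+1)$ unitary $U$, then $W\in\mathbb{C}^{n\times(n+1)}$ is a \emph{co-isometry} ($WW^*=I_n$, all singular values equal $1$), so your formula for $t$ collapses; if instead $W$ is the square block $U_{11}$, then $A=U_{11}\Lambda_0 U_{11}^*+\mu\,U_{12}U_{12}^*$ with $\Lambda=\diag(\Lambda_0,\mu)$, and the second term is not absorbed by $W\Lambda W^*$. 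Either way $A=W\Lambda W^*$ fails, and the whole normalization discussion that follows is built on this.

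What actually makes the argument work --- and what the paper does --- is to partition $\Lambda=\diag(\Lambda_0,\mu)$, separating off a \emph{single} eigenvalue $\mu$, and then use the row relation $U_{11}U_{11}^*+U_{12}U_{12}^*=I_n$ to rewrite
\[
A=U_{11}\Lambda_0 U_{11}^*+\mu\,U_{12}U_{12}^*=U_{11}(\Lambda_0-\mu I_n)U_{11}^*+\mu I_n.
\]
This is where the scalar shift $\mu I_n$ in (ii) and (iii) comes from; it is \emph{not} ``extra freedom'' that ``can be taken $0$,'' contrary to what you wrote. With $C=U_{11}$ and $D=\Lambda_0-\mu I_n$ you land directly in (ii), and then a single SVD $C=V\,\diag(1,\dots,1,t)\,W^*$ with $N=W^*DW$ gives (iii) with no need to worry about matching left and right singular vectors or ``absorbing residual unitaries into $N$.'' Your perceived obstacle disappears once the $\mu$-shift is in place.
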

\begin{proof}
(i)$\Longleftrightarrow$(ii) Let $\nd(A)= 1$, and let
$\begin{bmatrix} A & x\\
y^* & z
\end{bmatrix}\in\mathbb{C}^{(n+1)\times(n+1)}$ be a minimal normal
completion of $A$. Then there exist a diagonal matrix
$\Lambda\in\mathbb{C}^{n\times n}$, a scalar $\mu\in\mathbb{C}$,
and a unitary matrix $U=\begin{bmatrix} U_{11} & U_{12}\\
U_{21} & U_{22}
\end{bmatrix}\in\mathbb{C}^{(n+1)\times(n+1)}$ such that
\begin{equation}\label{diag}
\begin{bmatrix} A & x\\
y^* & z
\end{bmatrix}=\begin{bmatrix} U_{11} & U_{12}\\
U_{21} & U_{22}
\end{bmatrix}\begin{bmatrix} \Lambda & 0\\
0 & \mu
\end{bmatrix}\begin{bmatrix} U_{11}^* & U_{21}^*\\
U_{12}^* & \overline{U}_{22}
\end{bmatrix}.
\end{equation}
The latter equality is equivalent to the following system:
\begin{eqnarray}
A &= & U_{11}\Lambda U_{11}^*+\mu
U_{12}U_{12}^*=U_{11}(\Lambda-\mu
I_n) U_{11}^*+\mu I_n, \label{A}\\
x &=& U_{11}\Lambda U_{21}^*+\mu
U_{12}\overline{U}_{22}=U_{11}(\Lambda-\mu
I_n) U_{21}^*, \label{x}\\
y^* &=& U_{21}\Lambda U_{11}^*+\mu
{U}_{22}U_{12}^*=U_{21}(\Lambda-\mu
I_n) U_{11}^*, \label{y*}\\
z &= & U_{21}\Lambda U_{21}^*+\mu
U_{22}\overline{U}_{22}=U_{21}(\Lambda-\mu I_n)
U_{21}^*+\mu.\label{z}
\end{eqnarray}
Setting $C=U_{11}$ and $D=\Lambda-\mu I_n$, we obtain
\eqref{repr1} from \eqref{A}.

Conversely, if \eqref{repr1} holds, we set $U_{11}=C$,
$\Lambda=D+\mu I_n$ and obtain \eqref{A}. For $U=\begin{bmatrix} U_{11} & U_{12}\\
U_{21} & U_{22}
\end{bmatrix}$ a minimal unitary completion of $C$, we define
$x,y\in\mathbb{C}^n$ and $z\in\mathbb{C}$ by \eqref{x}--\eqref{z}.
Then \eqref{diag} holds, i.e., the
matrix $\begin{bmatrix} A & x\\
y^* & z
\end{bmatrix}\in\mathbb{C}^{(n+1)\times(n+1)}$ is a normal
completion of $A$, and thus $\nd{A}= 1$.

(ii)$\Longleftrightarrow$(iii) If (ii) holds, let $C=V\diag(1,\ldots,1,t)W^*$
be the SVD of $C$ (here $V,W\in\mathbb{C}^{n\times n}$ are unitary,
$0\le t<1$, and $M=\diag(1,\ldots,1,t)\in\mathbb{C}^{n\times n}$).
Then, clearly, $N=W^*DW$ is normal, and \eqref{repr2} follows.

Conversely, if \eqref{repr2} holds, then $N=W^*DW$ with $D$
diagonal and $W$ unitary. Clearly, for $C=V\diag(1,\ldots,1,t)W^*$
we have $\ud(C)=1$, and \eqref{repr1} follows.
\end{proof}

\begin{rem}\label{not_normal}
\rm{Observe that the matrix $A$ given by  (\ref{repr2}) happens to
be normal if and only if the product $MNM$ is normal, that is
\begin{equation}\label{nocon} MNM^2N^*M=MN^*M^2NM. \end{equation}  Since $N$ itself is normal,
(\ref{nocon}) holds if and only if \begin{equation}\label{nocon1}
MNZN^*M= MN^*ZNM,
\end{equation} where $Z=\diag(0,\ldots,0,1)$. Partitioning $N$ as \[
N=\left[\begin{matrix}N_0 & g \\ h^* & \alpha\end{matrix}\right],
\] where $\alpha$ is scalar, and rewriting (\ref{nocon1})
block-wise, we see that it is equivalent to \[ gg^*=hh^*,\quad
t\alpha h=t\overline{\alpha}g.\] These
conditions mean simply that $g$ differs from $h$  by a scalar
multiple of absolute value one and, if $t\alpha\neq 0$, this
scalar must be $\alpha/\overline{\alpha}$. Consequently, $A$ is
not normal if and only if this is not the case.

Observe also that if $t\neq 0$ (so that $M$ is invertible) and $N$ is
also invertible, then (\ref{nocon}) can be written as
\begin{equation}\label{com} M^2N^*N^{-1}=N^{-1}N^*M^2.\end{equation}  But $N$ is
normal, so that $N^*$ commutes with $N^{-1}$. Condition
(\ref{com}) therefore means simply that $N^*N^{-1}$ ($=N^{-1}N^*$)
commutes with $M^2$. In other words, $A$ in this case is normal if
and only if $e_n:=\col(0,\ldots,0,1)$ is an eigenvector of
$N^*N^{-1}$.  In turn, this happens if and only if $e_n$ belongs
to the sum of eigenspaces of $N$ with the corresponding
eigenvalues lying on the same line through the origin.}
\end{rem}

Representation \eqref{repr1} or \eqref{repr2} in Theorem \ref{nd1},
together with Remark \ref{not_normal}, allow one to construct all
matrices $A$ with $\nd(A)= 1$. However, as we mentioned in Section
\ref{sec:intro}, this does not give an easy way to check whether a
given matrix has normal defect one. A procedure for this is our further
goal.

\subsection{Identification of matrices with nd$\mathbf{(A)=1}$ and construction of all minimal
normal completions of $\mathbf{A}$}\label{shorter} In the following
two theorems, we establish necessary and sufficient conditions for a
matrix $A$ to have normal defect one, and for any matrix $A$ with
$\nd(A)=1$ we describe all its minimal normal completions. Here and
throughout the rest of the paper, we set $\mathbb{T}=\{
z\in\mathbb{C}\colon |z|=1\}$.

\begin{thm}\label{thm1}
Let $A\in\mathbb{C}^{n\times n}$. Then
\begin{itemize}
    \item[(i)] $\nd(A)=1$ if and only if $\rank(A^*A-AA^*)=2$ and
    the equation
    \begin{equation}\label{eq}
PA^*(x_1u_1+x_2u_2)=PA(\overline{x}_2u_1+\overline{x}_1u_2)
\end{equation}
has a solution pair $x_1,x_2\in\mathbb{C}$ satisfying
\begin{equation}\label{cond}
|x_1|^2-|x_2|^2=d.
\end{equation}\end{itemize}
Here $u_1,u_2\in\mathbb{C}^n$ are the unit eigenvectors of the
matrix $A^*A-AA^*$ corresponding to its nonzero eigenvalues
$\lambda_1=d (>0)$ and $\lambda_2=-d$, and
\begin{equation}\label{P}
P=I_n-u_1u_1^*-u_2u_2^*
\end{equation}
 is the orthogonal projection of
$\mathbb{C}^n$ onto $\Null(A^*A-AA^*)$. \begin{itemize}    \item[(ii)]
If $\nd(A)=1$, $x_1$ and $x_2$ satisfy \eqref{eq} and \eqref{cond}, and
$\mu\in\mathbb{T}$ is arbitrary then the matrix
\begin{equation}\label{ncompl}
B=\begin{bmatrix} A & \mu(x_1u_1+x_2u_2)\\
\overline{\mu}({x}_2u_1^*+{x}_1u_2^*) & z
\end{bmatrix}
\end{equation}
is a minimal normal completion of $A$. Here
\begin{equation}\label{z_def}
z=a_{11}-\frac{1}{d}\left(x_2(a_{12}\overline{x}_1-\overline{a}_{21}x_2)+
x_1(\overline{a}_{12}x_1-a_{21}\overline{x}_2)\right)
\end{equation}
and
\begin{equation}\label{a's}
a_{11}=u_1^*Au_1,\quad a_{12}=u_1^*Au_2,\quad a_{21}=u_2^*Au_1.
\end{equation}\end{itemize}
All minimal normal completions of $A$ arise in this way.
\end{thm}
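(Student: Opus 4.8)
The plan is to work directly with the normality equation for a candidate completion $B=\begin{bmatrix} A & x\\ y^{*} & z\end{bmatrix}\in\mathbb{C}^{(n+1)\times(n+1)}$. Comparing the blocks of $B^{*}B$ and $BB^{*}$, and noting that $\trace(A^{*}A-AA^{*})=0$ turns the $(2,2)$-entry identity $\norm{x}=\norm{y}$ into a consequence of the $(1,1)$-block, one finds that $B$ is normal exactly when
\begin{equation*}
A^{*}A-AA^{*}=xx^{*}-yy^{*}\qquad\text{and}\qquad A^{*}x-Ay=\overline{z}\,x-zy .
\end{equation*}
The whole of Theorem~\ref{thm1} is then an analysis of these two relations, carried out in the orthogonal decomposition $\mathbb{C}^{n}=\spn\{u_{1},u_{2}\}\oplus\Null(A^{*}A-AA^{*})$.

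For the forward direction, and for the assertion that every minimal normal completion has the form \eqref{ncompl}, start from such a $B$. Since $A$ is not normal, $xx^{*}-yy^{*}=A^{*}A-AA^{*}\neq 0$; by \eqref{lower_bound} both $i_{+}(A^{*}A-AA^{*})$ and $i_{-}(A^{*}A-AA^{*})$ are at most $1$, so this matrix has rank at most $2$, and being nonzero and traceless it has rank exactly $2$ with eigenvalues $d>0$ and $-d$, which is \eqref{rank}. As $\range(xx^{*}-yy^{*})=\spn\{u_{1},u_{2}\}$ is $2$-dimensional and contained in $\spn\{x,y\}$, the vectors $x,y$ are linearly independent and $\spn\{x,y\}=\spn\{u_{1},u_{2}\}$. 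Writing $x=\xi_{1}u_{1}+\xi_{2}u_{2}$, $y=\eta_{1}u_{1}+\eta_{2}u_{2}$ and substituting into the first relation, the matrix $S=\begin{bmatrix}\xi_{1}&\eta_{1}\\ \xi_{2}&\eta_{2}\end{bmatrix}$ satisfies $S\,\diag(1,-1)\,S^{*}=d\,\diag(1,-1)$, hence also $S^{*}\diag(1,-1)S=d\,\diag(1,-1)$; reading off the entries gives $\xi_{1}\neq 0$, $|\xi_{1}|^{2}-|\xi_{2}|^{2}=d$, $|\eta_{1}|=|\xi_{2}|$, $|\eta_{2}|=|\xi_{1}|$ and $\overline{\xi_{1}}\eta_{1}=\overline{\xi_{2}}\eta_{2}$. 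Picking $\mu\in\mathbb{T}$ with $\mu^{2}=\eta_{2}/\overline{\xi_{1}}$ and setting $x_{1}=\xi_{1}/\mu$, $x_{2}=\xi_{2}/\mu$, these relations force $\mu\overline{x}_{1}=\eta_{2}$ and $\mu\overline{x}_{2}=\eta_{1}$, so $B$ is of the form \eqref{ncompl} and \eqref{cond} holds. Applying $P$ to the second relation and using $Px=Py=0$ gives $PA^{*}x=PAy$, i.e.\ \eqref{eq} after cancelling $\mu$; applying $I_{n}-P=u_{1}u_{1}^{*}+u_{2}u_{2}^{*}$ instead yields a $2\times2$ system for $z$ whose unique solution (Cramer's rule; division by $|x_{1}|^{2}-|x_{2}|^{2}=d$ is legitimate by \eqref{cond}) is precisely \eqref{z_def}. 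Thus $\nd(A)=1$ implies \eqref{rank} and solvability of \eqref{eq}--\eqref{cond}, and every minimal normal completion arises as in (ii).

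For the converse, and the rest of (ii): suppose \eqref{rank} holds, $x_{1},x_{2}$ solve \eqref{eq} and \eqref{cond}, and $\mu\in\mathbb{T}$ is arbitrary; define $x,y,z,B$ by \eqref{ncompl} and \eqref{z_def}. The identity $xx^{*}-yy^{*}=A^{*}A-AA^{*}$ is immediate from \eqref{cond} via the same $2\times2$ computation, now with $S=\mu\begin{bmatrix}x_{1}&\overline{x}_{2}\\ x_{2}&\overline{x}_{1}\end{bmatrix}$; and applying $P$ to $A^{*}x-Ay-(\overline{z}x-zy)$ kills it by \eqref{eq}. So one is left with the $\spn\{u_{1},u_{2}\}$-component of the second relation, i.e.\ the two scalar equations obtained by pairing $A^{*}x-Ay=\overline{z}x-zy$ with $u_{1}$ and $u_{2}$; viewing $z$ and $\overline{z}$ as independent unknowns the associated $2\times2$ system is nonsingular (determinant $d\neq 0$), so consistency of these two equations with $z$ given by \eqref{z_def} boils down to a single identity, which a direct computation shows to be equivalent, in view of \eqref{cond}, to $u_{1}^{*}Au_{1}=u_{2}^{*}Au_{2}$. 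The one genuinely non-routine point is therefore to see that equality of these two diagonal entries of the compression of $A$ to $\Null(A^{*}A-AA^{*})^{\perp}$ is \emph{forced} by \eqref{rank} together with solvability of \eqref{eq}--\eqref{cond} (it fails for general $A$ satisfying only \eqref{rank}): this is where the rank condition and \eqref{eq} genuinely interact, through the relations obtained by compressing $A^{*}A-AA^{*}$ to $\spn\{u_{1},u_{2}\}$ and to $\Null(A^{*}A-AA^{*})$. Granting this, the two scalar equations are consistent, their common solution is \eqref{z_def}, $B$ is a normal completion of size $n+1$, and --- $A$ not being normal --- $\nd(A)=1$, with (ii) following at once. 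Everything except this equal-diagonal/consistency step is routine $2\times2$ linear algebra together with the block form of $B^{*}B-BB^{*}$.
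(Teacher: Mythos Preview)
Your overall strategy is the same as the paper's: reduce normality of $B$ to the pair of relations $A^{*}A-AA^{*}=xx^{*}-yy^{*}$ and $A^{*}x-Ay=\overline{z}x-zy$, then analyze them relative to the splitting $\mathbb{C}^{n}=\spn\{u_{1},u_{2}\}\oplus\range(P)$. The paper does the same thing after first passing to $\widetilde{A}=U^{*}AU$, but that is only bookkeeping.

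There is, however, a genuine gap at exactly the point you flag as ``the one genuinely non-routine point''. You correctly identify that consistency of the two scalar equations on $\spn\{u_{1},u_{2}\}$ is equivalent to $u_{1}^{*}Au_{1}=u_{2}^{*}Au_{2}$, but you then (a) do not prove this identity, writing only ``Granting this'', and (b) misdiagnose its source, asserting that it requires solvability of \eqref{eq}--\eqref{cond} and ``fails for general $A$ satisfying only \eqref{rank}''. In fact the identity is a consequence of the rank condition \emph{alone}: since $A^{*}A-AA^{*}=d\,u_{1}u_{1}^{*}-d\,u_{2}u_{2}^{*}$, one has
\[
0=\trace\bigl(A(A^{*}A-AA^{*})\bigr)=d\bigl(u_{1}^{*}Au_{1}-u_{2}^{*}Au_{2}\bigr),
\]
the first equality by cyclicity of the trace. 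This is precisely how the paper handles it (their Lemma~\ref{other_nec}, the line $\trace(\widetilde{A}H)=0$), and once you insert this one-line argument your proof is complete. Your parenthetical remark ``this is where the rank condition and \eqref{eq} genuinely interact'' should therefore be deleted: equation \eqref{eq} plays no role here, and the step is entirely routine.
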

\begin{thm}\label{normalext1} Let $A\in\mathbb{C}^{n\times n}$. Then $\nd (A)=1$ if and only if
there exist linearly independent $x, y \in {\mathbb C}^n$ such
that
\begin{equation}\label{xy_eq} A^*A-AA^* = xx^* - yy^* \end{equation}
and the vectors $x,y, A^*x, Ay$ are linearly dependent. In this
case, there exist $z\in {\mathbb C}$ and $\nu\in\mathbb{T}$ such
that the matrix
\begin{equation}\label{nm}
B=\begin{bmatrix} A & \nu x \\ y^* & z
\end{bmatrix}\end{equation} is normal.
\end{thm}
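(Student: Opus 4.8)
The plan is to derive Theorem~\ref{normalext1} from Theorem~\ref{thm1}, which already supplies a complete characterization of matrices with normal defect one via equation~\eqref{eq} and condition~\eqref{cond}. The first task is to translate between the two parametrizations. Write $A^*A-AA^* = d\,u_1u_1^* - d\,u_2u_2^*$ as in Theorem~\ref{thm1}. If $x,y$ are linearly independent with $A^*A-AA^*=xx^*-yy^*$, then $\spn\{x,y\}$ must coincide with $\spn\{u_1,u_2\}$ (both equal the range of $A^*A-AA^*$, which is two-dimensional since the rank condition holds), so $x = \alpha_1 u_1 + \alpha_2 u_2$, $y = \beta_1 u_1 + \beta_2 u_2$ for suitable scalars. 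Conversely, one should check that any solution pair $x_1,x_2$ of \eqref{eq}--\eqref{cond} produces, via $x = x_1 u_1 + x_2 u_2$ together with an appropriately chosen $y$, a pair satisfying \eqref{xy_eq}; the natural candidate, reading off the adjoint structure in \eqref{ncompl}, is $y = \overline{x}_2 u_1 + \overline{x}_1 u_2$. One then verifies $xx^* - yy^* = (|x_1|^2-|x_2|^2)u_1u_1^* - (|x_1|^2-|x_2|^2)u_2u_2^* = d(u_1u_1^*-u_2u_2^*)$ using \eqref{cond}, noting the cross terms $x_1\overline{x}_2\,u_1u_2^* + \overline{x}_1 x_2\, u_2 u_1^*$ cancel against the corresponding terms from $yy^*$ precisely because $y$ has the conjugate-swapped coefficients. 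Linear independence of $x,y$ follows from $|x_1|^2 \neq |x_2|^2$ (as $d>0$).

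Next I would handle the ``linearly dependent'' condition on $x,y,A^*x,Ay$. With $x = x_1u_1 + x_2 u_2$ and $y = \overline{x}_2 u_1 + \overline{x}_1 u_2$ lying in $\spn\{u_1,u_2\}$, linear dependence of the four vectors is equivalent to $A^*x$ and $Ay$ being expressible modulo $\spn\{u_1,u_2\}$ as a dependent pair — more precisely, since $x,y$ already span a $2$-dimensional space, the four are dependent iff the projections $PA^*x$ and $PAy$ onto $\Null(A^*A-AA^*)$ are linearly dependent, OR one of them vanishes in a way making the rank drop. But equation \eqref{eq} says exactly $PA^*x = PAy$ (with $x = x_1u_1+x_2u_2$ and $\overline{x}_2 u_1 + \overline{x}_1 u_2 = y$), so $PA^*x - PAy = 0$, giving a nontrivial linear relation $A^*x - Ay \in \spn\{u_1,u_2\} = \spn\{x,y\}$ among $x,y,A^*x,Ay$. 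This shows \eqref{eq}$\Rightarrow$ dependence. For the converse direction one argues that if $x,y,A^*x,Ay$ are dependent with $x,y$ independent, then $\spn\{A^*x,Ay\}$ together with $\spn\{x,y\}$ is at most $3$-dimensional, and after subtracting the $\spn\{x,y\}$-components one gets a nontrivial relation $c_1 PA^*x + c_2 PAy = 0$; a scaling/phase normalization argument (replacing $x$ by $\lambda x$, $y$ by $\overline{\lambda}^{-1}y$ or similar, exploiting the freedom that preserves \eqref{xy_eq}) then reduces this to $PA^*x = PAy$, which is \eqref{eq}.

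The normal completion \eqref{nm} is then read off directly from Theorem~\ref{thm1}(ii): with $x = x_1 u_1 + x_2 u_2$ we have $\mu(x_1 u_1 + x_2 u_2) = \mu x$ for the north-east block, and the south-west block $\overline{\mu}(x_2 u_1^* + x_1 u_2^*) = \overline{\mu}\, y^*$. Setting $\nu = \mu \in \mathbb{T}$ and taking $z$ as in \eqref{z_def} yields the matrix $B$ of the form \eqref{nm}, and it is normal by Theorem~\ref{thm1}(ii). I expect the main obstacle to be the converse implication in the middle paragraph: carefully showing that an \emph{arbitrary} linear-dependence relation among $x,y,A^*x,Ay$ can, after exploiting the residual scaling freedom in the factorization \eqref{xy_eq}, be normalized to the precise symmetric form \eqref{eq} with the precise normalization \eqref{cond}. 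This requires tracking how rescaling $x \mapsto \lambda x$, $y \mapsto \mu y$ interacts with both the quadratic constraint $xx^*-yy^* = d(u_1u_1^*-u_2u_2^*)$ (which forces $|\lambda|=|\mu|=1$ and couples the phases) and with the dependence relation, and checking that the remaining one-parameter phase freedom suffices to kill the unwanted coefficient. The rest is bookkeeping that follows the template already established in the proof of Theorem~\ref{thm1}.
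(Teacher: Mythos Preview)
Your overall strategy---reduce to Theorem~\ref{thm1} by writing $x=x_1u_1+x_2u_2$, $y=\overline{x}_2u_1+\overline{x}_1u_2$ and translating \eqref{eq} into the linear-dependence condition---is exactly the route the paper takes. The forward direction and the construction of the completion \eqref{nm} are fine (the paper does the forward direction more directly via Lemma~\ref{nec_suf_cond}, reading off $A^*x-Ay=\overline{z}x-zy$, but your route through Theorem~\ref{thm1} also works).

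The gap is precisely where you flagged it. From the dependence of $x,y,A^*x,Ay$ you correctly extract a relation $c_1\,PA^*x+c_2\,PAy=0$ with $(c_1,c_2)\neq(0,0)$. You then propose to normalize this to $PA^*x'=PAy'$ using the freedom that preserves \eqref{xy_eq}. But that freedom consists only of independent \emph{phase} rotations $x\mapsto e^{i\alpha}x$, $y\mapsto e^{i\beta}y$ (your suggestion $x\mapsto\lambda x$, $y\mapsto\overline{\lambda}^{-1}y$ does not preserve $xx^*-yy^*$ unless $|\lambda|=1$), and phase rotations can adjust the argument of $c_2/c_1$ but never its modulus. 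So the normalization succeeds only if you already know $|c_1|=|c_2|$, equivalently $\|PA^*x\|=\|PAy\|$. This equal-norm fact is the missing ingredient: it does \emph{not} follow from \eqref{xy_eq} alone but requires the identities $uu^*=qq^*$, $vv^*=ww^*$, $uv^*=wq^*$ of Lemma~\ref{other_nec}, which encode the structure of $A^*A-AA^*$ beyond its range. The paper's proof invokes exactly these three identities at the corresponding step to conclude that the two projected vectors have equal norm, hence (being linearly dependent) differ by a unimodular scalar $e^{i\phi}$; a single phase twist then finishes the job. Once you insert this norm computation, your argument goes through and coincides with the paper's.
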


In order to prove Theorems \ref{thm1} and \ref{normalext1} we will
need several auxiliary statements.

\begin{lem}\label{nec_suf_cond}
Let $A\in\mathbb{C}^{n\times n}$. Then $\nd(A)=1$ if and only if
there exist linearly independent vectors $x,y\in\mathbb{C}^n$ and
a scalar $z\in\mathbb{C}$ such that
\begin{equation}\label{id1}
A^*A-AA^*=xx^*-yy^*,
\end{equation}
\begin{equation}\label{id2}
(A-zI_n)^*x=(A-zI)y.
\end{equation}
\end{lem}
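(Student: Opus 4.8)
The plan is to reduce the normality of an $(n+1)\times(n+1)$ completion of $A$ to a small system of block identities. Recall that $\nd(A)=1$ means exactly that $A$ is not normal, yet admits a normal completion, which (being of size $(n+1)\times(n+1)$) has the form $B=\begin{bmatrix} A & x\\ y^* & z\end{bmatrix}$ for some $x,y\in\mathbb{C}^n$ and $z\in\mathbb{C}$. First I would compute $B^*B$ and $BB^*$ in $2\times2$ block form and equate them. The $(1,1)$ blocks give $A^*A+yy^*=AA^*+xx^*$, i.e.\ \eqref{id1}; the two off-diagonal blocks both give $A^*x+zy=Ay+\overline{z}x$, which rearranges to \eqref{id2}; and the $(2,2)$ blocks give $\norm{x}^2=\norm{y}^2$. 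Hence $B$ is normal if and only if \eqref{id1}, \eqref{id2}, and $\norm{x}^2=\norm{y}^2$ hold together.

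The key observation is that the norm constraint is redundant: taking the trace in \eqref{id1} yields $0=\trace(A^*A-AA^*)=\trace(xx^*-yy^*)=\norm{x}^2-\norm{y}^2$. So $B$ is normal precisely when \eqref{id1} and \eqref{id2} hold. It then remains to match ``$A$ is not normal'' with ``$x,y$ linearly independent'' in the presence of \eqref{id1}. If $A$ is normal, \eqref{id1} reads $xx^*=yy^*$; combined with $\norm{x}=\norm{y}$ this forces either $x=y=0$ or $y=cx$ with $|c|=1$, so $x,y$ are linearly dependent. Conversely, if $x,y$ are linearly dependent, then (since $\norm{x}=\norm{y}$ makes $x=0$ equivalent to $y=0$) either $x=y=0$ or $y=cx$ with $x\neq0$ and necessarily $|c|=1$; in both cases $xx^*-yy^*=0$, so \eqref{id1} shows $A$ is normal.

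Putting these together gives both implications. If $\nd(A)=1$, take a normal completion $B=\begin{bmatrix} A & x\\ y^* & z\end{bmatrix}$; then \eqref{id1}--\eqref{id2} hold, and since $A$ is not normal the pair $x,y$ is linearly independent. Conversely, given linearly independent $x,y$ and a scalar $z$ satisfying \eqref{id1}--\eqref{id2}, the matrix $B$ above is normal while $A$ is not, so $A$ has a normal completion of size $(n+1)\times(n+1)$ but none of size $n\times n$, i.e.\ $\nd(A)=1$. This is essentially a bookkeeping argument; the only point needing care is the degenerate-vector analysis translating linear (in)dependence of $x,y$ into (non)normality of $A$, together with the trace identity that eliminates the superfluous norm-equality condition.
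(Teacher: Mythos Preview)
Your proof is correct and follows essentially the same approach as the paper: both compute $B^*B-BB^*$ blockwise, observe that the norm condition $\|x\|=\|y\|$ is redundant via the trace identity, and argue that (given \eqref{id1}) $A$ is non-normal if and only if $x,y$ are linearly independent. You simply spell out the last equivalence in more detail than the paper, which dismisses it with ``clearly.''
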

\begin{proof}
 If $\nd(A)=1$ then there exists a normal matrix $B=\begin{bmatrix} A & x\\
y^* & z
\end{bmatrix}\in\mathbb{C}^{(n+1)\times(n+1)}$. The identity $B^*B=BB^*$ is equivalent to
\eqref{id1}--\eqref{id2} (the identity $x^*x=y^*y$ follows from
\eqref{id1} since $\trace(A^*A-AA^*)=0$, and is therefore
redundant). Clearly, $x$ and $y$ are linearly independent,
otherwise the right-hand side of \eqref{id1} is $0$ and $A$ is
normal.

Conversely, if $x,y\in\mathbb{C}^n$ are linearly independent,
$z\in\mathbb{C}$, and \eqref{id1}--\eqref{id2} hold then the
matrix $B=\begin{bmatrix} A & x\\
y^* & z
\end{bmatrix}\in\mathbb{C}^{(n+1)\times(n+1)}$ is normal. Since
the right-hand side of \eqref{id1} is not $0$, the matrix $A$ is
not normal, thus $\nd(A)=1$.
\end{proof}
\begin{cor}\label{nec_cond}
If $\nd(A)=1$ then $\rank(A^*A-AA^*)=2$.
\end{cor}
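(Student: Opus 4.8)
The plan is to derive this directly from Lemma \ref{nec_suf_cond}. If $\nd(A)=1$, that lemma yields linearly independent vectors $x,y\in\mathbb{C}^n$ and a scalar $z\in\mathbb{C}$ with $A^*A-AA^*=xx^*-yy^*$. The right-hand side is a difference of two rank-one Hermitian matrices, so $\rank(A^*A-AA^*)\le 2$, and it remains only to exclude the values $0$ and $1$.

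Ruling out rank $0$ is immediate: $xx^*=yy^*$ would force, upon applying both sides to $x$, the identity $\|x\|^2 x=(y^*x)y$; since $x\neq 0$ (linear independence), this makes $x$ a scalar multiple of $y$, contradicting linear independence. Ruling out rank $1$ is the one place a small observation is needed. Because $\trace(A^*A-AA^*)=0$, we have $\|x\|^2=\|y\|^2$, so $xx^*-yy^*$ is a \emph{trace-zero} Hermitian matrix; but a Hermitian matrix of rank exactly $1$ has the form $\alpha vv^*$ with $v\neq 0$ and $\alpha\in\mathbb{R}\setminus\{0\}$, hence has nonzero trace $\alpha\|v\|^2$. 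Consequently $xx^*-yy^*$ cannot have rank $1$, and we conclude $\rank(A^*A-AA^*)=2$.

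I do not expect any genuine obstacle here: the statement is a short corollary of Lemma \ref{nec_suf_cond}, with the vanishing-trace identity $\trace(A^*A-AA^*)=0$ carrying the essential content of the rank-$1$ exclusion.
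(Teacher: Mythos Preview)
Your argument is correct and matches the paper's approach: the corollary is stated there without proof, as an immediate consequence of Lemma~\ref{nec_suf_cond} together with the trace identity $\trace(A^*A-AA^*)=0$ (already noted in that lemma's proof). You have simply supplied the details the paper leaves implicit.
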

The rank condition above is easy to check. Its equivalent form
 is that the characteristic polynomial of $A^*A-AA^*$ can be
 written as
\begin{equation}\label{rank_cond}
\det( A^*A-AA^*-\lambda I_n)=(-1)^n\lambda^{n-2}(\lambda^2-d^2),
\end{equation}
with some $d>0$. If this condition is satisfied, one can find the
unit eigenvectors $u_1$ and $u_2$ of the matrix $A^*A-AA^*$
corresponding to its eigenvalues $\lambda_1=d$ and $\lambda_2=-d$,
which are determined uniquely up to a scalar factor. There is more
freedom in a choice of other eigenvectors $u_3$, \ldots, $u_n$,
which form an orthonormal basis of $\Null(A^*A-AA^*)$. Suppose
that such vectors are chosen. Then $U=\begin{bmatrix} u_1 & u_2 &
u_3 & \ldots & u_n \end{bmatrix}\in\mathbb{C}^{n\times n}$ is a
unitary matrix, and the matrix $\widetilde{A}=U^*AU$ satisfies
\begin{equation}\label{id1'}
\widetilde{A}^*\widetilde{A}-\widetilde{A}\widetilde{A}^*=H,
\end{equation}
where
\begin{equation}\label{h}
H=\diag(d, -d, 0,\ldots, 0)\in\mathbb{C}^{n\times n}.
\end{equation}
\begin{lem}\label{other_nec}
If $\widetilde{A}\in\mathbb{C}^{n\times n}$ satisfies \eqref{id1'}
then $\widetilde{A}$ has the form
\begin{equation}\label{A_form}
\widetilde{A}=\begin{bmatrix} a_{11} & a_{12} & u\\
a_{21} & a_{22} & v\\
w^* & q^* & S
\end{bmatrix},
\end{equation}
where $a_{ij}$ ($i, j=1,2$) are scalars,
\begin{equation}\label{adiag} a_{11}=a_{22},
\end{equation}
and $u^*,v^*,w^*,q^*\in\mathbb{C}^{n-2}$ satisfy
\begin{equation}\label{uvqw}
uu^*=qq^*,\quad vv^*=ww^*,\quad uv^*=wq^*,\quad uw^*=vq^*.
\end{equation}
\end{lem}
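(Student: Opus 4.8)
The plan is to extract the statement from a few trace identities — all immediate from cyclicity of the trace together with the hypothesis $\widetilde{A}^*\widetilde{A}-\widetilde{A}\widetilde{A}^*=H$ — combined with a routine block computation. Throughout, note that since $H=\diag(d,-d,0,\dots,0)$ one has $\trace(XH)=d(X_{11}-X_{22})$ for every $X\in\mathbb{C}^{n\times n}$.

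First I would show $\trace(\widetilde{A}^{k}H)=0$ for all $k\ge1$: writing $H=\widetilde{A}^*\widetilde{A}-\widetilde{A}\widetilde{A}^*$ and using cyclicity,
\[
\trace(\widetilde{A}^{k}H)=\trace(\widetilde{A}^{k}\widetilde{A}^*\widetilde{A})-\trace(\widetilde{A}^{k}\widetilde{A}\widetilde{A}^*)=\trace(\widetilde{A}^{k+1}\widetilde{A}^*)-\trace(\widetilde{A}^{k+1}\widetilde{A}^*)=0 .
\]
The cases $k=1,2$ give $a_{11}=\widetilde{A}_{11}=\widetilde{A}_{22}=a_{22}$ (this is \eqref{adiag}) and $(\widetilde{A}^{2})_{11}=(\widetilde{A}^{2})_{22}$. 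Similarly, expanding $\trace\big((\widetilde{A}^*\widetilde{A}+\widetilde{A}\widetilde{A}^*)(\widetilde{A}^*\widetilde{A}-\widetilde{A}\widetilde{A}^*)\big)$ and using $\trace((\widetilde{A}^*\widetilde{A})^2)=\trace((\widetilde{A}\widetilde{A}^*)^2)$ and $\trace(\widetilde{A}^*\widetilde{A}\widetilde{A}\widetilde{A}^*)=\trace(\widetilde{A}\widetilde{A}^*\widetilde{A}^*\widetilde{A})$ (both by cyclicity) shows $\trace\big((\widetilde{A}^*\widetilde{A}+\widetilde{A}\widetilde{A}^*)H\big)=0$, that is, $\big[(\widetilde{A}^*\widetilde{A})_{11}-(\widetilde{A}^*\widetilde{A})_{22}\big]+\big[(\widetilde{A}\widetilde{A}^*)_{11}-(\widetilde{A}\widetilde{A}^*)_{22}\big]=0$. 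Combining this with the $(1,1)$ and $(2,2)$ diagonal entries of the hypothesis, $(\widetilde{A}^*\widetilde{A})_{ii}-(\widetilde{A}\widetilde{A}^*)_{ii}=H_{ii}$ for $i=1,2$, gives $(\widetilde{A}\widetilde{A}^*)_{11}-(\widetilde{A}\widetilde{A}^*)_{22}=-d$.

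Next I would put $\widetilde{A}$ in the block form \eqref{A_form} and write out the relevant entries of $\widetilde{A}^*\widetilde{A}$, $\widetilde{A}\widetilde{A}^*$, $\widetilde{A}^2$ in terms of the blocks — for instance $(\widetilde{A}\widetilde{A}^*)_{11}=|a_{11}|^2+|a_{12}|^2+uu^*$, $(\widetilde{A}^*\widetilde{A})_{11}=|a_{11}|^2+|a_{21}|^2+ww^*$, $(\widetilde{A}\widetilde{A}^*)_{12}=a_{11}\overline{a_{21}}+a_{12}\overline{a_{22}}+uv^*$, $(\widetilde{A}^*\widetilde{A})_{12}=\overline{a_{11}}a_{12}+\overline{a_{21}}a_{22}+wq^*$, $(\widetilde{A}^2)_{11}=a_{11}^2+a_{12}a_{21}+uw^*$, and likewise with subscript $2$. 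Then: the $(1,1)$ and $(2,2)$ entries of $\widetilde{A}^*\widetilde{A}-\widetilde{A}\widetilde{A}^*=H$ read $|a_{21}|^2-|a_{12}|^2+ww^*-uu^*=d$ and $|a_{12}|^2-|a_{21}|^2+qq^*-vv^*=-d$, with sum $uu^*+vv^*=ww^*+qq^*$; expanding $(\widetilde{A}\widetilde{A}^*)_{11}-(\widetilde{A}\widetilde{A}^*)_{22}=-d$ and using $|a_{11}|=|a_{22}|$ gives $|a_{12}|^2-|a_{21}|^2+uu^*-vv^*=-d$, and adding this to the $(1,1)$-entry relation yields $vv^*=ww^*$, hence (by the sum) $uu^*=qq^*$; the $(1,2)$ entry $\overline{a_{11}}a_{12}+\overline{a_{21}}a_{22}+wq^*=a_{11}\overline{a_{21}}+a_{12}\overline{a_{22}}+uv^*$ collapses to $uv^*=wq^*$ after $a_{11}=a_{22}$; and $(\widetilde{A}^2)_{11}=(\widetilde{A}^2)_{22}$ together with $a_{11}^2=a_{22}^2$ gives $uw^*=vq^*$. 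These are exactly the four identities \eqref{uvqw}, so $\widetilde{A}$ has the form \eqref{A_form}.

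The only step needing care is the bookkeeping in the last paragraph — correctly matching the tails of the rows and columns of $\widetilde{A}$ with the eight scalars $uu^*,vv^*,ww^*,qq^*,uv^*,wq^*,uw^*,vq^*$ — but no genuine obstacle remains once the three facts $a_{11}=a_{22}$, $(\widetilde{A}^2)_{11}=(\widetilde{A}^2)_{22}$ and $(\widetilde{A}\widetilde{A}^*)_{11}-(\widetilde{A}\widetilde{A}^*)_{22}=-d$ are in hand. The trace identities deliver these three facts almost for free; by contrast, the naive route of partitioning $\widetilde{A}^*\widetilde{A}-\widetilde{A}\widetilde{A}^*=H$ into its $(1,1)$-, $(1,2)$-, $(2,2)$-block equations and trying to isolate $a_{11}=a_{22}$ and the cross-relations directly would be considerably more involved.
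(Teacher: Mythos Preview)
Your proof is correct and follows essentially the same approach as the paper. The paper uses the trace identities $\trace(\widetilde{A}H)=0$, $\trace(\widetilde{A}^*H\widetilde{A}+\widetilde{A}H\widetilde{A}^*)=0$, $\trace(\widetilde{A}H\widetilde{A})=0$ (your $k=1$, your $\trace((\widetilde{A}^*\widetilde{A}+\widetilde{A}\widetilde{A}^*)H)=0$, and your $k=2$ case, respectively, all equivalent by cyclicity), together with the $(1,2)$ entry and the trace of the $(3,3)$ block of the hypothesis; the only cosmetic difference is that the paper combines its two relations $uu^*-vv^*+ww^*-qq^*=0$ and $uu^*+vv^*=ww^*+qq^*$ directly, whereas you first isolate $(\widetilde{A}\widetilde{A}^*)_{11}-(\widetilde{A}\widetilde{A}^*)_{22}=-d$ and then reach the same conclusions.
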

\begin{proof}
We have
$$\trace (\widetilde{A}H)=\trace(\widetilde{A}\widetilde{A}^*\widetilde{A}-\widetilde{A}^2\widetilde{A}^*)=0,$$
 which implies
\eqref{adiag}. Then, from the equality
$$(\widetilde{A}^*\widetilde{A}-\widetilde{A}\widetilde{A}^*)_{12}=0$$
we obtain that $uv^*=wq^*$. Next, from the observation
\begin{eqnarray*}
\trace(\widetilde{A}^*H\widetilde{A}+\widetilde{A}H\widetilde{A}^*)
&=&
\trace(\widetilde{A}^*(\widetilde{A}^*\widetilde{A}-\widetilde{A}\widetilde{A}^*)\widetilde{A}+\widetilde{A}
(\widetilde{A}^*\widetilde{A}-\widetilde{A}\widetilde{A}^*)\widetilde{A}^*)\\
&=&
\trace(\widetilde{A}^{*2}\widetilde{A}^2-\widetilde{A}^*\widetilde{A}\widetilde{A}^*\widetilde{A}+
\widetilde{A}\widetilde{A}^*\widetilde{A}\widetilde{A}^*-\widetilde{A}^2\widetilde{A}^{*2})=0
\end{eqnarray*} we obtain
\begin{equation}\label{+-+-}
uu^*-vv^*+ww^*-qq^*=0,
\end{equation}
and from the equality
$$\trace((\widetilde{A}^*\widetilde{A}-\widetilde{A}\widetilde{A}^*)_{33})=0$$
we obtain
\begin{equation}\label{++--}
uu^*+vv^*=ww^*+qq^*.
\end{equation}
Combining \eqref{+-+-} and \eqref{++--}, we obtain that
$uu^*=qq^*$ and $vv^*=ww^*$. From the observation
\begin{eqnarray*}
\trace(\widetilde{A}H\widetilde{A}) &=& \trace(\widetilde{A}(\widetilde{A}^*\widetilde{A}-
\widetilde{A}\widetilde{A}^*)\widetilde{A}) \\
&=&
\trace(\widetilde{A}\widetilde{A}^*\widetilde{A}^2-\widetilde{A}^2\widetilde{A}^*\widetilde{A})=0
\end{eqnarray*}
we obtain that $uw^*=vq^*$.
\end{proof}
\begin{lem}\label{xy}
Suppose that
\begin{equation}\label{A_form'}
\widetilde{A}=\begin{bmatrix} a_{11} & a_{12} & u\\
a_{21} & a_{11} & v\\
w^* & q^* & S
\end{bmatrix}\in\mathbb{C}^{n\times n}
\end{equation}
 satisfies \eqref{id1'}
with $H$ given by \eqref{h}. Then the matrix
$\widetilde{B}=\begin{bmatrix} \widetilde{A} & x\\
y^* & z\end{bmatrix}\in\mathbb{C}^{(n+1)\times (n+1)}$ is normal
if and only if
\begin{equation}\label{xy_new}
x=\col(x_1,x_2,0,\ldots,0)\in\mathbb{C}^n,\quad
y=\col(y_1,y_2,0,\ldots,0)\in\mathbb{C}^n,
\end{equation}
\begin{equation}\label{diff}
|x_1|^2-|x_2|^2=d,
\end{equation}
\begin{equation}\label{y_via_x}
y_1=e^{i\theta}\overline{x}_2,\quad y_2=e^{i\theta}\overline{x}_1,
\end{equation} for some $\theta\in\mathbb{R}$,
and the following identities hold:
\begin{eqnarray}
(\overline{a}_{11}-\overline{z})x_1+\overline{a}_{21}x_2 &=& (a_{11}-z)y_1+a_{12}y_2,\label{eq1}\\
\overline{a}_{12}x_1+(\overline{a}_{11}-\overline{z})x_2 &=&
a_{21}y_1+(a_{11}-z)y_2,\label{eq2}\\
 u^*x_1+v^*x_2 &=&
w^*y_1+q^*y_2.\label{eq3}
\end{eqnarray}
\end{lem}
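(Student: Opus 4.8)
The plan is to reduce normality of $\widetilde B$ to the two identities of Lemma \ref{nec_suf_cond} and then analyze them componentwise, exploiting the special structure of $\widetilde A$ and of $H$. Writing $\widetilde B^*\widetilde B=\widetilde B\widetilde B^*$ out blockwise exactly as in the proof of Lemma \ref{nec_suf_cond}, one sees that $\widetilde B$ is normal if and only if $\widetilde A^*\widetilde A-\widetilde A\widetilde A^*=xx^*-yy^*$ and $(\widetilde A-zI_n)^*x=(\widetilde A-zI_n)y$, the scalar identity $x^*x=y^*y$ being automatic since $\trace(\widetilde A^*\widetilde A-\widetilde A\widetilde A^*)=0$. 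Because $\widetilde A$ satisfies \eqref{id1'}, the first identity reads simply $xx^*-yy^*=H$.

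First I would determine all pairs $(x,y)$ with $xx^*-yy^*=H$. Since $H$ has rank $2$, such $x,y$ are linearly independent, and $\spn\{e_1,e_2\}=\range H=\range(xx^*-yy^*)\subseteq\spn\{x,y\}$; comparing dimensions gives $\spn\{x,y\}=\spn\{e_1,e_2\}$, so $x$ and $y$ have the form \eqref{xy_new}. Restricting $xx^*-yy^*=H$ to the upper-left $2\times 2$ block yields the three scalar relations $|x_1|^2-|y_1|^2=d$, $|x_2|^2-|y_2|^2=-d$, and $x_1\overline{x}_2=y_1\overline{y}_2$. The key algebraic step is to extract \eqref{diff} and \eqref{y_via_x} from these: taking moduli in the last relation gives $|x_1|^2|x_2|^2=|y_1|^2|y_2|^2$, while adding the first two gives $|x_1|^2+|x_2|^2=|y_1|^2+|y_2|^2$, so $\{|x_1|^2,|x_2|^2\}$ and $\{|y_1|^2,|y_2|^2\}$ are the (multisets of) roots of the same quadratic; since $d>0$ rules out $|x_1|=|y_1|$, we must have $|x_1|=|y_2|$ and $|x_2|=|y_1|$, which gives \eqref{diff} and, together with $x_1\overline{x}_2=y_1\overline{y}_2$, forces $y_1=e^{i\theta}\overline{x}_2$, $y_2=e^{i\theta}\overline{x}_1$ for a common $\theta$ (the case $x_2=0$ handled directly, noting $x_1\neq 0$ by \eqref{diff}). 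The converse — that \eqref{xy_new}, \eqref{diff}, \eqref{y_via_x} imply $xx^*-yy^*=H$ — is a one-line substitution.

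It then remains to translate the identity $(\widetilde A-zI_n)^*x=(\widetilde A-zI_n)y$, for $x,y$ of the form \eqref{xy_new}, into \eqref{eq1}--\eqref{eq3}. Partitioning $\widetilde A$ in $2+(n-2)$ block form as in \eqref{A_form'} and setting $x=\col(x_1,x_2,0,\ldots,0)$, $y=\col(y_1,y_2,0,\ldots,0)$, I would compute both sides blockwise: the first two components of the identity give \eqref{eq1} and \eqref{eq2} (using $a_{22}=a_{11}$), and the last $n-2$ components give \eqref{eq3}. Assembling these three reductions yields the stated equivalence.

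The only step demanding genuine care is the algebraic passage from the three scalar block-relations to \eqref{diff} and \eqref{y_via_x}, i.e., pinning down simultaneously the moduli and the common phase of $y_1,y_2$ in terms of $x_1,x_2$; everything else is routine bookkeeping with block matrices.
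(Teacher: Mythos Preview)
Your proposal is correct and follows essentially the same route as the paper: reduce normality of $\widetilde B$ to $xx^*-yy^*=H$ and $(\widetilde A-zI_n)^*x=(\widetilde A-zI_n)y$, then analyze each. The only cosmetic difference is that the paper shows $x,y\in\range H$ by evaluating $Hh$ on a vector $h$ orthogonal to $y$ but not to $x$ (and symmetrically), whereas you use the cleaner dimension count $\range H\subseteq\spn\{x,y\}$; your derivation of \eqref{diff}--\eqref{y_via_x} is also more explicit than the paper's, which simply asserts the equivalence.
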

\begin{proof}
 It follows from Lemma \ref{nec_suf_cond} applied to the
 matrix $\widetilde{A}$
as above (see also Lemma \ref{other_nec} which justifies
that $a_{22}=a_{11}$) that $\nd(\widetilde{A})=1$ and $\widetilde{B}=\begin{bmatrix} \widetilde{A} & x\\
y^* & z\end{bmatrix}\in\mathbb{C}^{(n+1)\times (n+1)}$ is a
minimal normal completion of $\widetilde{A}$ if and only if $x$
and $y$ are linearly independent and \eqref{id1}--\eqref{id2} hold
with $A$ replaced by $\widetilde{A}$. Since in this case
$H=xx^*-yy^*$, the vectors $x$ and $y$ have the form
\eqref{xy_new}. Indeed, for any vector $h\in\mathbb{C}^n$ which is
orthogonal to $y$ and not orthogonal to $x$, we have
$$0\neq Hh=xx^*h\in\range(H)\cap \spn(x).$$
Similarly, for any vector $g\in\mathbb{C}^n$ which is orthogonal to
$x$ and not orthogonal to $y$, we have
$$0\neq Hg=-yy^*g\in\range(H)\cap \spn(y),$$
thus both $x$ and $y$ are in $\range(H)$. Next, the identity
$H=xx^*-yy^*$ holds if and only if
$$|x_1|^2-|x_2|^2=d=|y_2|^2-|y_1|^2,\quad
x_1\overline{x}_2=y_1\overline{y}_2,$$  or equivalently,
\eqref{y_via_x} holds with some $\theta\in\mathbb{R}$. Clearly,
\eqref{id2} with $A$ replaced by $\widetilde{A}$, is equivalent to
\eqref{eq1}--\eqref{eq3}.
\end{proof}
\begin{rem}\label{x_adj}
\rm{If $\widetilde{A}$ is as in Lemma \ref{xy} and $\widetilde{B}=\begin{bmatrix} \widetilde{A} & x\\
y^* & z\end{bmatrix}\in\mathbb{C}^{(n+1)\times (n+1)}$ is a
minimal normal completion of $\widetilde{A}$ then so is
\begin{equation*}
\begin{bmatrix} \widetilde{A} & \mu x\\
\overline{\mu}y^* & z\end{bmatrix}=\begin{bmatrix} I_n & 0\\
0 & \overline{\mu}
\end{bmatrix}\begin{bmatrix} \widetilde{A} & x\\
y^* & z\end{bmatrix}\begin{bmatrix} I_n & 0\\
0 & \mu
\end{bmatrix}
\end{equation*}
for any $\mu\in\mathbb{T}$. Therefore, if $x$, $y$ and $z$ are as
in Lemma \ref{xy} then the matrix
\begin{equation*}
\begin{bmatrix} a_{11} & a_{12} & u & e^{-i{\theta}/{2}}x_1\\
a_{21} & a_{11} & v & e^{-i{\theta}/{2}}x_2\\
w^* & q^* & S & 0\\
e^{-i{\theta}/{2}}x_2 & e^{-i{\theta}/{2}}x_1 & 0 & z
\end{bmatrix}
\end{equation*}
is a minimal normal completion of $\widetilde{A}$. This
observation leads to the following statement.}
\end{rem}
\begin{lem}\label{nscond_new}
Suppose that
\begin{equation*}
\widetilde{A}=\begin{bmatrix} a_{11} & a_{12} & u\\
a_{21} & a_{11} & v\\
w^* & q^* & S
\end{bmatrix}\in\mathbb{C}^{n\times n}
\end{equation*}
 satisfies \eqref{id1'}
with $H$ given by \eqref{h}. Then $\nd(A)=1$ if and only if there
exist $x_1,x_2\in\mathbb{C}$ satisfying \eqref{cond} and such that
\begin{equation}\label{eq3'}
 u^*x_1+v^*x_2 =
w^*\overline{x}_2+q^*\overline{x}_1.
\end{equation}
In this case, the matrix
\begin{equation}\label{tilde_B_1}
\begin{bmatrix} a_{11} & a_{12} & u & x_1\\
a_{21} & a_{11} & v & x_2\\
w^* & q^* & S & 0\\
x_2 & x_1 & 0 & z
\end{bmatrix},
\end{equation}
where $z$ is given by \eqref{z_def}, is a minimal normal
completion of $\widetilde{A}$.
\end{lem}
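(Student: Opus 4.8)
The plan is to deduce everything from Lemma~\ref{xy}, using that $\widetilde{A}=U^*AU$ is unitarily similar to $A$, so $\nd(A)=\nd(\widetilde{A})$ (conjugation by $U\oplus 1$ carries normal completions of $A$ to those of $\widetilde{A}$, and back). \emph{Necessity:} if $\nd(\widetilde{A})=1$, a minimal normal completion has the block form $\begin{bmatrix}\widetilde{A}&x\\y^*&z\end{bmatrix}$, so Lemma~\ref{xy} supplies $x_1,x_2$ with $|x_1|^2-|x_2|^2=d$, a real number $\theta$, and $z$ satisfying \eqref{eq1}--\eqref{eq3} with $y_1=e^{i\theta}\overline{x}_2$, $y_2=e^{i\theta}\overline{x}_1$. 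Setting $\widehat{x}_j=e^{-i\theta/2}x_j$ (which still obey \eqref{cond}) and cancelling the phases in \eqref{eq3} yields \eqref{eq3'} for $\widehat{x}_1,\widehat{x}_2$; alternatively one quotes Remark~\ref{x_adj} directly.

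\emph{Sufficiency:} suppose $x_1,x_2$ satisfy \eqref{cond} and \eqref{eq3'}. Put $x=\col(x_1,x_2,0,\ldots,0)$ and $y=\col(\overline{x}_2,\overline{x}_1,0,\ldots,0)$, so that \eqref{xy_new}, \eqref{diff}, and \eqref{y_via_x} (with $\theta=0$) hold and \eqref{eq3} becomes exactly \eqref{eq3'}. Since $|x_1|^2-|x_2|^2=d>0$ we have $|x_1|\neq|x_2|$; in particular $x$ and $y$ are linearly independent, because $x_1=c\overline{x}_2$, $x_2=c\overline{x}_1$ would force $|x_1|=|x_2|$. By Lemma~\ref{xy} it then suffices to produce $z\in\mathbb{C}$ for which \eqref{eq1}--\eqref{eq2} hold: the matrix $\widetilde{B}=\begin{bmatrix}\widetilde{A}&x\\y^*&z\end{bmatrix}$ is then normal, so $\nd(A)=\nd(\widetilde{A})=1$; as $\widetilde{A}$ is not normal (its self-commutator equals $H\neq 0$), $\widetilde{B}$ is a minimal normal completion of $\widetilde{A}$, and it coincides with the matrix~\eqref{tilde_B_1}.

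It remains to solve for $z$. With $y_1=\overline{x}_2$ and $y_2=\overline{x}_1$, the complex conjugate of \eqref{eq1}, after the substitutions $\overline{y}_1=x_2$, $\overline{y}_2=x_1$, is precisely \eqref{eq2}; hence \eqref{eq1} and \eqref{eq2} are equivalent and only \eqref{eq1} must be solved. Rearranged, \eqref{eq1} reads $\overline{x}_2\,z-x_1\,\overline{z}=a_{11}\overline{x}_2+a_{12}\overline{x}_1-\overline{a}_{11}x_1-\overline{a}_{21}x_2$; its left-hand side is an $\mathbb{R}$-linear map of $z$ which is invertible exactly because $|x_1|\neq|x_2|$, so a unique $z$ exists, and substituting the $z$ of \eqref{z_def} verifies that it is this solution. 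I expect this last verification to be the only genuine computation and hence the main (mild) obstacle: it is a short manipulation in which the identity $|x_1|^2-|x_2|^2=d$ is used to clear the factor $1/d$. Everything else is bookkeeping around Lemma~\ref{xy} and the phase normalization of Remark~\ref{x_adj}.
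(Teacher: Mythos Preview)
Your argument is correct and follows the same route as the paper: both proofs reduce everything to Lemma~\ref{xy} and the phase normalization of Remark~\ref{x_adj}, observe that with $y_1=\overline{x}_2$, $y_2=\overline{x}_1$ equation~\eqref{eq3} becomes~\eqref{eq3'} and equations~\eqref{eq1}--\eqref{eq2} become complex conjugates of one another, and then solve~\eqref{eq1} for $z$ to obtain~\eqref{z_def}. Your version simply spells out the details (linear independence of $x,y$, invertibility of $z\mapsto\overline{x}_2 z-x_1\overline{z}$) that the paper leaves implicit.
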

\begin{proof}
The statement follows from Lemma \ref{xy}, Remark \ref{x_adj}, and
the observation that, if $y_1=\overline{x}_2$ and $y_2=\overline{x}_1$,
then \eqref{eq3} becomes \eqref{eq3'}. Solving \eqref{eq1} or
\eqref{eq2} (which are equivalent in this case) for $z$ gives
\eqref{z_def}.
\end{proof}
\begin{proof}[Proof of Theorem \ref{thm1}]
(i) By Corollary \ref{nec_cond} the rank condition,
$\rank(A^*A-AA^*)=2$, or equivalently, \eqref{rank_cond} with some
$d>0$, is necessary for $A$ to have normal defect one, thus we can
assume that this condition holds. Let
  $u_1$ and $u_2$ be the unit
eigenvectors of the matrix $A^*A-AA^*$ corresponding to its
eigenvalues $\lambda_1=d$ and $\lambda_2=-d$, and let $u_3$,
\ldots, $u_n$ be an arbitrary orthonormal basis of
$\Null(A^*A-AA^*)$. Define a unitary matrix $U=\begin{bmatrix} u_1
& \ldots & u_n\end{bmatrix}\in\mathbb{C}^{n\times n}$ and an
isometry $U^{\prime}=\begin{bmatrix} u_3 & \ldots & u_n
\end{bmatrix}\in\mathbb{C}^{n\times (n-2)}$. Then
\begin{equation}\label{U'}
U^{\prime}U^{\prime *}=P,
\end{equation}
and the matrix $\widetilde{A}=U^*AU$ has the form \eqref{A_form'},
where the scalars $a_{ij}$ are defined by \eqref{a's},
\begin{equation}\label{vects} u=u_1^*AU^\prime,\quad
v=u_2^*AU^\prime,\quad w^*=U^{\prime *}Au_1,\quad q^*=U^{\prime
*}Au_2.
\end{equation}
According to Lemma \ref{nscond_new}, $\nd(\widetilde{A})=1$ (and
hence $\nd(A)=1$) if and only if \eqref{eq3'} is satisfied with
some $x_1,x_2\in\mathbb{C}$ subject to \eqref{cond}. By
\eqref{vects}, equation \eqref{eq3'} can be written as
\begin{equation*}
(U^{\prime *}A^*u_1)x_1+(U^{\prime *}A^*u_2)x_2=(U^{\prime
*}Au_1)\overline{x}_2+(U^{\prime *}Au_2)\overline{x}_1.
\end{equation*}
Multiplying on the left by $U^\prime $ and taking into account
that $U^{\prime}\colon\mathbb{C}^{n-2}\to\mathbb{C}^n$ is an
isometry satisfying \eqref{U'}, we obtain an equivalent equation
\begin{equation}\label{main_id'}
\mathbf{u}^*x_1+\mathbf{v}^*x_2=\mathbf{w}^*\overline{x}_2+\mathbf{q}^*\overline{x}_1,
\end{equation}
with the vectors
\begin{equation}\label{hat_vects}
\mathbf{u}^*=PA^*u_1,\quad \mathbf{v}^*=PA^*u_2, \quad
\mathbf{w}^*=PAu_1, \quad \mathbf{q}^*=PAu_2.
\end{equation}
Note that these vectors are defined independently of the choice of
$u_3$, \ldots, $u_n$.  Since \eqref{main_id'} is equivalent to
\eqref{eq}, this proves part (i) of this theorem.

(ii) If $\nd(A)=1$ and $\widetilde{A}$ is defined as in part (i), then
$\nd(\widetilde{A})=1$. By Lemma \ref{nscond_new}, for any
$x_1,x_2\in\mathbb{C}$ satisfying \eqref{eq} (or equivalently,
\eqref{eq3'}) and \eqref{cond}, the matrix in \eqref{tilde_B_1} is a
minimal normal completion of $\widetilde{A}$. By Remark \ref{x_adj},
so is
\begin{equation}\label{tilde_B}
\widetilde{B}=\begin{bmatrix} a_{11} & a_{12} & u & \mu x_1\\
a_{21} & a_{11} & v & \mu x_2\\
w^* & q^* & S & 0\\
\overline{\mu} x_2 & \overline{\mu} x_1 & 0 & z
\end{bmatrix},
\end{equation}
with an arbitrary $\mu\in\mathbb{T}$. By Lemma \ref{xy}, all
minimal normal completions of $\widetilde{A}$ arise in this way.
Since $\widetilde{A}=U^*AU$ and $U$ is unitary, all minimal normal
completions of $A$ have the form
\begin{equation*}
B=\begin{bmatrix} U & 0\\
0 & 1
\end{bmatrix}\widetilde{B}\begin{bmatrix} U^* & 0\\
0 & 1
\end{bmatrix}=\begin{bmatrix} A & \mu(x_1u_1+x_2u_2)\\
\overline{\mu}({x}_2u_1^*+{x}_1u_2^*) & z
\end{bmatrix},
\end{equation*}
where $\widetilde{B}$ is any of the matrices defined in \eqref{tilde_B}.
This proves part (ii).
\end{proof}
\begin{proof}[Proof of Theorem \ref{normalext1}]
If $\nd(A)=1$ then there exists a normal matrix \begin{equation*}
B=\begin{bmatrix} A & x
\\ y^* & z
\end{bmatrix}\in\mathbb{C}^{(n+1)\times (n+1)}.\end{equation*}
By Lemma \ref{nec_suf_cond}, \eqref{xy_eq} holds and
$A^*x-\overline{z}x=Ay-zy$, i.e., $x,y, A^*x, Ay$ are linearly
dependent.

Conversely, suppose that $A^*A-AA^*=xx^*-yy^*$ is satisfied with
some linearly independent vectors $x,y\in\mathbb{C}^n$, and the
vectors $x,y, A^*x, Ay$ are linearly dependent. Clearly, in this case
$\rank(A^*A-AA^*)=2$. Choose an orthonormal eigenbasis
$u_1,\ldots,u_n$ of the matrix $A^*A-AA^*$ and define a unitary matrix
$U=\begin{bmatrix} u_1 & \ldots & u_n\end{bmatrix}$ as in the proof
of Theorem \ref{thm1}. By Lemma \ref{other_nec}, the matrix
$\widetilde{A}=U^*AU$ has the form \eqref{A_form'}. Define new
linearly independent vectors $\widetilde{x}=U^*x$,
$\widetilde{y}=U^*y$.  Then
$\widetilde{A}^*\widetilde{A}-\widetilde{A}\widetilde{A}^*=\widetilde{x}\widetilde{x}^*-\widetilde{y}\widetilde{y}^*$.
As in the proof of Lemma \ref{xy}, we conclude that
\eqref{xy_new}--\eqref{y_via_x} hold with $\widetilde{x}$ and
$\widetilde{y}$ in the place of $x$ and $y$. Since $x,y, A^*x, Ay$ are
linearly dependent, so are $\widetilde{x},\widetilde{y},
\widetilde{A}^*\widetilde{x}, \widetilde{A}\widetilde{y}$, i.e., the matrix
$$\begin{bmatrix}
\widetilde{x} & \widetilde{y} & \widetilde{A}^*\widetilde{x} &
\widetilde{A}\widetilde{y}
\end{bmatrix}=\begin{bmatrix}
\widetilde{x}_1 & \widetilde{y}_1 &
\overline{a}_{11}\widetilde{x}_1+\overline{a}_{21}\widetilde{x}_2
&
a_{11}\overline{y}_1+a_{12}\overline{y}_2\\
\widetilde{x}_2 & \widetilde{y}_2 &
\overline{a}_{12}\widetilde{x}_1+\overline{a}_{11}\widetilde{x}_2
&
a_{21}\widetilde{y}_1+a_{11}\widetilde{y}_2\\
0 & 0 & u^*\widetilde{x}_1+v^*\widetilde{x}_2 &
w^*\widetilde{y}_1+q^*\widetilde{y}_2
\end{bmatrix}\in\mathbb{C}^{n\times 4}$$
has rank less than $4$. Therefore
$u^*\widetilde{x}_1+v^*\widetilde{x}_2$ and
$w^*\widetilde{y}_1+q^*\widetilde{y}_2$ are linearly dependent.
The identities
$\widetilde{y_1}=e^{i\theta}\overline{\widetilde{x}}_2$ and
$\widetilde{y_2}=e^{i\theta}\overline{\widetilde{x}}_1$, together
with the first three identities in \eqref{uvqw}, imply that there
is $\phi\in\mathbb{R}$ such that
$$u^*\widetilde{x}_1+v^*\widetilde{x}_2=(w^*\widetilde{y}_1+q^*\widetilde{y}_2)e^{i\phi}=
(w^*\overline{\widetilde{x}}_2+q^*\overline{\widetilde{x}}_1)e^{i(\theta+\phi)}.$$
Putting $x_1^0=\widetilde{x}_1e^{-i\frac{\theta+\phi}{2}}$ and
$x_2^0=\widetilde{x}_2e^{-i\frac{\theta+\phi}{2}}$, we obtain
$$u^*x_1^0+v^*x_2^0=w^*\overline{x}_2^0+q^*\overline{x_1^0}.$$
By Lemma \ref{nscond_new}, $\nd(\widetilde{A})=1$, and therefore
$\nd(A)=1$.

The last statement of the theorem is obtained as follows. Let
$$x^0=\col(x_1^0,x_2^0,0,\ldots,0),\quad
y^0=\col(\overline{x}_2^0,\overline{x}_1^0,0,\ldots,0),$$ and
\begin{equation*}
z=a_{11}-\frac{1}{d}\left(x_2^0(a_{12}\overline{x}_1^0-\overline{a}_{21}x_2^0)+
x_1^0(\overline{a}_{12}x_1^0-a_{21}\overline{x}_2^0)\right)
\end{equation*}
(see \eqref{z_def}). By Lemma \ref{nscond_new}, the
matrix $\widetilde{B}=\begin{bmatrix} \widetilde{A} & x^0\\
y^{0*} & z\end{bmatrix}$ is a minimal normal completion of
$\widetilde{A}$. Then
$$B=\begin{bmatrix}
U & 0\\
0 & e^{i\frac{\phi-\theta}{2}}
\end{bmatrix}\begin{bmatrix} \widetilde{A} & x^0\\
y^{0*} & z\end{bmatrix}\begin{bmatrix}
U^* & 0\\
0 & e^{i\frac{\theta-\phi}{2}}
\end{bmatrix}=\begin{bmatrix}
A & e^{-i\phi} x\\
y^* & z\end{bmatrix}$$ is a minimal normal completion of $A$,
i.e., we obtain \eqref{nm} with $\nu=e^{-i\phi}$.
\end{proof}
Applying Theorem \ref{normalext1} to $3\times 3$ matrices, we
obtain the following.
\begin{cor}\label{n=3}
A matrix $A\in\mathbb{C}^{3\times 3}$ has normal defect one if and
only if $\rank(A^*A-AA^*)=2$. Thus, for any $3\times 3$ matrix
$A$, one has
$$\nd(A)=\max\{i_+(A^*A-AA^*),i_-(A^*A-AA^*)\},$$ i.e., \eqref{lower_bound}  in this case
turns into the equality. \end{cor}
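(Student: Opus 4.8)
\textit{Proof proposal.}
The plan is to read this off from Theorem \ref{normalext1} together with the elementary bounds on $\nd(A)$ recorded in Section~\ref{sec:intro}. The ``only if'' direction is already contained in Corollary \ref{nec_cond}: if $\nd(A)=1$ then necessarily $\rank(A^*A-AA^*)=2$. So the whole content is the converse, and here the point is that in dimension three the second hypothesis of Theorem \ref{normalext1} comes for free.

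First I would dispose of the ``if'' direction. Assume $\rank(A^*A-AA^*)=2$. Since $A^*A-AA^*$ is Hermitian with $\trace(A^*A-AA^*)=0$, this rank-two condition forces its eigenvalues to be $d,-d,0$ with some $d>0$ (equivalently \eqref{rank_cond} holds); writing $x=\sqrt{d}\,u_1$ and $y=\sqrt{d}\,u_2$ for the corresponding unit eigenvectors, we get linearly independent $x,y\in\mathbb{C}^3$ with $A^*A-AA^*=xx^*-yy^*$. The four vectors $x,y,A^*x,Ay$ then lie in $\mathbb{C}^3$, so they are automatically linearly dependent. By Theorem \ref{normalext1}, $\nd(A)=1$ (the matrix $A$ is not normal because $A^*A-AA^*\neq 0$). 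This proves the equivalence.

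For the displayed equality $\nd(A)=\max\{i_+(A^*A-AA^*),i_-(A^*A-AA^*)\}$, I would argue by cases on $r:=\rank(A^*A-AA^*)\in\{0,1,2,3\}$, noting that $r=1$ is impossible for a $3\times3$ Hermitian matrix of trace zero (a rank-one Hermitian matrix has a single nonzero eigenvalue, which cannot be zero). If $r=0$ then $A$ is normal, $\nd(A)=0$, and $i_+=i_-=0$. If $r=2$ then by the equivalence just proved $\nd(A)=1$, while the eigenvalue pattern $d,-d,0$ gives $i_+=i_-=1$. If $r=3$ the eigenvalues are all nonzero and sum to zero, so $\{i_+,i_-\}=\{1,2\}$; then $\nd(A)\ge 2$ by \eqref{lower_bound}, while $\nd(A)\le\ud(A)\le n-1=2$, whence $\nd(A)=2=\max\{i_+,i_-\}$. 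In every case the asserted equality holds.

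I do not anticipate a real obstacle here: the only step requiring a moment's thought is recognizing that the linear-dependence hypothesis in part (iv) of Theorem \ref{normalext1} is vacuous once $x,y,A^*x,Ay$ are four vectors in a three-dimensional space, and, for the final formula, ruling out $r=1$ and invoking the bound $\ud(A)\le n-1$ to pin down the case $r=3$.
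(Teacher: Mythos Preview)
Your proof is correct and follows essentially the same route as the paper: necessity via Corollary~\ref{nec_cond}, sufficiency via Theorem~\ref{normalext1} using that four vectors in $\mathbb{C}^3$ are automatically linearly dependent, and the displayed equality by the same case split on $\rank(A^*A-AA^*)$ (with $r=1$ ruled out by the trace condition and $r=3$ pinned down by \eqref{lower_bound} together with $\ud(A)\le n-1$). The only cosmetic difference is that you spell out the construction $x=\sqrt{d}\,u_1$, $y=\sqrt{d}\,u_2$ explicitly, which the paper leaves implicit (cf.\ Remark~\ref{eig}).
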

\begin{proof}
The necessity of the rank condition has been established in Corollary
\ref{nec_cond}. The sufficiency follows from Theorem
\ref{normalext1}, since any four vectors in $\mathbb{C}^3$ are linearly
dependent. The second statement is obvious in the case where
$A^*A-AA^*=0$. It easily follows from the first statement in the case
when $\rank(A^*A-AA^*)=2$. If $\rank(A^*A-AA^*)=3$ then
$\nd(A)=\ud(A)=2$, and since $A^*A-AA^*$ has three nonzero
eigenvalues (counting multiplicities), we also have
$$\max\{i_+(A^*A-AA^*),i_-(A^*A-AA^*)\}=2.$$ This covers all
the possibilities, since $\rank(A^*A-AA^*)\neq 1$ due to the
identity $\trace(A^*A-AA^*)=0$.
\end{proof}

In the following example, we show that for $n>3$ the rank condition
(\ref{rank}) is not sufficient for $\nd(A)=1$.
\begin{ex}\label{rc_ns}
\rm{Let $$A=\begin{bmatrix} 0 & 0 & 1 & -i\\
2 & 0 & 0 & 0\\
0 & 1 & \frac{1}{\sqrt{2}} & \frac{i}{\sqrt{2}}\\
0 & -i & \frac{i}{\sqrt{2}} & -\frac{1}{\sqrt{2}}
\end{bmatrix}.$$
Note that $A\ (=\widetilde{A})$ is already of the form
\eqref{A_form'}. We have
$$A^*A-AA^*=\begin{bmatrix} 2 & 0 & 0 & 0\\
0 & -2 & 0 & 0\\
0 & 0 & 0 & 0\\
0 & 0 & 0 & 0
\end{bmatrix}.$$
Equation \eqref{eq3'} in this case takes the form
$$x_1\begin{bmatrix} 1\\
i\end{bmatrix}=\overline{x}_1\begin{bmatrix} 1\\
-i\end{bmatrix},$$ and it has no solutions with
$|x_1|^2-|x_2|^2=2>0$. Thus, by Lemma \ref{nscond_new},
$\nd(A)>1$.}
\end{ex}

\begin{rem}\label{eig}
\rm{If $\rank(A^*A-AA^*)=2$ and $u_1,u_2\in\mathbb{C}^n$ are the
unit eigenvectors of $A^*A-AA^*$ corresponding to the eigenvalues
$\lambda_1=d (>0)$ and $\lambda_2=-d$, then the vectors
$x=\sqrt{d}u_1$ and $y=\sqrt{d}u_2$ satisfy \eqref{xy_eq}. Indeed,
$u_1$ and $u_2$ are orthogonal, hence linearly independent, and
$\spn(u_1,u_2)=\range(A^*A-AA^*)$. For arbitrary $a,b\in\mathbb{C}$,
we have $$(A^*A-AA^*)(a u_1+b
u_2)=d(au_1-bu_2)=d(u_1u_1^*-u_2u_2^*)(au_1+bu_2),$$ therefore
$A^*A-AA^*=d(u_1u_1^*-u_2u_2^*)$.

However, as the following example shows, these $x$ and $y$ do not
necessarily satisfy the conditions of Theorem \ref{normalext1}. }
\end{rem}
\begin{ex}\label{ex:eig}
\rm{Let $$A=\begin{bmatrix} 0 & 0 & \frac{1}{\sqrt{2}} &
\frac{i}{\sqrt{2}}\\
0 & 0 & 1 & i\\
1 & \frac{1}{\sqrt{2}} & \frac{\sqrt{3}}{2} &
-\frac{\sqrt{3}}{2}i\\
i & \frac{i}{\sqrt{2}} & -\frac{\sqrt{3}}{2}i &
-\frac{\sqrt{3}}{2}
\end{bmatrix}.$$
Then
$$A^*A-AA^*=\begin{bmatrix} 1 & 0 & 0 & 0\\
0 & -1 & 0 & 0\\
0 & 0 & 0 & 0\\
0 & 0 & 0 & 0
\end{bmatrix}=e_1e_1^*-e_2e_2^*,$$
where $e_1$ and $e_2$ are standard basis vectors, which are the
eigenvectors of the matrix $A^*A-AA^*$ corresponding to its
eigenvalues $\lambda_1=1$ and $\lambda_2=-1$. However, the vectors
$x=e_1$, $y=e_2$,
$A^*x=\col(0,0,\frac{1}{\sqrt{2}},-\frac{i}{\sqrt{2}})$,
$Ay=\col(0,0,\frac{1}{\sqrt{2}},\frac{i}{\sqrt{2}})$,
 are linearly independent.

 On the other hand, we have
$\nd(A)=\ud(A)=1$: one of minimal normal completions of $A$ (in
fact, its minimal unitary completion) is
$$B=\begin{bmatrix} 0 & 0 & \frac{1}{\sqrt{2}} &
\frac{i}{\sqrt{2}} & \sqrt{2}\\
0 & 0 & 1 & i & -1\\
1 & \frac{1}{\sqrt{2}} & \frac{\sqrt{3}}{2} &
-\frac{\sqrt{3}}{2}i & 0\\
i & \frac{i}{\sqrt{2}} & -\frac{\sqrt{3}}{2}i &
-\frac{\sqrt{3}}{2} & 0\\
-1 & \sqrt{2} & 0 & 0 & 0
\end{bmatrix}.$$ }
\end{ex}

 We are now able
to describe a procedure to determine whether $\nd(A)=1$ for a
given matrix $A\in\mathbb{C}^{n\times n}$, i.e., whether equation
\eqref{eq} in Theorem \ref{thm1} has a solution pair
$x_1,x_2\in\mathbb{C}$ satisfying \eqref{cond}. Moreover, this
procedure allows to find all such solutions, and then, applying
part (ii) of Theorem \ref{thm1}, all minimal normal completions of
an arbitrary matrix $A$ with $\nd(A)=1$.
\medskip

\centerline{\textbf{The procedure}}

\medskip

\noindent\textbf{Begin}
\medskip

\textbf{Step 1.} Verify the condition $\rank(A^*A-AA^*)=2$, or
equivalently, \eqref{rank_cond}. If it is satisfied -- go to Step
2. Otherwise, stop: $\nd(A)>1$.

\textbf{Step 2.} Rewrite \eqref{eq} in the form \eqref{main_id'},
 where $\mathbf{u}$, $\mathbf{v}$, $\mathbf{w}$, $\mathbf{q}$ are
defined in \eqref{hat_vects} (see Theorem \ref{thm1} for the
definition of $P$, $u_1$, and $u_2$). Let
$$\mathbf{u}=\mathbf{u}_R+i\mathbf{u}_I,\
\mathbf{v}=\mathbf{v}_R+i\mathbf{v}_I,\
\mathbf{q}=\mathbf{q}_R+i\mathbf{q}_I,\
\mathbf{w}=\mathbf{w}_R+i\mathbf{w}_I,
$$
where
$\mathbf{u}^T_R,\mathbf{u}^T_I,\mathbf{v}^T_R,\mathbf{v}^T_I,\mathbf{q}^T_R,\mathbf{q}^T_I,\mathbf{w}^T_R,
\mathbf{w}^T_I\in\mathbb{R}^n$, and let
$$x_1=x_{R1}+ix_{I1},\quad x_2=x_{R2}+ix_{I2},$$
where $x_{R1}, x_{I1}, x_{R2}, x_{I2}\in\mathbb{R}$. Then
\eqref{main_id'} becomes
\begin{equation}\label{short}
\begin{bmatrix}
\mathbf{u}^T_R-\mathbf{q}^T_R & \mathbf{u}^T_I+\mathbf{q}^T_I &
\mathbf{v}^T_R-\mathbf{w}^T_R &
\mathbf{v}^T_I+\mathbf{w}^T_I\\
-\mathbf{u}^T_I+\mathbf{q}^T_I & \mathbf{u}^T_R+\mathbf{q}^T_R &
-\mathbf{v}^T_I+\mathbf{w}^T_I & \mathbf{v}^T_R+\mathbf{w}^T_R
\end{bmatrix}\begin{bmatrix}
x_{R1}\\
x_{I1}\\
x_{R2}\\
x_{I2}
\end{bmatrix}=0.
\end{equation}
Denote $$\mathbf{Q}=\begin{bmatrix} \mathbf{u}^T_R-\mathbf{q}^T_R
& \mathbf{u}^T_I+\mathbf{q}^T_I & \mathbf{v}^T_R-\mathbf{w}^T_R &
\mathbf{v}^T_I+\mathbf{w}^T_I\\
-\mathbf{u}^T_I+\mathbf{q}^T_I & \mathbf{u}^T_R+\mathbf{q}^T_R &
-\mathbf{v}^T_I+\mathbf{w}^T_I & \mathbf{v}^T_R+\mathbf{w}^T_R
\end{bmatrix}.$$
Find $m=\rank (\mathbf{Q})$.

\textbf{Step 3.} Depending on $m$, consider the following cases.
\begin{itemize}
    \item[\textbf{(1)}] $m=0$. In this case,
    $\mathbf{u}=\mathbf{v}=\mathbf{q}=\mathbf{w}=0$,
     and then \eqref{main_id'}
    holds with any $x_1,x_2\in\mathbb{C}$ such that
    $|x_1|^2-|x_2|^2=d$. Therefore,
     $\nd(A)=1$. Go to Step 4.

    \item[\textbf{(2)}] $1\le m\le 3$. In this case, \eqref{short} has
    nontrivial solutions. Let $\mathbf{F}\in\mathbb{R}^{4\times (4-m)}$ be a
   matrix whose columns are linearly independent solutions of
   \eqref{short}. Then $x=\col(x_{R1}, x_{I1},x_{R2},x_{I2}) \in\mathbb{R}^4$
   is a solution of \eqref{short} if and only if $x=\mathbf{F}h$,
   with $h\in\mathbb{R}^{4-m}$. Setting $\mathbf{F}=\begin{bmatrix}
   \mathbf{F}_1\\
   \mathbf{F}_2
   \end{bmatrix}$ where $\mathbf{F}_1,\mathbf{F}_2\in\mathbb{R}^{2\times (4-m)}$,  write the
   condition $|x_1|^2-|x_2|^2>0$ as
   $$h^T(\mathbf{F}_1^T\mathbf{F}_1-\mathbf{F}_2^T\mathbf{F}_2)h>0.$$
Therefore, $\nd(A)=1$ if and only if the matrix
$\mathbf{K}=\mathbf{F}_1^T\mathbf{F}_1-\mathbf{F}_2^T\mathbf{F}_2$
has at least one positive eigenvalue. If this is not the case,
stop. Otherwise, for any $h$ in the level hyper-surface
$h^T\mathbf{K}h=d$,  define $$\begin{bmatrix} x_{R1}\\
x_{I1}
\end{bmatrix}=\mathbf{F}_1h,\quad \begin{bmatrix} x_{R2}\\
x_{I2}
\end{bmatrix}=\mathbf{F}_2h,
$$ and thus obtain $x_1=x_{R1}+ix_{I1}$, $x_2=x_{R2}+ix_{I2}$
satisfying \eqref{main_id'} and such that $|x_1|^2-|x_2|^2=d$. Go
to Step 4.

    \item[\textbf{(3)}] $m=4$. In this case, \eqref{short}, and hence
   \eqref{main_id'}, has no nontrivial solutions, and
    $\nd(A)>1$. Stop.
\end{itemize}

\textbf{Step 4.} For each pair $x_1,x_2\in\mathbb{C}$ obtained in
Step 3, find minimal normal completions of $A$ as described by
\eqref{ncompl}--\eqref{a's}.

\medskip

\textbf{End}

\medskip

\begin{rem}\label{m=1}
\rm{If $m=1$ then $\mathbf{K}$ always has a positive eigenvalue
and $\nd(A)=1$. Indeed, in this case $\mathbf{F}$ is a full rank
matrix of size $4\times 3$. Since $\Null (\mathbf{F}_2)\neq \{
0\}$ and $\Null (\mathbf{F})=\{ 0\}$, for a nonzero vector
$h\in\Null (\mathbf{F}_2)$ we have
$$h^T\mathbf{K}h=h^T\mathbf{F}_1^T\mathbf{F}_1h>0.$$
}
\end{rem}

\begin{ex}\label{shift}
\rm{Consider the $4\times 4$ shift matrix
$$A=\begin{bmatrix}
0 & 1 & 0 & 0 \\
0 & 0 & 1 & 0\\
0 & 0 & 0 & 1\\
0 & 0 & 0 & 0
\end{bmatrix}.$$
Being a single Jordan cell, $A$ is unitarily irreducible. Since
$$A^*A-AA^*=\begin{bmatrix}
-1 & 0 & 0 & 0 \\
0 & 0 & 0 & 0\\
0 & 0 & 0 & 0\\
0 & 0 & 0 & 1
\end{bmatrix},$$
$A$ is not normal, and the rank condition is satisfied. Clearly,
$\nd(A)=\ud(A)=1$, and a minimal unitary (and thus normal)
completion of $A$ is given by
\begin{equation}\label{uc_shift}\begin{bmatrix}
0 & 1 & 0 & 0 & 0\\
0 & 0 & 1 & 0 & 0\\
0 & 0 & 0 & 1 & 0\\
0 & 0 & 0 & 0 & \zeta\\
\rho & 0 & 0 & 0 & 0
\end{bmatrix},
\end{equation}
with any $\zeta,\rho\in \mathbb{T}$. It is less obvious that any
minimal normal completion of $A$ has this form, as shown in
\cite[Proposition 2]{KW}. We will give here an alternative explanation.
Observe that $d=1$, and the nonzero eigenvalues of $A^*A-AA^*$ are
$\lambda_1=1$, $\lambda_2=-1$. Let $e_k$, $k=1,\ldots,4$, be the
standard basis vectors in $\mathbb{C}^4$. Then $u_1=e_4$ and
$u_2=e_1$ are the eigenvectors of $A^*A-AA^*$ corresponding to
$\lambda_1=1$ and $\lambda_2=-1$. We have
$$P=I_4-e_4e_4^*-e_1e_1^*=\begin{bmatrix}
0 & 0 & 0 & 0 \\
0 & 1 & 0 & 0\\
0 & 0 & 1 & 0\\
0 & 0 & 0 & 0
\end{bmatrix}.$$
Since
$$\mathbf{u}^*=PA^*u_1=0,\  \mathbf{v}^*=PA^*u_2=e_2,\ \mathbf{w}^*=PAu_1=e_3,\ \mathbf{q}^*=PAu_2=0,$$ we have
$$\mathbf{u}_R^T=\mathbf{u}_I^T=\mathbf{q}_R^T=\mathbf{q}_I^T=\mathbf{v}_I^T=\mathbf{w}_I^T=0,\
\mathbf{v}_R^T=e_2,\ \mathbf{w}_R^T=e_3,$$ and then
$$\mathbf{Q}=\begin{bmatrix}
0 & 0 & 0 & 0 \\
0 & 0 & 1 & 0\\
0 & 0 & -1 & 0\\
0 & 0 & 0 & 0\\
0 & 0 & 0 & 0 \\
0 & 0 & 0 & 1\\
0 & 0 & 0 & 1\\
0 & 0 & 0 & 0
\end{bmatrix}.$$
Since $m=\rank(\mathbf{Q})=2$, all the solutions
$x=\col(x_{R1},x_{I1},x_{R2},x_{R2})\in\mathbb{R}^4$ of the
equation $\mathbf{Q}x=0$ are given by $x=\mathbf{F}h$ where
$$\mathbf{F}=\begin{bmatrix}
\mathbf{F}_1\\
\mathbf{F}_2
\end{bmatrix}=\begin{bmatrix}
1 & 0\\
0 & 1\\
0 & 0\\
0 & 0
\end{bmatrix},$$
and $h\in\mathbb{R}^2$ is arbitrary. The matrix
$\mathbf{K}=\mathbf{F}_1^T\mathbf{F}_1-\mathbf{F}_2^T\mathbf{F}_2=I_2$
is positive definite, and therefore $\nd(A)=1$. Define, for an
arbitrary $h\in\mathbb{R}^2$ such that $h_1^2+h_2^2=1$,
$$\begin{bmatrix} x_{R1}\\
x_{I1}
\end{bmatrix}=\mathbf{F}_1h=h,\quad \begin{bmatrix} x_{R2}\\
x_{I2}
\end{bmatrix}=\mathbf{F}_2h=0,
$$ and we obtain $x_1=x_{R1}+ix_{I1}=h_1+ih_2$ and
$x_2=x_{R2}+ix_{I2}=0$ satisfying \eqref{main_id'} and such that
$|x_1|^2-|x_2|^2=1$. Then we calculate
\begin{eqnarray*}
a_{11}=u_1^*Au_1=u_2^*Au_2=0,\quad a_{12}=u_1^*Au_2=0,\quad
a_{21}=u_2^*Au_1=0,\\
z=a_{11}-\frac{1}{d}\left(x_2(a_{12}\overline{x}_1-\overline{a}_{21}x_2)+
x_1(\overline{a}_{12}x_1-a_{21}\overline{x}_2)\right)=0.
\end{eqnarray*}
Thus, all minimal normal completions of $A$ have the form
$$\begin{bmatrix}
A & \mu x_1u_1\\
\overline{\mu}x_1u_2^* & 0
\end{bmatrix}=\begin{bmatrix}
0 & 1 & 0 & 0 & 0\\
0 & 0 & 1 & 0 & 0\\
0 & 0 & 0 & 1 & 0\\
0 & 0 & 0 & 0 & \zeta\\
\rho & 0 & 0 & 0 & 0
\end{bmatrix},$$
where $\zeta=\mu x_1$ and $\rho=\overline{\mu}x_1$, with some
$\mu\in\mathbb{T}$, i.e., $\zeta$ and $\rho$ are arbitrary complex
numbers of modulus one.  We conclude that all minimal normal
completions of $A$ have the form \eqref{uc_shift}, i.e., are
minimal unitary completions of $A$.

We note that a similar argument can be applied to weighted shift
matrices of any size $n\ge 4$, with all the weights of modulus one,
thus recovering the result of Proposition 2 in \cite{KW}. We also note
that, as was mentioned in \cite{KW}, for $n=2$ or 3 there exist
non-unitary minimal normal completions of such weighted shift
matrices. Our procedure gives the full description of these
completions $B$. Namely, for $n=2$
$$B=\begin{bmatrix}
0 & 1 & \mu x_2\\
0 & 0 & \mu x_1\\
\overline{\mu} x_1 & \overline{\mu} x_2 & x_2^2+x_1\overline{x}_2
\end{bmatrix},$$
where $\mu\in\mathbb{T}$ and $x_1,x_2\in\mathbb{C}\colon
|x_1|^2-|x_2|^2=1$ are arbitrary, and for $n=3$
$$B=\begin{bmatrix}
0 & 1 & 0 & \mu x_2\\
0 & 0 & 1 & 0\\
0 & 0 & 0 & \mu x_1\\
\overline{\mu} x_1 & 0 & \overline{\mu} x_2 & 0
\end{bmatrix},$$
where $\mu\in\mathbb{T}$ and $x_1\in\mathbb{C},
x_2\in\mathbb{R}\colon |x_1|^2-x_2^2=1$ are arbitrary.}
\end{ex}

Later on, we will present more examples of application of this
method, see Examples \eqref{ex:a} and \eqref{ex:b}.

\subsection{The generic case.}\label{finer}
The procedure described in Section \ref{shorter}, which is based on
part (i) of Theorem \ref{thm1}, allows to check whether a given matrix
$A\in\mathbb{C}^{n\times n}$ has normal defect one, and if this is the
case --- to solve the system of equations \eqref{eq}--\eqref{cond}. Part
(ii) of Theorem \ref{thm1} describes all the minimal normal
completions of $A$.
 That procedure verifies first the necessary rank condition,
and then uses only the two nonzero eigenvalues, $\lambda_1=d$ and
$\lambda_2=-d$, and the two corresponding unit eigenvectors, $u_1$
and $u_2$, of $A^*A-AA^*$. The vector equation \eqref{short}
appearing in that procedure is equivalent to a system of $2n$ real
scalar linear equations with $4$ unknowns.

In this section, we show how the procedure in Section \ref{shorter}
can be refined by using a special choice of the eigenbasis for the
matrix $A^*A-AA^*$, i.e., a special construction of orthonormal
eigenvectors $u_3$, \ldots, $u_n$ corresponding to the zero
eigenvalue. This additional analysis is rewarded by obtaining a system
of $n-2$ (as opposed to $2n$) real linear equations with $4$
unknowns. Moreover, it allows us to describe the generic situation
under the assumption that the rank condition is satisfied. The refined
procedure is based on the following theorem (the proof of which is
given later in this section).
\begin{thm}\label{uv}
Let $A\in\mathbb{C}^{n\times n}$ satisfy the rank condition
\eqref{rank} (or equivalently \eqref{rank_cond}), and let $u_1$ and
$u_2$ be the unit eigenvectors of the matrix $A^*A-AA^*$
corresponding to its eigenvalues $\lambda_1=d (>0)$ and
$\lambda_2=-d$. Then
\begin{itemize}
    \item[(i)] There exist orthonormal vectors
$u_3,\ldots,u_n\in\Null(A^*A-AA^*)$ (and thus the matrix
$W=\begin{bmatrix} u_1 & \ldots &
u_n\end{bmatrix}\in\mathbb{C}^{n\times n}$ is unitary) such that
the matrix $\widetilde{A}=W^*AW$ has the form
\begin{equation}\label{A_newest}
\widetilde{A}=\begin{bmatrix}
a_{11} & a_{12} & {u}\\
a_{21} & a_{11} & {v}\\
{v}^T & {u}^T & {S}
\end{bmatrix},
\end{equation}
with $a_{ij}$'s defined in \eqref{a's}.
    \item[(ii)] $\nd(A)=1$ if and only if
the equation
\begin{equation}\label{id_new}
\Im({u}^*x_1+{v}^*x_2)=0
\end{equation}
has a solution pair $x_1,x_2\in\mathbb{C}$ satisfying
\begin{equation}\label{cond'}
|x_1|^2-|x_2|^2=d.
\end{equation}
\item[(iii)] If $\nd(A)=1$, $x_1$ and $x_2$ satisfy \eqref{id_new} and
    \eqref{cond'}, and $\mu\in\mathbb{T}$ is arbitrary then the matrix
    $B$ defined in \eqref{ncompl} is a minimal normal completion of
    $A$.\end{itemize} All minimal normal completions of $A$ arise in
this way. \end{thm}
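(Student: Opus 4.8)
The plan is to reduce everything to Lemma~\ref{nscond_new} (together with Remark~\ref{x_adj} and Lemma~\ref{xy}), which already characterizes $\nd(A)=1$ and all minimal normal completions once $A$ has been put, by a unitary similarity on an eigenbasis of $A^*A-AA^*$, into the block shape \eqref{A_form'}. The only genuinely new point is part~(i): that one can choose the eigenbasis so that the two off-diagonal blocks are linked by transposition, i.e.\ $\widetilde A=W^*AW$ has the form \eqref{A_newest}. Granting (i), parts (ii) and (iii) and the closing assertion follow by feeding \eqref{A_newest} into Lemma~\ref{nscond_new} and simplifying, plus the same change-of-basis bookkeeping used for Theorem~\ref{thm1}(ii).

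For part~(i) I would start from an arbitrary orthonormal eigenbasis of $A^*A-AA^*$, with $u_1,u_2$ the prescribed unit eigenvectors for $\pm d$, assembled into a unitary $U_0$; by Lemma~\ref{other_nec} the matrix $\widetilde A_0=U_0^*AU_0$ has the form \eqref{A_form'}, with scalars $a_{11}=a_{22}$, $a_{12}$, $a_{21}$, rows $u_0,v_0$ and columns $w_0^*,q_0^*$ satisfying the four scalar identities \eqref{uvqw}. Replacing the last $n-2$ basis vectors by their image under a unitary $R\in\mathbb{C}^{(n-2)\times(n-2)}$ conjugates $\widetilde A_0$ by $\diag(I_2,R)$, fixing $a_{11},a_{12},a_{21}$ and sending $(u_0,v_0)\mapsto(u_0R,v_0R)$, $(w_0^*,q_0^*)\mapsto(R^*w_0^*,R^*q_0^*)$. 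Hence the target shape \eqref{A_newest} is reached exactly when $R^*w_0^*=R^Tv_0^T$ and $R^*q_0^*=R^Tu_0^T$; left-multiplying by $(R^T)^{-1}=\overline R$ rewrites these as
\[
Q\,w_0^*=v_0^T,\qquad Q\,q_0^*=u_0^T,\qquad\text{with}\quad Q:=\overline R\,R^*.
\]
A direct check shows $Q$ is a \emph{symmetric unitary} matrix, and conversely every symmetric unitary is of this form (write $Q=\widetilde R\,\widetilde R^T$ by Autonne--Takagi, with $\widetilde R$ unitary since $Q$ is --- the middle diagonal factor is $I$ --- and take $R=\overline{\widetilde R}$). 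So part~(i) becomes: produce a symmetric unitary $Q$ sending the pair $(w_0^*,q_0^*)$ to $(v_0^T,u_0^T)$. This is where I expect the announced lemma on symmetric and unitary extensions to come in. A symmetric unitary $Q$ with $Qx=x'$ automatically has $Q\overline{x'}=\overline x$, so its existence forces the Gram matrices of the tuples $\bigl(w_0^*,q_0^*,\overline{v_0^T},\overline{u_0^T}\bigr)$ and $\bigl(v_0^T,u_0^T,\overline{w_0^*},\overline{q_0^*}\bigr)$ to coincide; a routine but fiddly computation confirms that each of these entries matches as a consequence of \eqref{uvqw} (several equalities being automatic, the norm ones coming from the first two identities in \eqref{uvqw}, and the cross ones from the last two). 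The extension lemma then delivers $Q$. I regard this --- and keeping the transpose/conjugate conventions straight throughout --- as the main obstacle.

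With \eqref{A_newest} secured the rest is short. Since $\widetilde A=W^*AW$ still satisfies \eqref{id1'} with $H$ as in \eqref{h} (it is a unitary conjugate of $A$ on an eigenbasis of $A^*A-AA^*$), Lemma~\ref{nscond_new} applies with $w^*=v^T$, $q^*=u^T$; for these values \eqref{eq3'} reads $u^*x_1+v^*x_2=v^T\overline x_2+u^T\overline x_1$, and because $\overline{u^*}=u^T$, $\overline{v^*}=v^T$ the right-hand side equals $\overline{u^*x_1+v^*x_2}$, so \eqref{eq3'} is equivalent to $u^*x_1+v^*x_2\in\mathbb R^{n-2}$, i.e.\ to \eqref{id_new}. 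This is part~(ii). For part~(iii), Lemma~\ref{nscond_new} gives \eqref{tilde_B_1} (with $w^*=v^T$, $q^*=u^T$) as a minimal normal completion of $\widetilde A$, Remark~\ref{x_adj} upgrades this to the $\mu$-twisted matrix \eqref{tilde_B} for every $\mu\in\mathbb T$, and conjugating by $\diag(W,1)$ returns it to the matrix $B$ of \eqref{ncompl}: the first two columns of $W$ being $u_1,u_2$, the off-diagonal entries become $\mu(x_1u_1+x_2u_2)$ and $\overline\mu(x_2u_1^*+x_1u_2^*)$, while $z$ --- a function of $a_{11},a_{12},a_{21},x_1,x_2,d$ only --- is unaffected. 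Finally, by Lemma~\ref{xy} the matrices \eqref{tilde_B}, over all admissible $x_1,x_2$ and all $\mu\in\mathbb T$, exhaust the minimal normal completions of $\widetilde A$, so after the fixed conjugation by $\diag(W,1)$ we obtain all minimal normal completions of $A$, which is the last statement.
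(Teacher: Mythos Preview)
Your proposal is correct and follows essentially the same route as the paper: part~(i) is reduced to producing a symmetric unitary (your $Q$, the paper's $\widetilde X$) from the identities \eqref{uvqw} via Lemma~\ref{symext} and then a Takagi factorization, the only cosmetic difference being that the paper works intrinsically in $\range(P)$ whereas you first fix an arbitrary eigenbasis and seek the change-of-basis matrix $R$ in $\mathbb C^{(n-2)\times(n-2)}$. Parts~(ii), (iii) and the closing assertion are handled identically in both, by specializing Lemma~\ref{nscond_new} (together with Remark~\ref{x_adj} and Lemma~\ref{xy}) to the form \eqref{A_newest} and conjugating back by $\diag(W,1)$.
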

\begin{rem}\label{expl_W}
\rm{The matrix $W$ in Theorem \ref{uv} can be constructed explicitly
as will become clear from the proof of the theorem.}
\end{rem}

Let $A\in\mathbb{C}^{n\times n}$ satisfy the rank condition.  We
define the vectors
$\mathbf{u}^*,\mathbf{v}^*,\mathbf{w}^*,\mathbf{q}^*\in\range(P)\subset\mathbb{C}^{n}$
by \eqref{hat_vects} (see also Theorem \ref{thm1} for the definition of
$P$, $u_1$, and $u_2$). Since these vectors can be viewed as the
images of vectors $u^*,v^*,w^*,q^*\in\mathbb{C}^{n-2}$ under an
isometry so that \eqref{uvqw} holds (see Lemma \ref{other_nec} and
the proof of Theorem \ref{thm1}), we have
\begin{equation}\label{uvqw_hat}
\mathbf{u}\mathbf{u}^*=\mathbf{q}\mathbf{q}^*,\quad
\mathbf{v}\mathbf{v}^*=\mathbf{w}\mathbf{w}^*,\quad
\mathbf{u}\mathbf{v}^*=\mathbf{w}\mathbf{q}^*,\quad
\mathbf{u}\mathbf{w}^*=\mathbf{v}\mathbf{q}^*.
\end{equation}
The first three equalities mean that the linear operator
\begin{equation}\label{sym_u}
X \colon \spn(\mathbf{u}^T,\mathbf{v}^T)\longrightarrow
\spn(\mathbf{q}^*,\mathbf{w}^*)
\end{equation}
defined via
\begin{equation}\label{def_X}
 X \colon\mathbf{u}^T\longmapsto\mathbf{q}^*,\quad
\mathbf{v}^T\longmapsto\mathbf{w}^*
\end{equation}
 is a well defined unitary operator. In order to
interpret the last equality in \eqref{uvqw_hat} we need an
intermission for some definitions and results on (complex)
symmetric operators and matrices (see, e.g., \cite[Section
4.4]{HJ}).

For a subspace $\mathcal{H}$ in $\mathbb{C}^k$, denote its complex
dual  by $$\overline{\mathcal{H}}:=\{ h\in\mathbb{C}^k\colon
\overline{h}\in\mathcal{H}\}.$$  We will say that a
$\mathbb{C}$-linear operator
$L\colon\mathcal{H}\to\overline{\mathcal{H}}$ is \emph{symmetric}
if $h^TLg=g^TLh$ (or, equivalently, $\langle
Lg,\overline{h}\rangle=\langle Lh,\overline{g}\rangle$ in the
standard inner product in $\mathbb{C}^k$) for all
$g,h\in\mathcal{H}$. It is clear that a $\mathbb{C}$-linear
operator $L\colon\mathbb{C}^k\to\mathbb{C}^k$ is symmetric if and
only if its matrix in a standard basis of $\mathbb{C}^k$ is
complex symmetric, i.e, $L=L^T$. In general, a $\mathbb{C}$-linear
operator $L\colon\mathcal{H}\to\overline{\mathcal{H}}$ is
symmetric if and only if its matrix in any pair of orthonormal
bases $\mathcal{B}=\{h_j\}_{j=1}^k\subset\mathcal{H}$ and
$\overline{\mathcal{B}}=\{\overline{h}_j\}_{j=1}^k\subset\overline{\mathcal{H}}$
is complex symmetric, i.e.,  $\langle
Lh_j,\overline{h}_i\rangle=\langle Lh_i,\overline{h}_j\rangle,\
i,j=1,\ldots,k.$

We can restate this also in a coordinate-free form. Let $\mathcal{G}$
and $\mathcal{H}$ be two subspaces in $\mathbb{C}^k$. For a
$\mathbb{C}$-linear operator
$L\colon\mathcal{G}\to\overline{\mathcal{H}}$ we define its
transpose as the unique $\mathbb{C}$-linear operator
$L^T\colon\mathcal{H}\to\overline{\mathcal{G}}$ which satisfies
$h^TLg=g^TL^Th$ (or, equivalently, $\langle
Lg,\overline{h}\rangle=\langle Lh,\overline{g}\rangle$) for all
$g\in\mathcal{G}$, $h\in\mathcal{H}$). Explicitly,
$L^Th=\overline{L^*\overline{h}}$ for every $h\in\mathcal{H}$. This, in
particular, implies that $(LM)^T=M^TL^T$ for two $\mathbb{C}$-linear
operators $L$, $M$. If one interprets a vector $h\in\mathcal{H}$ as an
operator $h\colon \mathbb{C}\to\mathcal{H}$ then its transpose
$h^T$ can be interpreted as the operator
$h^T\colon\overline{\mathcal{H}}\to\mathbb{C}$, and then the
identity $h^TLg=g^TL^Th$ can be viewed also as
$g^TL^Th=(h^TLg)^T$. Finally, a $\mathbb{C}$-linear operator
$L\colon\mathcal{H}\to\overline{\mathcal{H}}$ is symmetric if and
only if $L=L^T$.

We also observe that a matrix of a $\mathbb{C}$-linear operator
$L\colon\mathcal{G}\to\overline{\mathcal{H}}$ in a pair of
orthonormal bases $\mathcal{B}_1$ and $\overline{\mathcal{B}}_2$
and a matrix of $L^T\colon\mathcal{H}\to\overline{\mathcal{G}}$ in
the pair of orthonormal bases $\mathcal{B}_2$ and
$\overline{\mathcal{B}}_1$ are transposes of each other.
\begin{lem}\label{symext}
Let $\mathcal{H}$ and $\mathcal{L}$ be subspaces in $\mathbb{C}^k$
such that $\mathcal{H}\subset\mathcal{L}$, and let $Y\colon
\mathcal{H}\to\overline{\mathcal{L}}$ be an isometry such that
$P_{\overline{\mathcal{H}}}Y\colon\mathcal{H}\to\overline{\mathcal{H}}$
is a symmetric operator. Then there exists a unitary and symmetric
operator $\widetilde{Y}\colon\mathcal{L}\to\overline{\mathcal{L}}$
such that $\widetilde{Y}|_{\mathcal{H}}=Y$.
\end{lem}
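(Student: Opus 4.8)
The plan is to extend the isometry $Y$ to a unitary operator $\widetilde{Y}\colon\mathcal{L}\to\overline{\mathcal{L}}$ and then repair it to also be symmetric. The dimension count is automatic: $Y$ being an isometry from $\mathcal{H}$ into $\overline{\mathcal{L}}$ means $\dim\mathcal{H}\le\dim\mathcal{L}$, and since $\dim\mathcal{H}=\dim\overline{\mathcal{H}}\le\dim\overline{\mathcal{L}}$, there is room. So first I would decompose $\mathcal{L}=\mathcal{H}\oplus\mathcal{H}^{\perp\mathcal{L}}$ and $\overline{\mathcal{L}}=Y(\mathcal{H})\oplus (Y(\mathcal{H}))^{\perp\overline{\mathcal{L}}}$, where the second summands have equal dimension $m:=\dim\mathcal{L}-\dim\mathcal{H}$. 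Any unitary $Z\colon\mathcal{H}^{\perp\mathcal{L}}\to(Y(\mathcal{H}))^{\perp\overline{\mathcal{L}}}$ yields a unitary extension $Y_0=Y\oplus Z$, but this need not be symmetric. The point is that we have freedom in choosing $Z$, and more importantly we have the freedom to post-compose by a suitable unitary on $\overline{\mathcal{L}}$, which I will exploit below.

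Next I would set up the symmetry repair. Pick any unitary extension $Y_0\colon\mathcal{L}\to\overline{\mathcal{L}}$ of $Y$ and consider $R:=Y_0^{-T}Y_0=\overline{Y_0^{-*}}\,Y_0\colon\mathcal{L}\to\mathcal{L}$ (using $L^Th=\overline{L^*\overline h}$ as in the excerpt); one checks $R$ is unitary. A unitary $\widetilde Y$ is symmetric iff $\widetilde Y^T=\widetilde Y$, equivalently $\widetilde Y^{-T}\widetilde Y=I$; writing $\widetilde Y=Y_0 C$ with $C\colon\mathcal{L}\to\mathcal{L}$ unitary, the symmetry condition becomes $C^T R C = I$, i.e. $R=(C^T)^{-1}C^{-1}=(C C^T)^{-1}$, so we need $C C^T=R^{-1}$. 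Since $R^{-1}$ is unitary and satisfies $(R^{-1})^T=Y_0^T Y_0^{-*T}\cdots$ — here I would verify directly that $R^{-1}=\overline{R}^{\,-1}$-type relation making $R^{-1}$ a \emph{unitary symmetric} operator automatically from $R=Y_0^{-T}Y_0$ (because $R^T=Y_0^T Y_0^{-*T}$... one computes $R^{-T}=Y_0^{-T}Y_0=R$ only if things align, so more carefully: $R^{T}=(Y_0^{-T}Y_0)^T=Y_0^T(Y_0^{-T})^T=Y_0^TY_0^{-*}$, and comparing with $R^{-1}=Y_0^{-1}Y_0^T$ one sees $R$ need not be symmetric, but $R$ \emph{is} unitary). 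So the task reduces to: given a unitary $R$ on $\mathcal{L}$, find a unitary $C$ with $CC^T=R^{-1}$. This is the standard fact that every unitary matrix has a \emph{unitary symmetric square root–type factorization} — concretely, writing the (complex orthogonal–type) Takagi/Autonne decomposition or using $R^{-1}=V\overline V^{-1}$... — but the cleanest route is: the constraint $P_{\overline{\mathcal H}}Y$ symmetric is exactly what guarantees that the block of $R$ corresponding to $\mathcal H$ is already the identity, so $C$ can be taken to act only on $\mathcal H^{\perp\mathcal L}$, where there is no constraint and we may choose $Z$ from the outset to make things symmetric.

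**The main obstacle** will be handling the interaction between $\mathcal{H}$ and its orthocomplement: we must extend $Y$ \emph{without disturbing it} on $\mathcal H$, yet the naive block-diagonal extension $Y\oplus Z$ is generally not symmetric even when $P_{\overline{\mathcal H}}Y$ is, because $Y(\mathcal H)$ need not equal $\overline{\mathcal H}$ — there can be a "cross term." I expect the correct approach is therefore: first replace $\mathcal L$-coordinates so that the hypothesis "$P_{\overline{\mathcal H}}Y$ symmetric" can be leveraged via a Takagi (Autonne–Takagi) factorization of the operator $P_{\overline{\mathcal H}}Y\colon\mathcal H\to\overline{\mathcal H}$: since it is a symmetric contraction, there is an orthonormal basis $\{h_j\}$ of $\mathcal H$ and nonnegative scalars $\sigma_j\le 1$ with $(P_{\overline{\mathcal H}}Y)h_j=\sigma_j\overline{h_j}$. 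Writing $Yh_j=\sigma_j\overline{h_j}+\sqrt{1-\sigma_j^2}\,f_j$ with $f_j\perp\overline{\mathcal H}$, the $f_j$ are orthonormal (isometry of $Y$), and one then extends by sending a matching orthonormal family in $\mathcal H^{\perp}$ to the remaining directions so as to build a $2\times 2$ block rotation $\begin{bmatrix}\sigma_j & \sqrt{1-\sigma_j^2}\\ \sqrt{1-\sigma_j^2} & -\sigma_j\end{bmatrix}$-style symmetric unitary on each pair $(\overline{h_j},\,\text{new vector})$. Assembling these blocks, together with an arbitrary unitary symmetric operator on whatever is left over (which exists — e.g. a permutation-type involution — as long as dimensions match, and they do), produces $\widetilde Y$ unitary, symmetric, and restricting to $Y$ on $\mathcal H$. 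The verification that this assembled operator is genuinely symmetric and genuinely extends $Y$ is then a routine block computation, which I would carry out but not belabor here.
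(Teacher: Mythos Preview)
Your final approach (in the ``main obstacle'' paragraph) is correct and is essentially the paper's proof: Takagi-factorize the symmetric contraction $P_{\overline{\mathcal H}}Y$ to get $Yh_j=\sigma_j\overline{h_j}+\sqrt{1-\sigma_j^2}\,f_j$, then extend by the $2\times 2$ symmetric reflections $\begin{bmatrix}\sigma_j & \sqrt{1-\sigma_j^2}\\ \sqrt{1-\sigma_j^2} & -\sigma_j\end{bmatrix}$ on the pairs $(\overline{h_j},f_j)$ and by an arbitrary symmetric unitary on the leftover; the paper writes this in block form as $\begin{bmatrix}U\Sigma U^T & U(I-\Sigma^2)^{1/2}V^T & 0\\ V(I-\Sigma^2)^{1/2}U^T & -V\Sigma V^T & 0\\ 0 & 0 & J\end{bmatrix}$ with $\mathcal L=\mathcal H\oplus\mathcal G\oplus\mathcal K$, $\mathcal G=\overline{\range(P_{\overline{\mathcal L}\ominus\overline{\mathcal H}}Y)}$.

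Your exploratory middle paragraph (the $R=Y_0^{-T}Y_0$ repair and the claim that the $\mathcal H$-block of $R$ is automatically the identity) is a detour you rightly abandon; that claim is not correct for an arbitrary unitary extension $Y_0$, so it is good that you do not rely on it. One small point to make precise in your write-up: the ``matching orthonormal family in $\mathcal H^{\perp}$'' must be $\{\overline{f_j}\}$, i.e.\ the conjugates of the $f_j$, which is why the paper singles out the subspace $\mathcal G$; and when $\sigma_j=1$ the vector $f_j$ is undefined but also unneeded, which the paper handles by making $V$ a coisometry.
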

\begin{proof}
We have $$\mathcal{L}=\mathcal{H}\oplus \mathcal{G}\oplus
\mathcal{K},$$ where $\mathcal{G}=\overline{\range
(P_{\overline{L}\ominus\overline{H}}Y)}$ and
$\mathcal{K}=\mathcal{L}\ominus(\mathcal{H}\oplus\mathcal{G})$.
The Takagi decomposition (see, e.g., \cite{HJ}) of the symmetric
operator $P_{\overline{\mathcal{H}}}Y$, in a coordinate-free form,
is
$$P_{\overline{\mathcal{H}}}Y=U\Sigma U^T,$$ where
$U\colon \mathbb{C}^{\dim(\mathcal{H})}\to\overline{\mathcal{H}}$
is a unitary operator such that $Ue_1$, \ldots,
$Ue_{\dim(\mathcal{H})}$ are the eigenvectors of
$P_{\overline{\mathcal{H}}}Y\overline{P_{\overline{\mathcal{H}}}Y}$,
and $\Sigma\colon\mathbb{C}^{\dim
(\mathcal{H})}\to\mathbb{C}^{\dim (\mathcal{H})}$ is an operator
whose matrix in the standard basis of $\mathbb{C}^{\dim
(\mathcal{H})}$ is diagonal, with the singular values of
$P_{\overline{\mathcal{H}}}Y$  on the diagonal. The operator
$P_{\overline{\mathcal{G}}}Y$ can be represented as
$$P_{\overline{\mathcal{G}}}Y=V(I_{\mathbb{C}^{\dim
(\mathcal{H})}}-\Sigma^2)^{1/2}U^T,$$ where
$V\colon\mathbb{C}^{\dim (\mathcal{H})}\to\overline{\mathcal{G}}$
is a coisometry, with
$$V|_{\range(I_{\mathbb{C}^{\dim
(\mathcal{H})}}-\Sigma^2)}\colon\range(I_{\mathbb{C}^{\dim
(\mathcal{H})}}-\Sigma^2) \to\overline{\mathcal{G}}$$ unitary.
Define the operator $J\colon \mathcal{K}\to\overline{\mathcal{K}}$
for some pair of orthonormal bases
$\mathcal{B}=\{\kappa_j\}_{j=1}^{\dim(\mathcal{K})}\subset\mathcal{K}$,
$\overline{\mathcal{B}}=\{\overline{\kappa}_j\}_{j=1}^{\dim(\mathcal{K})}\subset\overline{\mathcal{K}}$
as
$$J\left(\sum_{j=1}^{\dim(\mathcal{K})}\alpha_j\kappa_j\right)=
\sum_{j=1}^{\dim(\mathcal{K})}\alpha_j\overline{\kappa}_j,\quad
\alpha_1,\ldots,\alpha_{\dim(\mathcal{K})}\in\mathbb{C}.$$ Clearly,
$J$ is symmetric, and the matrix of $J$ in the pair of bases
$\mathcal{B}$ and $\overline{\mathcal{B}}$ is
$I_{\dim(\mathcal{K})}$. It is straightforward to verify that the operator
$$\widetilde{Y}:=\begin{bmatrix}
U\Sigma U^T & U(I_{\mathbb{C}^{\dim
(\mathcal{H})}}-\Sigma^2)^{1/2}V^T & 0\\
V(I_{\mathbb{C}^{\dim (\mathcal{H})}}-\Sigma^2)^{1/2}U^T &
-V\Sigma
V^T & 0\\
0 & 0 & J
\end{bmatrix}\colon \begin{array}{c}
  \mathcal{H} \\
  \oplus \\
  \mathcal{G} \\
  \oplus \\
  \mathcal{K} \\
\end{array}\to  \begin{array}{c}
  \overline{\mathcal{H}} \\
  \oplus \\
  \overline{\mathcal{G}} \\
  \oplus \\
  \overline{\mathcal{K}} \\
\end{array}$$
has the desired properties.
\end{proof}
\begin{cor}\label{Xext}
The unitary operator $X$ defined by \eqref{sym_u}--\eqref{def_X}
can be extended to a unitary and symmetric operator
$\widetilde{X}\colon \range(\overline{P})\to\range(P)$.
\end{cor}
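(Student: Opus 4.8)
The plan is to reduce the corollary to Lemma~\ref{symext}. I would apply that lemma with $k=n$, with $\mathcal{L}=\range(\overline{P})$, with $\mathcal{H}=\spn(\mathbf{u}^T,\mathbf{v}^T)$, and with $Y=X$, the operator defined in \eqref{sym_u}--\eqref{def_X} (recall it is already known to be well defined and unitary, thanks to the first three equalities in \eqref{uvqw_hat}). The routine preliminary checks are: first, $\mathcal{H}\subseteq\mathcal{L}$, because by \eqref{hat_vects} the vectors $\mathbf{u}^*=PA^*u_1$ and $\mathbf{v}^*=PA^*u_2$ lie in $\range(P)$, hence their conjugates $\mathbf{u}^T,\mathbf{v}^T$ lie in $\overline{\range(P)}=\range(\overline{P})=\mathcal{L}$; second, $\overline{\mathcal{L}}=\overline{\range(\overline{P})}=\range(P)$, and since $X$ maps $\mathcal{H}$ unitarily onto $\spn(\mathbf{q}^*,\mathbf{w}^*)\subseteq\range(P)$, it is an isometry $X\colon\mathcal{H}\to\overline{\mathcal{L}}$.

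The one substantive hypothesis to verify is that $P_{\overline{\mathcal{H}}}X\colon\mathcal{H}\to\overline{\mathcal{H}}$ is symmetric. Here $\overline{\mathcal{H}}=\spn(\mathbf{u}^*,\mathbf{v}^*)$, so for $g,h\in\mathcal{H}$ one has $h^T(P_{\overline{\mathcal{H}}}X)g=\langle P_{\overline{\mathcal{H}}}Xg,\overline{h}\rangle=\langle Xg,\overline{h}\rangle=h^TXg$ (using $\overline{h}\in\overline{\mathcal{H}}$ and self-adjointness of the projection). Thus symmetry of $P_{\overline{\mathcal{H}}}X$ is equivalent to $h^TXg=g^TXh$ for all $g,h\in\mathcal{H}$, which by bilinearity it suffices to test on the spanning pair $\mathbf{u}^T,\mathbf{v}^T$. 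The diagonal cases $g=h$ are automatic, and the remaining case $g=\mathbf{u}^T$, $h=\mathbf{v}^T$ reads $\mathbf{v}X\mathbf{u}^T=\mathbf{u}X\mathbf{v}^T$, i.e.\ $\mathbf{v}\mathbf{q}^*=\mathbf{u}\mathbf{w}^*$, which is exactly the fourth identity in \eqref{uvqw_hat}.

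With all hypotheses in place, Lemma~\ref{symext} yields a unitary and symmetric operator $\widetilde{X}\colon\mathcal{L}\to\overline{\mathcal{L}}$, that is, $\widetilde{X}\colon\range(\overline{P})\to\range(P)$, extending $X$, which is the assertion of the corollary. The only place where anything beyond bookkeeping happens is the reduction of the symmetry of $P_{\overline{\mathcal{H}}}X$ to the last relation of \eqref{uvqw_hat}; once the identity $h^T(P_{\overline{\mathcal{H}}}X)g=h^TXg$ for $h\in\mathcal{H}$ is noticed, the conjugate-dual subspace formalism of the preceding paragraphs makes the rest mechanical.
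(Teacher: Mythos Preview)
Your proof is correct and follows essentially the same route as the paper's own proof: both reduce directly to Lemma~\ref{symext} with $\mathcal{H}=\spn(\mathbf{u}^T,\mathbf{v}^T)$, $\mathcal{L}=\range(\overline{P})$, $Y=X$, and both identify the symmetry of $P_{\overline{\mathcal{H}}}X$ with the fourth relation in \eqref{uvqw_hat}. Your write-up is in fact more explicit than the paper's, spelling out the identity $h^T(P_{\overline{\mathcal{H}}}X)g=h^TXg$ and the bilinear reduction to the spanning pair; the paper simply asserts that the last identity in \eqref{uvqw_hat} ``means'' the compression is symmetric. (A minor discrepancy: the paper writes $k=n-2$, presumably having in mind $\dim\range(P)$, whereas your choice $k=n$ matches the ambient space in which the bold vectors actually live; your choice is the one consistent with the hypotheses of Lemma~\ref{symext} as stated.)
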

\begin{proof}
Since $\mathbf{u}^T,\mathbf{v}^T\in\range(\overline{P})$ and
$\mathbf{q}^*,\mathbf{w}^*\in\range(P)$ (see \eqref{hat_vects}),
the operator $X$ can be viewed as an isometry $X\colon\spn
(u^T,v^T)\to\range(P)$. The last identity in \eqref{uvqw_hat}
means that the operator $$P_{\spn
(\mathbf{u}^*,\mathbf{v}^*)}X\colon\spn
(\mathbf{u}^T,\mathbf{v}^T)\to\spn (\mathbf{u}^*,\mathbf{v}^*)$$
is symmetric. Then the statement of this corollary follows from
Lemma \ref{symext}, where we set $k=n-2$,
$$\mathcal{H}=\spn
(\mathbf{u}^T,\mathbf{v}^T)=\spn(\overline{P}A^T\overline{u}_1,\overline{P}A^T\overline{u}_2),$$
$Y=X$ and $\mathcal{L}=\range(\overline{P})$.
\end{proof}
\begin{rem}\label{aui}
\rm{It can be shown that the unitary and symmetric operator
$\widetilde{X}$ in Corollary \ref{Xext} can be constructed
bypassing Lemma \ref{symext} and using instead the following
remarkable theorem from \cite{Woron}: If $A\in\mathbb{C}^{n\times
n}$ and $x,y\in\mathbb{C}^n$ are such that $A^*A-AA^*=xx^*-yy^*$
then there exists an antiunitary involution $\iota $ on
$\mathbb{C}^n$ such that $\iota x=y$ and $\iota A\iota=A^*$. (A
mapping $\iota\colon \mathbb{C}^n\to\mathbb{C}^n$ is called an
antiunitary involution if $\iota^2=I_n$ and $\langle \iota h,\iota
g\rangle =\langle g,h\rangle$ for every $h,g\in\mathbb{C}^n$, in
the standard inner product in $\mathbb{C}^n$.) Our Lemma
\ref{symext} seems to be of independent interest, and can be
applied to other problems as well. }
\end{rem}

\begin{proof}[Proof of Theorem \ref{uv}]
(i) The operator $\widetilde{X}$ in Corollary \ref{Xext}, which is
constructed as in Lemma \ref{symext}
 for the given matrix
$A$, is symmetric and unitary, and thus has a Takagi factorization
(see \cite{HJ}) $\widetilde{X}=GG^T$ where
$G\colon\mathbb{C}^{n-2}\to\range(P)$ is unitary. One can view $G$
as an isometry $G^\prime\colon\mathbb{C}^{n-2}\to\mathbb{C}^n$
with the same range as $G$: $$\range(G^\prime
)=\range(G)=\range(P).$$ Clearly, the columns $u_3$, \ldots, $u_n$
of the standard matrix of $G^\prime$ are orthonormal, and hence,
together with $u_1$ and $u_2$, form an orthonormal eigenbasis of
$A^*A-AA^*$. We also have $G^\prime G^{\prime *}=P$. We then
extend $\widetilde{X}$ to the operator $X^\prime=G^\prime
G^{\prime T}\colon\mathbb{C}^n\to\mathbb{C}^n$.
 The operator represented
by the matrix $A$ in the standard basis of $\mathbb{C}^n$, in the
basis $u_1$, \ldots, $u_n$ has the matrix
$$\widetilde{A}=\begin{bmatrix}
u_1^*\\
u_2^*\\
G^{\prime *}
\end{bmatrix}A\begin{bmatrix} u_1 & u_2 & G^\prime \end{bmatrix}=
\begin{bmatrix}
a_{11} & a_{12} & u_1^*AG^\prime\\
a_{21} & a_{11} & u_2^*AG^\prime\\
G^{\prime *}Au_1 & G^{\prime *}Au_2 & G^{\prime *}AG^\prime
\end{bmatrix}.$$
We have
\begin{eqnarray*}
(u_1^*AG^\prime)^T &=& G^{\prime T}A^T\overline{u}_1=G^{\prime
*}G^\prime G^{\prime T}A^T\overline{u}_1=G^{\prime
*}X^{\prime}A^T\overline{u}_1=G^{\prime
*}X^{\prime}\overline{P}A^T\overline{u}_1\\
&=& G^{\prime *}PAu_2=G^{\prime *}Au_2,
\end{eqnarray*}
and  similarly,
$$(u_2^*AG^\prime)^T=G^{\prime *}Au_1.$$
Setting $u=u_1^*AG^\prime$, $v=u_2^*AG^\prime$, $S=G^{\prime
*}AG^\prime$, and $W=\begin{bmatrix} u_1 & u_2 & u_3 & \ldots &
u_n \end{bmatrix}$, we see that
 $\widetilde{A}=W^*AW$ has the form \eqref{A_newest}, which proves
part (i).

(ii) It follows from Lemma \ref{nscond_new} that
$\nd(A)=\nd(\widetilde{A})=1$ if and only if there exist
$x_1,x_2\in\mathbb{C}$ satisfying \eqref{cond'} such that
$$
{u}^*x_1+{v}^*x_2={v}^T\overline{x}_2+{u}^T\overline{x}_1.
$$
Since the last equation is equivalent to \eqref{id_new}, this
proves part (ii).

(iii) This part is proved in the same way as part (ii) of Theorem
\ref{thm1}.
\end{proof}
Let $u,v\in\mathbb{C}^{n-2}$ be as in Theorem \ref{uv},
${u}={u}_R+i{u}_I$, ${v}={v}_R+i{v}_I$, where
${u}_R^T,{u}_I^T,{v}_R^T,{v}_I^T\in\mathbb{R}^{n-2}$. Let
$x_1=x_{R1}+ix_{I1}$, $x_2=x_{R2}+ix_{I2}$, where $x_{R1}$,
$x_{I1}$, $x_{R2}$, $x_{I2}\in\mathbb{R}$. Then \eqref{id_new} can
be written as \begin{equation}\label{re_form} Qx=0,
\end{equation}
where
\begin{equation}\label{Q}
Q=\begin{bmatrix} -{u}_I^T & {u}_R^T & -{v}_I^T & {v}_R^T
\end{bmatrix}\in\mathbb{R}^{(n-2)\times 4},\quad x=\begin{bmatrix}
x_{R1}\\
x_{I1}\\
x_{R2}\\
x_{I2}
\end{bmatrix}\in\mathbb{R}^4.
\end{equation}
\begin{rem}\label{finer_proc}
\rm{Observe that replacing $\mathbf{u}$, $\mathbf{v}$,
$\mathbf{w}$, and $\mathbf{q}$  in \eqref{short} by ${u}$, ${v}$,
$\overline{v}$, and $\overline{u}$ as in Theorem \ref{uv}
 we obtain an equivalent
condition, i.e.,  equation \eqref{re_form} replaces \eqref{short}
with the matrix $Q$ replacing $\mathbf{Q}$. Thus instead of $2n$
real linear equations with $4$ unknowns we obtain $n-2$ real
linear equations with $4$ unknowns.
 Let $m=\rank (Q)$. Then, for all possible cases of $m$, the procedure for checking
 whether $\nd(A)=1$, and if this is the case -- for constructing all
 minimal normal completions of  $A$, is exactly
 the same as described is Section \ref{shorter}, with $Q$ in the place of
 $\mathbf{Q}$.}
\end{rem}

 We describe now the generic situation for each $n$,
 under the assumption that $\rank(A^*A-AA^*)=2$. In other words,
 we obtain certain topological characterization of the set of
 matrices with normal defect one in each matrix dimension.

\medskip

\centerline{\textbf{The generic case}}

\medskip

Let $A\in\mathbb{C}^{n\times n}$ satisfy the rank condition.
Consider the following possibilities for the value of $n$, and
describe the situation for each case separately.

 \begin{description}
    \item[$\mathbf{n=2}$ or $\mathbf{n=3}$] Vectors ${u}$, ${v}$ as
        in Theorem \ref{uv} do not arise (when $n=2$) or are scalars
        (when $n=3$). Then $m=\rank(Q)\le 1$, and $\nd(A)=1$
(for the case where $m=1$ it follows from Remark \ref{m=1}). This
gives a new proof of the statement on $2\times 2$ matrices in
Section \ref{sec:intro} and of
    Corollary \ref{n=3}.
    \item[$\mathbf{n=4}$ or $\mathbf{n=5}$] In these cases,  $m\le
        2$ (resp., $m\le 3$). Thus, equation \eqref{re_form} (see also
        \eqref{Q}) has nontrivial solutions. Both the situation where the
        matrix $\mathbf{K}$, constructed from $Q$ instead of
        $\mathbf{Q}$, has at least one positive eigenvalue (in which
        case $\nd(A)=1$) and where $\mathbf{K}$ has no positive
        eigenvalues (in which case $\nd(A)>1$) occur on sets with
        nonempty interior in the relative topology of the manifolds
        ${\frak M}_4$ and ${\frak M}_5$ (see page \pageref{manif} for
        the definition of ${\frak M}_n$).
    \item[$\mathbf{n\ge 6}$] In this case, generically $m=4$, thus
        \eqref{re_form} has no nontrivial solutions. Therefore,
        generically $\nd(A)>1$. Still, there are matrices $A$ with
        $\nd(A)=1$, which can be constructed, e.g., using Theorem
        \ref{nd1} and Remark \ref{not_normal}.
 \end{description}

\subsection{Normal defect and unitary defect}\label{exs} In this section, we give
two examples which show that the question in \cite{W} (see also
\cite{KW}) asking whether $\nd(A)=\ud(A)$ for any unitarily
irreducible matrix $A$ has a negative answer. In the first
example, $A$ has a single cell in its Jordan form, and in the
second example, $A$ has three distinct eigenvalues. We also
present all minimal normal completions of $A$ in both examples.
\begin{ex}\label{ex:a}\rm{
Let $$A=\begin{bmatrix} 1 & 0 & 0\\
0 & 1 & 1\\
1 & 0 & 1
\end{bmatrix}.$$
Then $$A^*A=\begin{bmatrix} 2 & 0 & 1\\
0 & 1 & 1\\
1 & 1 & 2
\end{bmatrix},\quad AA^*=\begin{bmatrix} 1 & 0 & 1\\
0 & 2 & 1\\
1 & 1 & 2
\end{bmatrix},$$
and $$ A^*A-AA^*=\begin{bmatrix} 1 & 0 & 0\\
0 & -1 & 0\\
0 & 0 & 0
\end{bmatrix}.$$
Clearly, the rank condition holds. It follows from Corollary \ref{n=3}
that $\nd(A)=1$. Since the only eigenvalue of $A$ is $1$, and $A-I$ is
nilpotent of order $3$, $A$ has a single cell in its Jordan form, and
hence $A$ is unitarily irreducible. The characteristic polynomial of
$A^*A$ is
$$p(\lambda)=(2-\lambda)^2(1-\lambda)+2\lambda-3.$$
We have $p(0)=1>0$, $p(1)=-1<0$, $p(2)=1>0$, and $p(4)=-7<0$.
Therefore, $p(\lambda)$ has three distinct roots, in intervals
$(0,1)$, $(1,2)$, and $(2,4)$, i.e., $A$ has three distinct
singular values. Therefore, $\ud(A)=2$.

We also observe that $A$ has the form \eqref{A_newest}. The
procedure described in Section \ref{shorter} together with Theorem
\ref{thm1} (or its refined version described in Remark
\ref{finer_proc} together with Theorem \ref{uv}) gives that all
minimal normal completions of $A$ have the form
$$B=\begin{bmatrix}
1 & 0 & 0 & \mu x_1\\
0 & 1 & 1 & \mu x_2\\
1 & 0 & 1 & 0\\
\overline{\mu} x_2 & \overline{\mu} x_1 & 0 & 1
\end{bmatrix},$$
with arbitrary $\mu\in\mathbb{T}$, and $x_1\in\mathbb{C}$,
$x_2\in\mathbb{R}$ satisfying $|x_1|^2-x_2^2=1$.
 }
\end{ex}
\begin{ex}\label{ex:b} \rm{
Let $$A=\begin{bmatrix} 0 & 1 & 0\\
1 & 0 & 1\\
0 & 1 & \frac{3}{2}i
\end{bmatrix}.$$
Changing the basis, we obtain $\widetilde{A}=U^*AU$, where
$$U=\begin{bmatrix} 0 & 0 & 1\\
\frac{1}{\sqrt{2}} & \frac{1}{\sqrt{2}} & 0\\
-\frac{i}{\sqrt{2}} & \frac{i}{\sqrt{2}} & 0
\end{bmatrix}$$ is unitary
and $$\widetilde{A}=\begin{bmatrix} \frac{3i}{4} & \frac{i}{4} & \frac{1}{\sqrt{2}}\\
-\frac{7i}{4} & \frac{3i}{4} & \frac{1}{\sqrt{2}}\\
\frac{1}{\sqrt{2}} & \frac{1}{\sqrt{2}} & 0
\end{bmatrix}$$
satisfies
$$\widetilde{A}^*\widetilde{A}-\widetilde{A}\widetilde{A}^*=\begin{bmatrix} 3 & 0 & 0\\
0 & -3 & 0\\
0 & 0 & 0
\end{bmatrix}.$$
Clearly,
$\rank(A^*A-AA^*)=\rank(\widetilde{A}^*\widetilde{A}-\widetilde{A}\widetilde{A}^*)=2$.
By Corollary \ref{n=3}, $\nd(A)=1$. We also observe that
$\widetilde{A}$ has the form \eqref{A_newest}. The matrix $A$ is
unitarily irreducible. Indeed, matrices
$$ \Re (A)=\begin{bmatrix} 0 & 1 & 0\\
1 & 0 & 1\\
0 & 1 & 0
\end{bmatrix}\quad \rm{and}\quad \Im (A)=\begin{bmatrix} 0 & 0 & 0\\
0 & 0 & 0\\
0 & 0 & \frac{3}{2}
\end{bmatrix}$$ do not have common eigenvectors.

Next we show that $\ud(A)=2$. The characteristic polynomial of
 $A^*A$,
$$p(\lambda)=(1-\lambda)(2-\lambda)\left(\frac{13}{4}-\lambda\right)+\frac{13}{4}\lambda-\frac{17}{4},$$
has values $p(0)=\frac{9}{4}>0$, $p(1)=-1<0$,
$p(2)=\frac{9}{4}>0$, $p(5)=-9<0$. Therefore, $p(\lambda)$ has
three distinct roots, in intervals $(0,1)$, $(1,2)$, and $(2,5)$,
i.e., $A$  has three distinct singular values. Thus, $\ud(A)=2$.
Note  that in this example $A$ has three distinct eigenvalues,
$\lambda_1=i$, $\lambda_2=\frac{\sqrt{23}+i}{4}$,
$\lambda_3=\frac{-\sqrt{23}+i}{4}$.

The procedure described in Section \ref{shorter} together with
Theorem \ref{thm1} (or its refined version described in Remark
\ref{finer_proc} together with Theorem \ref{uv}) gives that all
 minimal normal completions of $\widetilde{A}$ have the form
$$\widetilde{B}=\begin{bmatrix} \frac{3i}{4} & \frac{i}{4} & \frac{1}{\sqrt{2}} & \mu (h_1+ih_3)\\
-\frac{7i}{4} & \frac{3i}{4} & \frac{1}{\sqrt{2}} & \mu (h_2-ih_3)\\
\frac{1}{\sqrt{2}} & \frac{1}{\sqrt{2}} & 0 & 0\\
\overline{\mu}(h_2-ih_3) & \overline{\mu}(h_1+ih_3) & 0 &
\frac{h_1h_3+5h_2h_3+i(3-2h_1h_2+2h_2^2)}{3}
\end{bmatrix},$$
with arbitrary $\mu\in\mathbb{T}$, and
$h_1,h_2,h_3\in\mathbb{R}\colon h_1^2-h_2^2=3$. Correspondingly,
all minimal normal completions of $A$ have the form
$$B=\begin{bmatrix} 0 & 1 & 0 & 0\\
1 & 0 & 1 & \mu\frac{h_1+h_2}{\sqrt{2}}\\
0 & 1 & \frac{3}{2}i & \mu\frac{2h_3+i(h_2-h_1)}{\sqrt{2}}\\
0 & \overline{\mu}\frac{h_1+h_2}{\sqrt{2}} &
\overline{\mu}\frac{2h_3+i(h_2-h_1)}{\sqrt{2}} &
\frac{h_1h_3+5h_2h_3+i(3-2h_1h_2+2h_2^2)}{3}
\end{bmatrix}.$$
}
\end{ex}

\section{The real case}\label{sec:re}
Let $A\in\mathbb{R}^{n\times n}$. We define the \emph{real normal
defect of $A$}, $\rnd(A)$, as the minimal possible nonnegative
integer $p$
such that a matrix $\begin{bmatrix} A & *\\
* & *
\end{bmatrix}\in\mathbb{R}^{(n+p)\times (n+p)}$ is normal (such a matrix with the minimal possible $p$ is
a \emph{minimal real normal completion of $A$}). It is clear that
$\rnd(A)\ge\nd(A)$. We will show later (in Corollary \ref{nd_rnd})
that $\nd(A)=1$ if and only if $\rnd(A)=1$, while we leave open
the question whether $\rnd(A)=\nd(A)$ is always the case.

We also define the \emph{orthogonal defect of $A$} as the minimal
nonnegative integer $s$
such that a matrix $\begin{bmatrix} A & *\\
* & *
\end{bmatrix}\in\mathbb{R}^{(n+s)\times (n+s)}$ is a multiple of
an orthogonal matrix (such a matrix with the minimal possible $s$ is a
\emph{minimal orthogonal completion of $A$}). In fact, the
orthogonal defect of $A$ coincides with $\ud(A)$, so that it does not
require a separate notation. Indeed, since a minimal orthogonal completion of a real matrix is
obtained using the same construction as for a minimal unitary
completion (see \cite{W}), the only difference being that  the  real SVD
is involved, the size of this minimal orthogonal completion is the same
as for a minimal unitary completion.

It is clear that $\rnd(A)\le\ud(A)$, and we will show later, in Example
\ref{real_ex:a}, that there are orthogonally irreducible matrices $A$
such that the strict inequality takes place.

\subsection{Construction of real matrices of even size with real normal defect
one}\label{re_constr} The following theorem is an analogue of
Theorem \ref{nd1} for the case of real $n\times n $ matrices with
$n$ even.
\begin{thm}\label{rnd1}
Let $A\in\mathbb{R}^{n\times n}$, where $n=2k$, be not normal. The
following statements are equivalent:
\begin{itemize}
    \item[(i)] $\rnd(A)= 1$.
    \item[(ii)] There exist a contraction matrix $C\in\mathbb{R}^{n\times n}$ with $\ud(C)=1$, a block diagonal
    matrix $D\in\mathbb{R}^{n\times n}$ of the form
\begin{equation}\label{bd}
D=\diag\left(\begin{bmatrix} \alpha_1 & \beta_1\\
-\beta_1 & \alpha_1
\end{bmatrix},\ldots,\begin{bmatrix} \alpha_\ell & \beta_\ell\\
-\beta_\ell & \alpha_\ell
\end{bmatrix},\alpha_{2\ell+1},\ldots,\alpha_{2k}
 \right),
\end{equation}
     and a scalar $\mu\in\mathbb{R}$
    such that
    \begin{equation}\label{repr1_real}
    A=CDC^T+\mu I_n.
    \end{equation}
    \item[(iii)] There exist an orthogonal matrix $V\in\mathbb{R}^{n\times n}$,
    a normal matrix $N\in\mathbb{R}^{n\times n}$, and scalars
 $t,\mu\in\mathbb{R}$, with
    $0\le t<1$, such that
    \begin{equation}\label{repr2_real}
    V^TAV=MNM+\mu I_n,
    \end{equation}
where $M=\diag(1,\ldots,1,t)$.
\end{itemize}
\end{thm}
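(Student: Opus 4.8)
The plan is to follow the proof of Theorem \ref{nd1} essentially line by line, with unitary similarity and diagonalizability replaced by orthogonal similarity and the real block-diagonal normal form of a real normal matrix (see, e.g., \cite{HJ}): any real normal matrix is orthogonally similar to a block diagonal matrix whose blocks are either $2\times 2$ of the form $\begin{bmatrix}\alpha&\beta\\-\beta&\alpha\end{bmatrix}$ or $1\times 1$ real scalars. Two elementary observations will be used throughout: such a block diagonal matrix $D$ is itself normal (each $2\times 2$ block $B$ satisfies $B^TB=BB^T=(\alpha^2+\beta^2)I_2$), and replacing each $\alpha$ by $\alpha-\mu$ preserves the form \eqref{bd}.

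To prove (i)$\Longrightarrow$(ii), suppose $\rnd(A)=1$ and let $\begin{bmatrix} A & x\\ y^T & z\end{bmatrix}\in\mathbb{R}^{(n+1)\times(n+1)}$ be a minimal real normal completion of $A$. By the real normal form, and since $n+1=2k+1$ is \emph{odd}, this matrix is orthogonally similar to a block diagonal matrix with at least one $1\times 1$ block; after conjugating by a suitable permutation matrix we may assume it equals $\begin{bmatrix}\Lambda_0&0\\0&\mu\end{bmatrix}$ with $\Lambda_0\in\mathbb{R}^{n\times n}$ of the form \eqref{bd} and $\mu\in\mathbb{R}$. Thus there is an orthogonal $U=\begin{bmatrix}U_{11}&U_{12}\\U_{21}&U_{22}\end{bmatrix}$ with $\begin{bmatrix} A & x\\ y^T & z\end{bmatrix}=U\begin{bmatrix}\Lambda_0&0\\0&\mu\end{bmatrix}U^T$. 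Reading off the north-western block exactly as in \eqref{A}, and using $U_{11}U_{11}^T+U_{12}U_{12}^T=I_n$, gives $A=C(\Lambda_0-\mu I_n)C^T+\mu I_n$ with $C=U_{11}$, so \eqref{repr1_real} holds with $D=\Lambda_0-\mu I_n$, again of the form \eqref{bd}. It remains to check $\ud(C)=1$: from $U^TU=I_{n+1}$ we get $I_n-C^TC=U_{21}^TU_{21}$, a matrix of rank at most one, so $C$ has at most one singular value less than $1$ and, as $n\ge 2$, $\norm{C}=1$; hence $\ud(C)\le 1$, and $\ud(C)=0$ would force $U_{21}=0$, hence $U_{12}=0$, hence $U_{11}^TAU_{11}=\Lambda_0$ normal and $A$ normal, contradicting the hypothesis.

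For (ii)$\Longrightarrow$(i), put $\Lambda_0=D+\mu I_n$ (of the form \eqref{bd}, hence normal), so $A=C(\Lambda_0-\mu I_n)C^T+\mu I_n$; since $\ud(C)=1$ and the orthogonal defect coincides with $\ud$, $C$ has a minimal orthogonal completion $U=\begin{bmatrix}C&U_{12}\\U_{21}&U_{22}\end{bmatrix}\in\mathbb{R}^{(n+1)\times(n+1)}$. Defining $x,y,z$ by the real analogues of \eqref{x}--\eqref{z}, the orthogonality relations of $U$ give $\begin{bmatrix} A & x\\ y^T & z\end{bmatrix}=U\begin{bmatrix}\Lambda_0&0\\0&\mu\end{bmatrix}U^T$, a real normal completion of $A$; since $A$ is not normal, $\rnd(A)=1$. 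For (ii)$\Longleftrightarrow$(iii): if (ii) holds, take the real SVD $C=V\Sigma W^T$ with $V,W$ orthogonal; because $\ud(C)=1$ we have $\Sigma=\sigma M$ with $M=\diag(1,\dots,1,t)$ and $0\le t<1$, so $N:=\sigma^2 W^TDW$ is real and normal and $V^TAV=MNM+\mu I_n$. Conversely, writing a real normal $N$ as $N=WDW^T$ with $W$ orthogonal and $D$ of the form \eqref{bd}, and setting $C:=VMW$ (a contraction with the same singular values as $M$, so $\ud(C)=1$), one obtains $A=V(MNM+\mu I_n)V^T=CDC^T+\mu I_n$.

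The delicate point is the reduction in (i)$\Longrightarrow$(ii): it succeeds only because $n+1$ is odd, which guarantees the presence of a $1\times 1$ real block that can be split off as the scalar $\mu$. When $n$ is odd, $n+1$ is even and a real normal completion may consist solely of $2\times 2$ blocks, so no real scalar can be peeled off in this manner; this is exactly why the odd case is excluded here and left as an open problem.
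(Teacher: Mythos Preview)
Your proof is correct and follows essentially the same route as the paper's: the orthogonal block-diagonalization of the real normal completion, the observation that $n+1$ odd forces a $1\times1$ block to split off as $\mu$, the extraction of $C=U_{11}$ and $D=\Lambda_0-\mu I_n$, and the SVD passage between (ii) and (iii) all match the paper's argument. If anything, you are slightly more careful than the paper: you explicitly verify $\ud(C)=1$ (via $I_n-C^TC=U_{21}^TU_{21}$ and the observation that $\ud(C)=0$ would force $A$ normal), and in (ii)$\Rightarrow$(iii) you absorb the factor $\sigma^2=\norm{C}^2$ into $N$ rather than tacitly assuming $\norm{C}=1$ as the paper does.
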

\begin{proof}
(i)$\Longleftrightarrow$(ii) Let $\rnd(A)= 1$, and let
$\begin{bmatrix} A & x\\
y^T & z
\end{bmatrix}\in\mathbb{R}^{(n+1)\times(n+1)}$ be a minimal real normal
completion of $A$. Then (see, e.g., \cite[Section IX.13]{G}) there
exist a block diagonal matrix $\Lambda\in\mathbb{R}^{n\times n}$
of the form
\begin{equation*}
\Lambda=\diag\left(\begin{bmatrix} \mu_1 & \beta_1\\
-\beta_1 & \mu_1
\end{bmatrix},\ldots,\begin{bmatrix} \mu_\ell & \beta_\ell\\
-\beta_\ell & \mu_\ell
\end{bmatrix},\mu_{2\ell+1},\ldots,\mu_{2k}
 \right),
\end{equation*}
 a scalar $\mu\in\mathbb{R}$,
and an orthogonal matrix $O=\begin{bmatrix} O_{11} & O_{12}\\
O_{21} & O_{22}
\end{bmatrix}\in\mathbb{R}^{(n+1)\times(n+1)}$ such that
\begin{equation}\label{diag_real}
\begin{bmatrix} A & x\\
y^T & z
\end{bmatrix}=\begin{bmatrix} O_{11} & O_{12}\\
O_{21} & O_{22}
\end{bmatrix}\begin{bmatrix} \Lambda & 0\\
0 & \mu
\end{bmatrix}\begin{bmatrix} O_{11}^T & O_{21}^T\\
O_{12}^T & O_{22}
\end{bmatrix}.
\end{equation}
Here we used the fact that $n$ is even, and thus the
$(n+1)\times(n+1)$ real normal matrix $\begin{bmatrix} A & x\\
y^T & z
\end{bmatrix}$ has at least one real eigenvalue.
 The last equality is
equivalent to the following system:
\begin{eqnarray}
A &= & O_{11}\Lambda O_{11}^T+\mu
O_{12}O_{12}^T=O_{11}(\Lambda-\mu
I_n) O_{11}^T+\mu I_n, \label{A_real}\\
x &=& O_{11}\Lambda O_{21}^T+\mu O_{12}O_{22}=O_{11}(\Lambda-\mu
I_n) O_{21}^T, \label{x_real}\\
y^T &=& O_{21}\Lambda O_{11}^T+\mu
{O}_{22}O_{12}^T=O_{21}(\Lambda-\mu I_n) O_{11}^T, \label{y_real}\\
z &= & O_{21}\Lambda O_{21}^T+\mu {O}_{22}^2=O_{21}(\Lambda-\mu
I_n) O_{21}^T+\mu.\label{z_real}
\end{eqnarray}
Setting $C=O_{11}$ and $D=\Lambda-\mu I_n$, we obtain
\eqref{repr1_real} from \eqref{A_real}.

Conversely, if \eqref{repr1_real} holds, we set $O_{11}=C$,
$\Lambda=D+\mu I_n$ and obtain \eqref{A_real}. For $O=\begin{bmatrix} O_{11} & O_{12}\\
O_{21} & O_{22}
\end{bmatrix}$ a minimal orthogonal completion of $C$, we define
$x,y\in\mathbb{R}^n$ and $z\in\mathbb{R}$ by
\eqref{x_real}--\eqref{z_real}.  Then \eqref{diag_real} holds, i.e., the
matrix $\begin{bmatrix} A & x\\
y^T & z
\end{bmatrix}\in\mathbb{R}^{(n+1)\times(n+1)}$ is a real normal
completion of $A$, and thus $\rnd(A)= 1$.

(ii)$\Longleftrightarrow$(iii) If (ii) holds, let
$C=V\diag(1,\ldots,1,t)W^T$ be the SVD of $C$ (here
$V,W\in\mathbb{R}^{n\times n}$ are orthogonal, $0\le t<1$, and
$M=\diag(1,\ldots,1,t)\in\mathbb{R}^{n\times n}$). Clearly,
$N=W^TDW$ is normal, and \eqref{repr2_real} follows.

Conversely, if \eqref{repr2_real} holds, then $N=W^TDW$ with $D$
block diagonal of the form \eqref{bd} and $W$ orthogonal.  For
$C=V\diag(1,\ldots,1,t)W^T$ we have $\ud(C)=1$, and
\eqref{repr1_real} follows.
\end{proof}
\begin{rem}\label{re_not_normal}
\rm{Remark \ref{not_normal} can be restated in the real case as
follows. The matrix $A$ of even size, given by \eqref{repr2_real},
is not normal if and only if in the matrix $N$ partitioned as
$$N=\begin{bmatrix}
N_0 & g\\
h^T & \alpha
\end{bmatrix}$$
(where $\alpha$ is scalar) $g\neq h$ and, in the case where $t\alpha=
0$, also $g\neq -h$. Moreover, if both $M$ and $N$ are invertible then
$A$ is not normal if and only if the standard basis vector $e_n$ is not
an eigenvector of $N^TN^{-1}$. The statement in the last sentence of
Remark \ref{not_normal} is, in general, not valid in the real case. }
\end{rem}
\medskip

\textbf{Open problem:} What is an analogue of Theorem \ref{rnd1}
for the case where $n$ is odd?

\medskip

In the case of even $n$, similarly to the complex case, representation
\eqref{repr1_real} or \eqref{repr2_real} in Theorem \ref{rnd1}, along
with Remark \ref{re_not_normal}, allow one to construct all matrices
$A$ with $\rnd(A)= 1$. However, this does not give a way to check
whether a given real matrix has real normal defect one. A procedure
for this is our further goal.

\subsection{Identification of matrices  with rnd$\mathbf{(A)=1}$ and
construction of their minimal real normal
completions}\label{re_shorter} In the following theorem, which is
the real counterpart of Theorem \ref{thm1}, we establish the
necessary and sufficient conditions for a matrix
$A\in\mathbb{R}^{n\times n}$ to have real normal defect one, and
for any $A$ with $\rnd(A)=1$ we describe all its minimal real
normal completions.

\begin{thm}\label{thm2}
Let $A\in\mathbb{R}^{n\times n}$. Then
\begin{itemize}
    \item[(i)] $\rnd(A)=1$ if and only if $\rank(A^TA-AA^T)=2$ and
  at least one of the equations
    \begin{equation}\label{eq1_r}
(PA^Tu_1-PAu_2)x_1+(PA^Tu_2-PAu_1)x_2=0,
\end{equation}
 \begin{equation}\label{eq2_r}
(PA^Tu_1+PAu_2)x_1+(PA^Tu_2+PAu_1)x_2=0,
\end{equation}
has a solution pair $x_1,x_2\in\mathbb{R}$ satisfying
\begin{equation}\label{cond_r}
x_1^2-x_2^2=d.
\end{equation}\end{itemize}
Here $u_1,u_2\in\mathbb{R}^n$ are the unit eigenvectors of the
matrix $A^TA-AA^T$ corresponding to its nonzero eigenvalues
$\lambda_1=d (>0)$ and $\lambda_2=-d$, and
\begin{equation}\label{P_r}
P=I_n-u_1u_1^T-u_2u_2^T
\end{equation}
 is the orthogonal projection of
$\mathbb{R}^n$ onto $\Null(A^TA-AA^T)$. \begin{itemize}    \item[(ii)]
If $\rnd(A)=1$ then at least one of the following cases occurs:

\textbf{Case 1.} If $x_1$ and $x_2$ satisfy \eqref{eq1_r} and
\eqref{cond_r} then the matrix
\begin{equation}\label{rnc1}
B_1=\begin{bmatrix} A & x_1u_1+x_2u_2\\
{x}_2u_1^T+{x}_1u_2^T & z
\end{bmatrix}
\end{equation}
is a minimal real normal completion of $A$. Here
\begin{equation}\label{z_def_r1}
z=a_{11}-\frac{1}{d}(x_1+x_2)(a_{12}x_1-a_{21}x_2)
\end{equation}
and
\begin{equation}\label{a's_r}
a_{11}=u_1^TAu_1,\quad a_{12}=u_1^TAu_2,\quad a_{21}=u_2^TAu_1;
\end{equation}

\textbf{Case 2.} If $x_1$ and $x_2$ satisfy \eqref{eq2_r} and
\eqref{cond_r} then the matrix
\begin{equation}\label{rnc2}
B_2=\begin{bmatrix} A & x_1u_1+x_2u_2\\
-{x}_2u_1^T-{x}_1u_2^T & z
\end{bmatrix}
\end{equation}
is a minimal real normal completion of $A$. Here
\begin{equation}\label{z_def_r2}
z=a_{11}+\frac{1}{d}(x_1-x_2)(a_{12}x_1+a_{21}x_2)
\end{equation}
and $a_{ij}$'s are defined by \eqref{a's_r}.
\end{itemize}
Any minimal real normal completion of $A$ arises in this way, i.e.,
either as in Case~1 or as in Case~2 above.
\end{thm}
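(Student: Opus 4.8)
The plan is to follow the blueprint of the complex case (Theorem~\ref{thm1}) with real data throughout, the one genuinely new feature being that the relation between the two ``border vectors'' $x$ and $y$ now carries a discrete rather than a continuous ambiguity, which is exactly what produces the split into Case~1 and Case~2. First I would prove the real analogue of Lemma~\ref{nec_suf_cond}: $\rnd(A)=1$ if and only if there are linearly independent $x,y\in\mathbb{R}^n$ and $z\in\mathbb{R}$ with $A^TA-AA^T=xx^T-yy^T$ and $(A-zI_n)^Tx=(A-zI_n)y$. This is obtained by expanding $B^TB=BB^T$ for $B=\begin{bmatrix}A&x\\ y^T&z\end{bmatrix}\in\mathbb{R}^{(n+1)\times(n+1)}$ block-wise; the scalar identity $x^Tx=y^Ty$ is redundant because $\trace(A^TA-AA^T)=0$, and linear independence of $x,y$ is equivalent to $A$ not being normal. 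In particular $\rnd(A)=1$ forces $\rank(A^TA-AA^T)=2$, the real counterpart of Corollary~\ref{nec_cond}, so from now on the rank condition \eqref{rank_cond} (with $A^*$ replaced by $A^T$) may be assumed and $u_1,u_2,d$ are as in the statement.

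Next I would reduce to a canonical form by a real orthogonal change of basis, which is legitimate because $A^TA-AA^T$ is symmetric and hence real orthogonally diagonalizable. Pick an orthonormal eigenbasis $u_1,\dots,u_n\in\mathbb{R}^n$ of $A^TA-AA^T$ (with $u_1,u_2$ as above), set $U=[\,u_1\ \cdots\ u_n\,]$ and $U'=[\,u_3\ \cdots\ u_n\,]$, so that $U'U'^T=P$ and $\widetilde A:=U^TAU$ satisfies $\widetilde A^T\widetilde A-\widetilde A\widetilde A^T=\diag(d,-d,0,\dots,0)$. The real version of Lemma~\ref{other_nec} then holds by the same trace computations (with $T$ in place of $*$): $\widetilde A=\begin{bmatrix}a_{11}&a_{12}&u\\ a_{21}&a_{11}&v\\ w^T&q^T&S\end{bmatrix}$ with real $u,v,w,q\in\mathbb{R}^{n-2}$ obeying $uu^T=qq^T$, $vv^T=ww^T$, $uv^T=wq^T$, $uw^T=vq^T$, and $a_{ij}$ as in \eqref{a's_r}, \eqref{vects} (with $u_i^T$ for $u_i^*$). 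Since $U$ is orthogonal, $\rnd(A)=\rnd(\widetilde A)$ and minimal real normal completions of the two are related by conjugation with $\diag(U,1)$, so it suffices to work with $\widetilde A$.

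The crux is the analysis of the border vectors. Arguing as in Lemma~\ref{xy}, from $xx^T-yy^T=\diag(d,-d,0,\dots,0)$ one gets $x=\col(x_1,x_2,0,\dots,0)$, $y=\col(y_1,y_2,0,\dots,0)$ with $x_1^2-x_2^2=d=y_2^2-y_1^2$ and $x_1x_2=y_1y_2$. Over $\mathbb{R}$ these equations have exactly two solution branches, $(y_1,y_2)=(x_2,x_1)$ and $(y_1,y_2)=(-x_2,-x_1)$; this is the real replacement for the circle $e^{i\theta}$ in \eqref{y_via_x}, and, unlike in the complex case, there is no continuous symmetry available to normalize it away (conjugating by $\diag(I_n,-1)$ only sends $(x,y)\mapsto(-x,-y)$ within the same branch), which is precisely why the statement splits. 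On the first branch, writing out $(A-zI_n)^Tx=(A-zI_n)y$ for $\widetilde A$: the first two scalar components give two linear equations in $z$ which are consistent (using $a_{22}=a_{11}$) and solve to \eqref{z_def_r1}, while the bottom $\mathbb{R}^{n-2}$-block reads $u^Tx_1+v^Tx_2=w^Tx_2+q^Tx_1$; multiplying on the left by $U'$ and using $U'U'^T=P$ together with \eqref{vects} turns this into $(PA^Tu_1-PAu_2)x_1+(PA^Tu_2-PAu_1)x_2=0$, i.e.\ \eqref{eq1_r}, and reassembling the data reconstructs \eqref{rnc1}. The second branch produces \eqref{eq2_r}, \eqref{z_def_r2}, and \eqref{rnc2} in the same way. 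Conversely, given a solution of \eqref{eq1_r} (resp.\ \eqref{eq2_r}) satisfying \eqref{cond_r}, running the computation backwards and invoking the identities $uu^T=qq^T$, $uv^T=wq^T$, $\dots$ from the real Lemma~\ref{other_nec} shows the corresponding $B_1$ (resp.\ $B_2$) satisfies the pair of identities of the real Lemma~\ref{nec_suf_cond}, hence is a real normal completion, which is minimal since $d>0$ makes $x,y$ linearly independent.

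Finally, by the real Lemma~\ref{nec_suf_cond} every minimal real normal completion of $\widetilde A$ has its border vectors on one of the two branches, hence is a $B_1$ or a $B_2$; transporting back by $\diag(U,1)$ gives the last assertion of the theorem. The main obstacle I expect is bookkeeping rather than conceptual: one must carry the two branches in parallel through the component analysis of $(A-zI_n)^Tx=(A-zI_n)y$, check in each branch that the two scalar equations for $z$ are consistent, and verify that $B_1$ and $B_2$ are genuinely normal — this is where the quadratic relations among $u,v,w,q$ from the real Lemma~\ref{other_nec} are consumed and where the signs must be tracked with care.
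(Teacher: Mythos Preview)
Your proposal is correct and follows essentially the same route as the paper's own proof: reduce to the canonical form $\widetilde A=U^TAU$ via a real orthogonal eigenbasis of $A^TA-AA^T$, read off the block equations for normality of the bordered matrix, observe that the real constraints $x_1^2-x_2^2=d=y_2^2-y_1^2$ and $x_1x_2=y_1y_2$ force the discrete dichotomy $(y_1,y_2)=\pm(x_2,x_1)$, and then treat the two branches separately to obtain \eqref{eq1_r}--\eqref{z_def_r1} and \eqref{eq2_r}--\eqref{z_def_r2}. One minor remark: the quadratic relations $uu^T=qq^T$, $uv^T=wq^T$, etc.\ from the real Lemma~\ref{other_nec} are not actually needed for the converse direction here (only $a_{22}=a_{11}$ is, to see that the two scalar equations for $z$ coincide); the paper reserves those relations for the generic-case analysis in Section~\ref{re_finer}.
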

\begin{proof}
Since $\rnd(A)=1$ implies $\nd(A)=1$, it follows from Corollary
\ref{nec_cond} that the rank condition is necessary for $\rnd(A)=1$.
For real $A$ it takes the form $\rank(A^TA-AA^T)=2$. Clearly, it is also
equivalent to
\begin{equation}\label{rank_cond_r}
\det(A^TA-AA^T-\lambda I_n)=(-1)^n\lambda^{n-2}(\lambda^2-d^2),
\end{equation}
with some $d>0$.

Without loss of generality, we can assume that the rank condition is
satisfied. Then we find the unit eigenvectors
$u_1,u_2\in\mathbb{R}^n$ of the matrix $A^TA-AA^T$ corresponding
to its eigenvalues $\lambda_1=d$ and $\lambda_2=-d$.
 Let $u_3$, \ldots, $u_n$ be  an orthonormal basis for $\Null(A^TA-AA^T)$.
Then $U^{\prime}=\begin{bmatrix} u_3 & \ldots, u_n
\end{bmatrix}\in\mathbb{R}^{n\times (n-2)}$ is an isometry, and
\begin{equation}\label{real_U'}
U^\prime U^{\prime T}=P,
\end{equation}
where $P$ is defined in \eqref{P_r}. The matrix $\widetilde{A}=U^TAU$,
where $U=\begin{bmatrix} u_1 & \ldots & u_n\end{bmatrix}$ is
orthogonal, has the form
\begin{equation}\label{A_form_real}
\widetilde{A}=\begin{bmatrix} a_{11} & a_{12} & u\\
a_{21} & a_{11} & v\\
w^T & q^T & S
\end{bmatrix}
\end{equation}
(the identity $a_{11}=u_1^TAu_1=u_2^TAu_2$ is established in the
same way as in Lemma \ref{other_nec}). As in Lemma \ref{xy}, we
obtain that $\rnd(\widetilde{A})=1$ (and therefore, $\rnd(A)=1$)
if and only if there exist $x_1,x_2,y_1,y_2,z\in\mathbb{R}$ such
that the matrix
\begin{equation}\label{real_compl}
\widetilde{B}=\begin{bmatrix}
a_{11} & a_{12} & u & x_1\\
a_{21} & a_{11} & v & x_2\\
w^T & q^T & S & 0\\
y_1 & y_2 & 0 & z
\end{bmatrix}\in\mathbb{R}^{(n+1)\times (n+1)}
\end{equation}
 is normal if and only if there exist
$x_1,x_2,y_1,y_2,z\in\mathbb{R}$ such that
\begin{eqnarray}
(a_{11}-z)x_1+a_{21}x_2 &=&
(a_{11}-z)y_1+a_{12}y_2,\label{real_id1}\\
a_{12}x_1+(a_{11}-z)x_2 &=&
a_{21}y_1+(a_{11}-z)y_2,\label{real_id2}\\
u^Tx_1+v^Tx_2 &=& w^Ty_1+q^Ty_2,\label{real_id3}\\
x_1^2-y_1^2 &=& y_2^2-x_2^2=d,\label{real_id4}\\
x_1x_2 &=& y_1y_2.\label{real_id5}
\end{eqnarray}
It follows from \eqref{real_id4} and \eqref{real_id5} that either
$y_1=x_2$, $y_2=x_1$ or $y_1=-x_2$, $y_2=-x_1$. We will consider
these two cases separately.

\textbf{Case 1: $y_1=x_2$, $y_2=x_1$.} Identities
\eqref{real_id1}--\eqref{real_id4} become
\begin{eqnarray}
(a_{11}-z)x_1+a_{21}x_2 &=&
(a_{11}-z)x_2+a_{12}x_1,\label{real_id1'}\\
a_{12}x_1+(a_{11}-z)x_2 &=&
a_{21}x_2+(a_{11}-z)x_1,\label{real_id2'}\\
(u^T-q^T)x_1+(v^T-w^T)x_2 &=& 0,\label{real_id3'}\\
x_1^2-x_2^2 &=& d.\label{real_id4'}
\end{eqnarray}
Clearly, \eqref{real_id1'} and \eqref{real_id2'} are equivalent,
and it follows from \eqref{real_id1'} and \eqref{real_id4'} that
\begin{equation*}
z=a_{11}-\frac{a_{12}x_1-a_{21}x_2}{x_1-x_2}=a_{11}-\frac{1}{d}(x_1+x_2)(a_{12}x_1-a_{21}x_2)
\end{equation*}
(cf. \eqref{z_def}). Next, it follows from \eqref{A_form_real}
that
\begin{equation*}
u^T=U^{\prime T}A^Tu_1,\quad v^T=U^{\prime T}A^Tu_2,\quad
w^T=U^{\prime T}Au_1,\quad  q^T=U^{\prime T}Au_2.
\end{equation*}
Multiplying both parts of these equalities on the left by
$U^\prime$  and taking into account \eqref{real_U'}, we obtain
vectors
\begin{eqnarray}
\mathbf{u}^T =U^\prime u^T=PA^Tu_1, & \mathbf{v}^T=U^\prime
v^T=PA^Tu_2,\label{hat1}\\
\mathbf{w}^T=U^\prime w^T=PAu_1, & \mathbf{q}^T=U^\prime
q^T=PAu_2.\label{hat2}
\end{eqnarray}
Since $U^\prime$ is an isometry, \eqref{real_id3'} is equivalent
to
\begin{equation}\label{real_main}
(\mathbf{u}^T-\mathbf{q}^T)x_1+(\mathbf{v}^T-\mathbf{w}^T)x_2 = 0.
\end{equation}
Note that the definition of vectors
$\mathbf{u}^T,\mathbf{v}^T,\mathbf{w}^T,\mathbf{q}^T\in\range(P)\subset\mathbb{R}^n$
in \eqref{hat1}--\eqref{hat2} is independent of $U^\prime$, i.e.,
of the choice of basis vectors $u_3$, \ldots, $u_n$ in $\range
(P)=\Null(A^TA-AA^T)$.

\textbf{Case 2: $y_1=-x_2$, $y_2=-x_1$.} Identities
\eqref{real_id1}--\eqref{real_id4} become
\begin{eqnarray}
(a_{11}-z)x_1+a_{21}x_2 &=&
-(a_{11}-z)x_2-a_{12}x_1,\label{real_id1''}\\
a_{12}x_1+(a_{11}-z)x_2 &=&
-a_{21}x_2-(a_{11}-z)x_1,\label{real_id2''}\\
(u^T+q^T)x_1+(v^T+w^T)x_2 &=& 0,\label{real_id3''}\\
x_1^2-x_2^2 &=& d.\label{real_id4''}
\end{eqnarray}
Clearly, \eqref{real_id1''} and \eqref{real_id2''} are equivalent,
and it follows from \eqref{real_id1''} and \eqref{real_id4''} that
\begin{equation*}
z=a_{11}+\frac{a_{12}x_1+a_{21}x_2}{x_1+x_2}=a_{11}+\frac{1}{d}(x_1-x_2)(a_{12}x_1+a_{21}x_2).
\end{equation*}
 Next, we obtain vectors $\mathbf{u}^T,\mathbf{v}^T,\mathbf{w}^T,\mathbf{q}^T\in\range(P)\subset\mathbb{R}^n$
 as in Case 1.
Since $U^\prime$ is an isometry, \eqref{real_id3''} is equivalent
to
\begin{equation}\label{real_main''}
(\mathbf{u}^T+\mathbf{q}^T)x_1+(\mathbf{v}^T+\mathbf{w}^T)x_2 = 0.
\end{equation}

It follows from the analysis of cases above that $\rnd(A)=1$ if
and only if at least one of the equations \eqref{real_main} and
\eqref{real_main''} has a solution pair $x_1,x_2\in\mathbb{R}$
satisfying \eqref{cond_r}, which proves part (i).

If $x_1,x_2$ satisfy \eqref{real_main} (resp.,
\eqref{real_main''}) and \eqref{cond_r} then $y_1=x_2$, $y_2=x_1$
and $z$ defined by \eqref{z_def_r1} (resp., $y_1=-x_2$, $y_2=-x_1$
and $z$ defined by \eqref{z_def_r2}) determine the minimal real
normal completion $\widetilde{B}$ of the matrix $\widetilde{A}$ by
\eqref{real_compl}. Then the matrix
\begin{equation*}
B =\begin{bmatrix}
U & 0\\
0  & 1
\end{bmatrix}\begin{bmatrix}
a_{11} & a_{12} & u & x_1\\
a_{21} & a_{11} & v & x_2\\
w^T & q^T & S & 0\\
y_1 & y_2 & 0 & z
\end{bmatrix}\begin{bmatrix}
U^T & 0\\
0 & 1
\end{bmatrix}
= \begin{bmatrix}
A & u_1x_1+u_2x_2\\
{y}_1u_1^T+{y}_2u_2^T & z\\
\end{bmatrix}
\end{equation*}
 is a minimal real normal completion of $A$. Since $B=B_1$ in Case
 1 and $B=B_2$ in Case 2, this proves part (ii) of the theorem.
\end{proof}
\begin{cor}\label{nd_rnd}
For a matrix $A\in\mathbb{R}^{n\times n}$, $\rnd(A)=1$ if and only
if $\nd(A)=1$.
\end{cor}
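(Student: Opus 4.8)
The plan is to read off $\nd(A)=1$ and $\rnd(A)=1$ from the characterizations in Theorems \ref{thm1} and \ref{thm2}, the link being that, for a real $A$, splitting the complex equation \eqref{eq} into its real and imaginary parts produces exactly the two real equations \eqref{eq1_r} and \eqref{eq2_r}. The implication $\rnd(A)=1\Rightarrow\nd(A)=1$ is immediate: one always has $\nd(A)\le\rnd(A)$, so $\nd(A)\le1$, while $A$ is not normal (otherwise $\rnd(A)=0$), hence $\nd(A)=1$.

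For the converse, assume $A\in\mathbb{R}^{n\times n}$ with $\nd(A)=1$. By Theorem \ref{thm1}, $\rank(A^TA-AA^T)=2$ and \eqref{eq} has a solution pair $x_1,x_2\in\mathbb{C}$ with $|x_1|^2-|x_2|^2=d$. Since $A$ is real, $A^TA-AA^T$ is real symmetric, so we may take its unit eigenvectors $u_1,u_2$ for $\lambda_1=d$ and $\lambda_2=-d$ to lie in $\mathbb{R}^n$ (the characterization of Theorem~\ref{thm1} holds for any choice of unit eigenvectors, so this entails no loss); then $P$ from \eqref{P}, which is the projection appearing in \eqref{P_r}, is real, and so are the vectors $PA^Tu_1$, $PA^Tu_2$, $PAu_1$, $PAu_2$. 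Writing $x_j=r_j+is_j$ with $r_j,s_j\in\mathbb{R}$, the real part of \eqref{eq} reads
\[
r_1(PA^Tu_1-PAu_2)+r_2(PA^Tu_2-PAu_1)=0,
\]
that is, equation \eqref{eq1_r} at $(x_1,x_2)=(r_1,r_2)$, and its imaginary part reads
\[
s_1(PA^Tu_1+PAu_2)+s_2(PA^Tu_2+PAu_1)=0,
\]
that is, equation \eqref{eq2_r} at $(x_1,x_2)=(s_1,s_2)$, while the condition $|x_1|^2-|x_2|^2=d$ becomes $(r_1^2-r_2^2)+(s_1^2-s_2^2)=d$.

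It remains to rescale. Since $(r_1^2-r_2^2)+(s_1^2-s_2^2)=d>0$, at least one of $r_1^2-r_2^2$, $s_1^2-s_2^2$ is strictly positive. If $r_1^2-r_2^2>0$, then $(r_1,r_2)\ne(0,0)$, and replacing it by $\lambda(r_1,r_2)$ with $\lambda=\sqrt{d/(r_1^2-r_2^2)}$ gives a real solution of the homogeneous equation \eqref{eq1_r} that also satisfies \eqref{cond_r}; by Theorem \ref{thm2}(i), $\rnd(A)=1$, with a minimal real normal completion as in Case~1. If instead $s_1^2-s_2^2>0$, the analogous rescaling of $(s_1,s_2)$ turns it into a real solution of \eqref{eq2_r} satisfying \eqref{cond_r}, so again $\rnd(A)=1$, now via Case~2. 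Either way $\rnd(A)=1$, which completes the proof. The only step needing attention is the bookkeeping in separating \eqref{eq} into its real and imaginary parts and matching them against \eqref{eq1_r} and \eqref{eq2_r}; the rescaling is routine, and I do not foresee a genuine obstacle.
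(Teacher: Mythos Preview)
Your proof is correct and follows essentially the same approach as the paper's: both split the complex equation \eqref{eq} into its real and imaginary parts (the paper via the $\mathbf{Q}$-matrix reformulation \eqref{short}, you directly), observe that these coincide with \eqref{eq1_r} and \eqref{eq2_r}, and then use the dichotomy that at least one of $r_1^2-r_2^2$ or $s_1^2-s_2^2$ must be positive to rescale to a solution of \eqref{cond_r}. The bookkeeping in your real/imaginary split is correct.
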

\begin{proof}
Since we have $\nd(A)\le\rnd(A)$, it suffices to prove that if
$\nd(A)=1$ then $\rnd(A)=1$. Suppose that $\nd(A)=1$. Then, as
described in Section \ref{shorter},  equation \eqref{short} has a
solution $x=\col(x_{R1},x_{I1},x_{R2},x_{I2})\in\mathbb{R}^4$ with
\begin{equation}\label{ineq}
x_{R1}^2+x_{I1}^2>x_{R2}^2+x_{I2}^2
\end{equation}
 (see Theorem \ref{thm1}).
 The matrix
$\mathbf{Q}$ in this (real) case has the form
 $$\mathbf{Q}=\begin{bmatrix}
\mathbf{u}^T-\mathbf{q}^T & 0 & \mathbf{v}^T-\mathbf{w}^T
& 0\\
0 & \mathbf{u}^T+\mathbf{q}^T & 0 & \mathbf{v}^T+\mathbf{w}^T
\end{bmatrix}.$$
Thus, in this case \eqref{short} is equivalent to the pair of equations
\begin{eqnarray*}
(\mathbf{u}^T-\mathbf{q}^T)x_{R1}+(\mathbf{v}^T-\mathbf{w}^T)x_{R2} &=& 0,\\
(\mathbf{u}^T+\mathbf{q}^T)x_{I1}+(\mathbf{v}^T+\mathbf{w}^T)x_{I2}
&=& 0.
\end{eqnarray*}
Since in \eqref{ineq} either $x_{R1}^2>x_{R2}^2$ or
$x_{I1}^2>x_{I2}^2$, at least one of the equations
\eqref{real_main} or \eqref{real_main''} (or equivalently, at
least one of the equations \eqref{eq1_r} and \eqref{eq2_r}) has a
solution pair $x_1,x_2\in\mathbb{R}$ with $x_1^2>x_2^2$ (and thus,
a solution pair $x_1,x_2\in\mathbb{R}$ satisfying
$x_1^2-x_2^2=d$), which by Theorem \ref{thm2} means that
$\rnd(A)=1$.
\end{proof}
\medskip

\textbf{Open problem:} Is it true that for any
$A\in\mathbb{R}^{n\times n}$ one has $\rnd(A)=\nd(A)$?

\medskip

As in the complex case, we will describe now a procedure (in this
setting based on Theorem \ref{thm2}) which allows one to determine
whether $\rnd(A)=1$ for a given matrix $A\in\mathbb{R}^{n\times n}$.
Moreover, this
procedure allows one to find all such solutions, and then, applying
part (ii) of Theorem \ref{thm2}, all minimal normal completions of $A$
when $\rnd(A)=1$.
\medskip

\begin{center}{\textbf{The procedure}}
\end{center}

\medskip

\noindent\textbf{Begin}
\medskip

\textbf{Step 1.} Check the rank condition. If it
holds --- go to Step 2. Otherwise, stop: $\rnd(A)>1$.

\textbf{Step 2.} Write \eqref{eq1_r} in the form \eqref{real_main},
where $\mathbf{u}$, $\mathbf{v}$, $\mathbf{w}$, $\mathbf{q}$ are
defined by \eqref{hat1} and \eqref{hat2} (see
 Theorem \ref{thm2} for the definition of $P$, $u_1$, and $u_2$). Find
$m_1=\rank(\mathbf{u}^T-\mathbf{q}^T,\mathbf{v}^T-\mathbf{w}^T)$.

\textbf{Step 3.} Depending on $m_1$, consider the following cases.

\begin{itemize}
\item[\textbf{(1a)}] $m_1=0$. In this case, any
$x_1,x_2\in\mathbb{R}$ with $x_1^2-x_2^2=d$ solve
\eqref{real_main}.

\item[\textbf{(1b)}] $m_1=1$, i.e.,
    $\mathbf{u}^T-\mathbf{q}^T=\alpha b$,
    $\mathbf{v}^T-\mathbf{w}^T=\beta b$, with some nonzero vector
    $b\in\range(P)$ and  $\alpha,\beta\in\mathbb{R}$, and
    additionally $|\alpha|\ge|\beta|$. In this case, \eqref{real_main} is
    equivalent to $\alpha x_1+\beta x_2=0$, and has no solutions
    satisfying \eqref{cond_r}.

\item[\textbf{(1c)}] $m_1=1$, i.e.,
    $\mathbf{u}^T-\mathbf{q}^T=\alpha b$,
    $\mathbf{v}^T-\mathbf{w}^T=\beta b$, with some nonzero vector
    $b\in\range(P)$ and  $\alpha,\beta\in\mathbb{R}$, and
    additionally $|\alpha|<|\beta|$. In this case, \eqref{real_main} is
    equivalent to $\alpha x_1+\beta x_2=0$, and has the solutions
\begin{equation*}
x_1=\pm\beta\sqrt{\frac{d}{\beta^2-\alpha^2}},\quad
x_2=\mp\alpha\sqrt{\frac{d}{\beta^2-\alpha^2}}
\end{equation*}
satisfying \eqref{cond_r}.

\item[\textbf{(1d)}] $m_1=2$. In this case, \eqref{real_main} has no
    nonzero solutions, and thus has no solutions satisfying
    \eqref{cond_r}.
\end{itemize}

\textbf{Step 4.} Write \eqref{eq2_r} in the form \eqref{real_main''},
where $\mathbf{u}$, $\mathbf{v}$, $\mathbf{w}$, $\mathbf{q}$ are
defined by \eqref{hat1} and \eqref{hat2} (see  Theorem \ref{thm2} for
the definition of $P$, $u_1$, and $u_2$). Find
$m_2=\rank(\mathbf{u}^T+\mathbf{q}^T,\mathbf{v}^T+\mathbf{w}^T)$.

\textbf{Step 5.} Depending on $m_2$, consider the following cases.

\begin{itemize}

\item[\textbf{(2a)}] $m_2=0$. In this case, any
$x_1,x_2\in\mathbb{R}$ with $x_1^2-x_2^2=d$ solve
\eqref{real_main''}.

\item[\textbf{(2b)}] $m_2=1$, i.e.,
    $\mathbf{u}^T+\mathbf{q}^T=\gamma h$,
    $\mathbf{v}^T+\mathbf{w}^T=\delta h$, with some nonzero
    vector $h\in\range(P)$ and  $\gamma,\delta\in\mathbb{R}$, and
    additionally $|\gamma|\ge|\delta|$. In this case,
    \eqref{real_main''} is equivalent to $\gamma x_1+\delta x_2=0$,
    and has no solutions satisfying \eqref{cond_r}.

\item[\textbf{(2c)}] $m_2=1$, i.e.,
    $\mathbf{u}^T+\mathbf{q}^T=\gamma h$,
    $\mathbf{v}^T+\mathbf{w}^T=\delta h$, with some nonzero
    vector $h\in\range(P)$ and  $\gamma,\delta\in\mathbb{R}$, and
    additionally $|\gamma|<|\delta|$. In this case, \eqref{real_main''}
    is equivalent to $\gamma x_1+\delta x_2=0$, and has the
    solutions
\begin{equation*}
x_1=\pm\delta\sqrt{\frac{d}{\delta^2-\gamma^2}},\quad
x_2=\mp\gamma\sqrt{\frac{d}{\delta^2-\gamma^2}}
\end{equation*}
satisfying \eqref{cond_r}.

\item[\textbf{(2d)}] $m_2=2$. In this case, \eqref{real_main''}
has no nonzero solutions, and thus, has no solutions satisfying
\eqref{cond_r}.
\end{itemize}

\textbf{Step 6.} $\rnd(A)=1$ if and only if neither of the combinations
(1b)\&(2b), (1b)\&(2d), (1d)\&(2b), (1d)\&(2d) occur. If it does, stop:
$\rnd(A)>1$. Otherwise, for each pair $x_1,x_2\in\mathbb{R}$
obtained at Step 3, find a minimal real normal completion of $A$ as
described in \eqref{rnc1}--\eqref{a's_r}; for each pair
$x_1,x_2\in\mathbb{R}$ obtained at Step 4, find a minimal real normal
completion of $A$ as described in \eqref{a's_r}--\eqref{z_def_r2}.

\medskip
\noindent \textbf{End}

\medskip

Of course, if one is interested only in checking whether $\rnd(A)=1$,
the procedure can be terminated as soon as any of cases (1a), (1c),
(2a), (2c) occurs.

\begin{ex}\label{real_ex:a}
\rm{In Example \ref{ex:a},
$$A=\begin{bmatrix} 1 & 0 & 0\\
0 & 1 & 1\\
1 & 0 & 1
\end{bmatrix}$$ is a matrix with real entries,
and $$ A^TA-AA^T=\begin{bmatrix} 1 & 0 & 0\\
0 & -1 & 0\\
0 & 0 & 0
\end{bmatrix},$$
so that the rank condition is satisfied. By Corollaries \ref{n=3} and
\ref{nd_rnd}, $\rnd(A)=1$. We have $u_1=e_1$,
$u_2=e_2$, and $$P=I-u_1u_1^T-u_2u_2^T=\begin{bmatrix} 0 & 0 & 0\\
0 & 0 & 0\\
0 & 0 & 1
\end{bmatrix}.$$
 Then, in the procedure above,
$\mathbf{u}^T=\mathbf{q}^T=0$, $\mathbf{v}^T=\mathbf{w}^T=e_3$.
Since
$m_1=\rank(\mathbf{u}^T-\mathbf{q}^T,\mathbf{v}^T-\mathbf{w}^T)=0$,
as in Case (1a),  any $x_1,x_2\in\mathbb{R}$ with $x_1^2-x_2^2=1$
solve \eqref{real_main}. We have  $y_1=x_2$ and $y_2=x_1$.
According to \eqref{z_def_r1},  $z=1$. Therefore, for any
$x_1\in\mathbb{R}$ such that $|x_1|\ge 1$,
$$\begin{bmatrix} 1 & 0 & 0 & x_1\\
0 & 1 & 1 & \pm\sqrt{x_1^2-1}\\
1 & 0 & 1 & 0\\
\pm\sqrt{x_1^2-1} & x_1 & 0 & 1
\end{bmatrix}$$
is a minimal real normal completion of $A$. We also have
$$m_2=\rank(\mathbf{u}^T+\mathbf{q}^T,\mathbf{v}^T+\mathbf{w}^T)=\rank(0,2e_3)=1,$$ and as in Case (2c), $h=e_3$,
$\gamma=0$, $\delta=2$, so  that  $x_1=\pm 1=-y_2,x_2=0=-y_1$.
According to \eqref{z_def_r2},  $z=1$. Thus, $$\begin{bmatrix} 1 & 0 & 0 & \pm 1\\
0 & 1 & 1 & 0\\
1 & 0 & 1 & 0\\
0 & \mp 1 & 0 & 1
\end{bmatrix}$$
is a minimal real normal completion of $A$. We therefore obtain the
set of minimal real normal completions of $A$ arising from Cases (1a)
and (2c). Note that the minimal real normal completions of $A$ in this
example are special cases of minimal normal completions of $A$ as in
\eqref{ncompl}, where $x_1\in\mathbb{R}\colon |x_1|\ge 1$,
$x_2=\pm\sqrt{x_1^2-1}$, and $\mu=1$, or where $x_1=i$, $x_2=0$,
and $\mu=\pm i$.

We know from Example \ref{ex:a} that $\ud(A)=2$ and that $A$ is
unitarily (and therefore  orthogonally) irreducible. This example shows
that $\rnd(A)<\ud(A)$ is possible for an orthogonally irreducible
matrix $A$.}
\end{ex}
\begin{ex}\label{shift_real}
\rm{As we saw in Example \ref{shift}, $\nd(A)=\ud(A)=1$, for the shift
matrices $A$ of size greater than $3$. Moreover, in this case all
minimal normal completions are actually minimal unitary
completions. Since the shift matrix $A$ has real entries, and
$$\nd(A)\le\rnd(A)\le\ud(A),$$ we conclude that $\rnd(A)=\ud(A)=1$,
and all minimal real normal completions are actually minimal
orthogonal completions. This corresponds to $\zeta$ and $\rho$ in
\eqref{uc_shift} independently taking values $1$ or $-1$.}
\end{ex}

\subsection{The generic case}\label{re_finer}
We will describe now the generic situation in each matrix dimension
$n$. As in the complex case, a finer analysis is needed for this.
However, in the real case our analysis is more straightforward and
does not use a ``heavy machinery" of symmetric extensions.

 For a real matrix $A$ satisfying the rank condition, it follows from Lemma
\ref{other_nec} that
 the following identities hold:
\begin{equation}\label{real_rel}
\mathbf{u}\mathbf{u}^T=\mathbf{q}\mathbf{q}^T,\quad
\mathbf{v}\mathbf{v}^T=\mathbf{w}\mathbf{w}^T,\quad
\mathbf{u}\mathbf{v}^T=\mathbf{w}\mathbf{q}^T,\quad
\mathbf{u}\mathbf{w}^T=\mathbf{v}\mathbf{q}^T.
\end{equation}
Consequently,
\begin{equation*}
(\mathbf{u}+\mathbf{q})(\mathbf{u}-\mathbf{q})^T=0,\
(\mathbf{v}+\mathbf{w})(\mathbf{v}-\mathbf{w})^T=0,\
(\mathbf{v}+\mathbf{w})(\mathbf{u}-\mathbf{q})^T=0,\
(\mathbf{u}+\mathbf{q})(\mathbf{v}-\mathbf{w})^T=0,
\end{equation*}
i.e., each of the vectors $(\mathbf{u}+\mathbf{q})^T$ and
$(\mathbf{v}+\mathbf{w})^T$ is orthogonal to each of the vectors
$(\mathbf{u}-\mathbf{q})^T$ and $(\mathbf{v}-\mathbf{w})^T$. Note
that the vectors $\mathbf{u}^T$, $\mathbf{v}^T$, $\mathbf{w}^T$,
and $\mathbf{q}^T$  belong to $\range(P)$ whose dimension is
$n-2$.

Restricting our attention to real matrices in ${\frak M}_n$, we now
consider different values of $n$ separately. 

 \begin{description}
    \item[$\mathbf{n=2}$] In this case, vectors
        $\mathbf{u}^T,\mathbf{v}^T,\mathbf{w}^T$, and
        $\mathbf{q}^T$ do not arise, thus necessarily $\rnd(A)=1$. This
        follows also from the fact the $\nd(A)=1$ by
        Corollary~\ref{nd_rnd}.

 \item[$\mathbf{n=3}$] In this case, vectors
     $\mathbf{u}^T,\mathbf{v}^T,\mathbf{w}^T$, and $\mathbf{q}^T$
     are collinear, and in view of \eqref{real_rel} either
     $\mathbf{u}^T=\mathbf{q}^T$ and
     $\mathbf{v}^T=\mathbf{w}^T$, or $\mathbf{u}^T=-\mathbf{q}^T$
     and $\mathbf{v}^T=-\mathbf{w}^T$. Thus either Case (1a) or
     Case (2a) in the Procedure occurs. Therefore, necessarily
     $\rnd(A)=1$ (this follows also from Corollaries \ref{n=3} and
     \ref{nd_rnd}).

 \item[$\mathbf{n=4}$] Generically, $\mathbf{u}^T\neq\pm\mathbf{q}^T$, and
$\mathbf{v}^T\neq\pm\mathbf{w}^T$. Since $\dim(\range(P))=2$, the
vectors $(\mathbf{u}+\mathbf{q})^T$ and
$(\mathbf{v}+\mathbf{w})^T$ are collinear and orthogonal to
$(\mathbf{u}-\mathbf{q})^T$ and $(\mathbf{v}-\mathbf{w})^T$, which
are also collinear. Then both the combination of Case (1b) and
Case (2b), and the combination of Case (1c) and Case (2c) (and
thus, both $\rnd(A)=1$ and $\rnd(A)>1$) occur on the sets whose
interior is nonempty in the relative topology of the manifold
${\frak M}_4$. Indeed, the first combination occurs when we fix
$\mathbf{u}^T,\mathbf{q}^T$ and make
$\mathbf{v}\mathbf{v}^T=\mathbf{w}\mathbf{w}^T$ small enough, and
the second combination occurs when we fix
$\mathbf{v}^T,\mathbf{w}^T$ and make
$\mathbf{u}\mathbf{u}^T=\mathbf{q}\mathbf{q}^T$  small enough.

 \item[$\mathbf{n=5}$] Generically,
     $\mathbf{u}^T\neq\pm\mathbf{q}^T$, and
     $\mathbf{v}^T\neq\pm\mathbf{w}^T$. Since
     $\dim(\range(P))=3$, at least one of the pairs of vectors
     (generically, only one such pair), $(\mathbf{u}+\mathbf{q})^T$
     and $(\mathbf{v}+\mathbf{w})^T$ or
     $(\mathbf{u}-\mathbf{q})^T$ and $(\mathbf{v}-\mathbf{w})^T$, is
     collinear. As in the case $n=4$, for a collinear pair, both cases (b)
     and (c) occur on the sets whose interior is nonempty in the
     relative topology of ${\frak M}_5$. Thus, combinations of Case (1b) and
     Case (2d), Case (1d) and Case (2b) (where $\rnd(A)>1$), and
     combinations of Case (1c) and Case (2d), Case (1d) and Case
     (2c) (where $\rnd(A)=1$) occur on the sets whose interior is
     nonempty in the relative topology of ${\frak M}_5$. 

 \item[$\mathbf{n\ge 6}$] Since $\dim(\range(P))\ge 4$, the pairs
     $(\mathbf{u}+\mathbf{q})^T$, $(\mathbf{v}+\mathbf{w})^T$ and
     $(\mathbf{u}-\mathbf{q})^T$, $(\mathbf{v}-\mathbf{w})^T$ are
     generically linearly independent. Therefore, the combination of
     Case (1d) and Case (2d) (corresponding to $\rnd(A)>1$) occurs
     generically.
\end{description}

Thus, we see that the generic situation in the real case is
similar to the one in the complex case.

\section{Commuting completion
problems}\label{sec:commut}

The problem of finding commuting completions of a $N$-tuple of
$n\times n$ matrices was raised in \cite{DST}, where a special
emphasis was placed on symmetric completions of $N$-tuples of
symmetric matrices. In \cite{KW}, an inverse completion $(A_{\rm
ext},B_{\rm ext})$ of a pair $(A,B)$ was constructed. Namely, $A_{\rm
ext}$, $B_{\rm ext}$ by definition satisfy $A_{\rm ext}B_{\rm
ext}=\alpha I$ with a non-zero scalar $\alpha$, and therefore
commute. Our results from Sections \ref{sec:compl} and \ref{sec:re}
can be used to tackle commuting completion problems in the classes
of Hermitian (resp., symmetric, or symmetric/antisymmetric) pairs of
matrices.

\subsection{The commuting Hermitian completion problem.}\label{herm}
Let $(A_1, A_2)$ be a pair of Hermitian matrices of size $n\times
n$. We define the \emph{commuting Hermitian defect of $A_1$ and
$A_2$}, denoted $\chd(A_1,A_2)$, as the minimal possible $p$ such
that there
exist commuting Hermitian matrices $B_1=\begin{bmatrix} A_1 & *\\
* & * \end{bmatrix}$ and $B_2=\begin{bmatrix} A_2 & *\\
* & * \end{bmatrix}$ of size $(n+p)\times(n+p)$.
We call such a pair $(B_1,B_2)$  of size $(n+\chd(A_1,A_2))\times
(n+\chd(A_1,A_2))$ a \emph{minimal commuting Hermitian completion
of $(A_1,A_2)$}.

Since $(B_1,B_2)$ is a commuting Hermitian completion of a pair
$(A_1,A_2)$ of Hermitian matrices if and only if $B=B_1+iB_2$ is a
normal completion of $A=A_1+iA_2$, and therefore
$\chd(A_1,A_2)=\nd(A_1+iA_2)$, the results from Section \ref{shorter}
allow one to check whether $\chd(A_1,A_2)=1$, and when this is the
case --- to construct all minimal commuting Hermitian completions of
$(A_1, A_2)$. For example, Theorem \ref{thm1} yields the following.

\begin{thm}\label{thm:chd}
Let $A_1,A_2\in\mathbb{C}^{n\times n}$ be Hermitian.
\begin{itemize}
    \item[(i)] $\chd(A_1,A_2)=1$ if and only if $\rank(A_1A_2-A_2A_1)=2$ and
    the equation
    \begin{equation}\label{eq_com}
PA_1(t_1u_1-\overline{t}_1u_2)=iPA_2(t_2u_1+\overline{t}_2u_2)
\end{equation}
has a solution pair $t_1,t_2\in\mathbb{C}$ satisfying
\begin{equation}\label{cond_com}
\Re(\overline{t}_1t_2)=d.
\end{equation}\end{itemize}
Here $u_1,u_2\in\mathbb{C}^n$ are the unit eigenvectors of the
matrix $2i(A_1A_2-A_2A_1)$ corresponding to its nonzero eigenvalues
$\lambda_1=d (>0)$ and $\lambda_2=-d$, and
$P=I_n-u_1u_1^*-u_2u_2^*$ is the orthogonal projection of
$\mathbb{C}^n$ onto $\Null(A_1A_2-A_2A_1)$. \begin{itemize}
\item[(ii)] If $\chd(A_1,A_2)=1$, $t_1$ and $t_2$ satisfy \eqref{eq_com}
    and \eqref{cond_com}, and $\mu\in\mathbb{T}$ is
    arbitrary then the pair $(B_1,B_2)$ of matrices
\begin{equation}\label{ch_compl_1}
B_1=\begin{bmatrix} A_1 & \frac{\mu}{2}(t_2u_1+\overline{t}_2u_2)\\
\frac{\overline{\mu}}{2}(\overline{t}_2u_1^*+{t}_2u_2^*) & z_1
\end{bmatrix},
\end{equation}
\begin{equation}\label{ch_compl_2}
 B_2=\begin{bmatrix} A_2 & \frac{\mu}{2i}(t_1u_1-\overline{t}_1u_2)\\
-\frac{\overline{\mu}}{2i}(\overline{t}_1u_1^*-{t}_1u_2^*) & z_2
\end{bmatrix}
\end{equation}
is a minimal commuting Hermitian completion of $(A_1,A_2)$. Here
\begin{equation}\label{z_1}
z_1=u_1^*A_1u_1-\frac{1}{d}\left(\Im(t_2^2u_2^*A_2u_1)+\Re(t_1t_2u_2^*A_1u_1)\right)
\end{equation}
and
\begin{equation}\label{z_2}
z_2=u_1^*A_2u_1-\frac{1}{d}\left(\Im(t_1^2u_2^*A_1u_1)-\Re(t_1t_2u_2^*A_2u_1)\right).
\end{equation}\end{itemize}
All minimal commuting Hermitian completions of $(A_1,A_2)$ arise in
this way. \end{thm}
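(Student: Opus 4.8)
The plan is to reduce the whole statement to Theorem~\ref{thm1} applied to the matrix $A=A_1+iA_2$. First I would record the elementary identity $A^*A-AA^*=2i(A_1A_2-A_2A_1)$, which holds because $A_1$ and $A_2$ are Hermitian. This shows at once that the rank condition $\rank(A^*A-AA^*)=2$ is the same as $\rank(A_1A_2-A_2A_1)=2$, that the unit eigenvectors $u_1,u_2$ of $A^*A-AA^*$ for the eigenvalues $\pm d$ and the orthogonal projection $P$ onto its kernel are exactly the objects named in the theorem, and $\Null(A^*A-AA^*)=\Null(A_1A_2-A_2A_1)$. Combined with the observation preceding the theorem that $(B_1,B_2)$ is a commuting Hermitian completion of $(A_1,A_2)$ if and only if $B=B_1+iB_2$ is a normal completion of $A$ (so that $\chd(A_1,A_2)=\nd(A)$), this sets up a dictionary between the two problems.

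Next I would introduce the bijective change of variables $t_1=x_1-\overline{x}_2$, $t_2=x_1+\overline{x}_2$ on $\mathbb{C}^2$, whose inverse is $x_1=\tfrac12(t_1+t_2)$, $x_2=\tfrac12(\overline{t}_2-\overline{t}_1)$. Writing $A^*=A_1-iA_2$ and $A=A_1+iA_2$ in equation~\eqref{eq} and collecting the $A_1$- and $A_2$-contributions, equation~\eqref{eq} becomes $PA_1\bigl[(x_1u_1+x_2u_2)-(\overline{x}_2u_1+\overline{x}_1u_2)\bigr]=iPA_2\bigl[(x_1u_1+x_2u_2)+(\overline{x}_2u_1+\overline{x}_1u_2)\bigr]$, and since the two bracketed combinations are precisely $t_1u_1-\overline{t}_1u_2$ and $t_2u_1+\overline{t}_2u_2$, this is exactly~\eqref{eq_com}. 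A one-line expansion gives $|x_1|^2-|x_2|^2=\Re(\overline{t}_1t_2)$, so~\eqref{cond} turns into~\eqref{cond_com}. Invoking Theorem~\ref{thm1}(i) for $A$ now yields part~(i).

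For part~(ii), given $t_1,t_2$ satisfying~\eqref{eq_com}--\eqref{cond_com} I recover $x_1,x_2$ and let $B$ be the normal completion of $A$ furnished by Theorem~\ref{thm1}(ii) in the form~\eqref{ncompl}; then $B_1:=\tfrac12(B+B^*)$ and $B_2:=\tfrac1{2i}(B-B^*)$ are commuting Hermitian matrices whose north-western blocks are $A_1$ and $A_2$. The $(1,2)$-block of $B$ is $\mu(x_1u_1+x_2u_2)$ and that of $B^*$ is the adjoint of the $(2,1)$-block of $B$, namely $\mu(\overline{x}_2u_1+\overline{x}_1u_2)$; half their sum is $\tfrac{\mu}{2}(t_2u_1+\overline{t}_2u_2)$ and $\tfrac1{2i}$ times their difference is $\tfrac{\mu}{2i}(t_1u_1-\overline{t}_1u_2)$, which are the off-diagonal blocks in~\eqref{ch_compl_1}--\eqref{ch_compl_2} (the $(2,1)$-blocks then follow from Hermiticity of $B_1,B_2$). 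For the corner entries one has $z_1=\Re z$ and $z_2=\Im z$ with $z$ given by~\eqref{z_def}; substituting $a_{11}=u_1^*A_1u_1+i\,u_1^*A_2u_1$, $a_{12}=u_1^*A_1u_2+i\,u_1^*A_2u_2$, $a_{21}=\overline{u_1^*A_1u_2}+i\,\overline{u_1^*A_2u_2}$ (valid since $A_1,A_2$ are Hermitian, so $u_j^*A_ku_j\in\mathbb{R}$ and $u_2^*A_ku_1=\overline{u_1^*A_ku_2}$) together with the formulas for $x_1,x_2$ in terms of $t_1,t_2$, and separating real and imaginary parts, should produce exactly~\eqref{z_1} and~\eqref{z_2}. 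Finally, any commuting Hermitian completion $(B_1,B_2)$ of size $(n+1)\times(n+1)$ makes $B_1+iB_2$ a minimal normal completion of $A$, hence of the form~\eqref{ncompl}; reading off $x_1,x_2,\mu$ and setting $t_1,t_2$ by the change of variables above exhibits it as one of the pairs just constructed, which proves that all minimal commuting Hermitian completions of $(A_1,A_2)$ arise in this way.

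The step I expect to be the main obstacle is the last verification, namely matching $\Re z$ and $\Im z$ with~\eqref{z_1}--\eqref{z_2}: this is a purely mechanical but bookkeeping-heavy computation in which one must track conjugations carefully, the cross terms mixing the $A_1$- and $A_2$-data being where sign and conjugation slips are easiest. Everything else is a routine translation through the dictionary $A\leftrightarrow(A_1,A_2)$, $(x_1,x_2)\leftrightarrow(t_1,t_2)$ and an appeal to the already-proved Theorem~\ref{thm1}.
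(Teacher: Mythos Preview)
Your proposal is correct and follows essentially the same approach as the paper: both set $A=A_1+iA_2$, use the identity $A^*A-AA^*=2i(A_1A_2-A_2A_1)$, apply the change of variables $t_1=x_1-\overline{x}_2$, $t_2=x_1+\overline{x}_2$, and reduce everything to Theorem~\ref{thm1}. The paper's proof is in fact terser than yours---it simply asserts that under this substitution \eqref{eq}, \eqref{cond}, \eqref{ncompl}, and \eqref{z_def} transform into \eqref{eq_com}, \eqref{cond_com}, \eqref{ch_compl_1}--\eqref{ch_compl_2}, and \eqref{z_1}--\eqref{z_2} respectively---so your more detailed account of the off-diagonal blocks and your honest flagging of the $z_1,z_2$ bookkeeping are welcome elaborations rather than deviations.
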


\begin{proof}
Letting $A=A_1+iA_2$, we observe that
$A^*A-AA^*=2i(A_1A_2-A_2A_1)$. It is straightforward to verify
that, under the change of variables
 $t_1=x_1-\overline{x}_2$, $t_2=x_1+\overline{x}_2$,
 condition \eqref{eq} in Theorem \ref{thm1} is equivalent to condition
 \eqref{eq_com}, \eqref{cond} is equivalent to
 \eqref{cond_com}, $B_1$ and $B_2$ defined by \eqref{ch_compl_1} and \eqref{ch_compl_2} are
 Hermitian and such that
 $B=B_1+iB_2$ is as in \eqref{ncompl}, $z_1$ and $z_2$ defined by \eqref{z_1} and \eqref{z_2}
 are real and such that
$z=z_1+iz_2$ is as in \eqref{z_def}. Thus, this theorem is
equivalent  to Theorem \ref{thm1}.
\end{proof}

\subsection{The commuting completion problem in the class of pairs of symmetric and antisymmetric
matrices.}\label{sym-antisym} Let $A_1=A_1^T\in\mathbb{R}^{n\times
n}$ and $A_2=-A_2^T\in\mathbb{R}^{n\times n}$. It is natural to
ask what is the minimal $p$ such that there exist commuting
matrices
$B_1=B_1^T=\begin{bmatrix} A_1 & *\\
* & * \end{bmatrix} \in\mathbb{R}^{(n+p)\times
(n+p)}$ and $B_2=-B_2^T=\begin{bmatrix} A_2 & *\\
* & * \end{bmatrix}\in\mathbb{R}^{(n+p)\times (n+p)}$. Such a pair
$(B_1,B_2)$ is a \emph{minimal commuting completion of $(A_1,A_2)$
in the class of pairs of symmetric and antisymmetric matrices}.
Since $(B_1,B_2)$ is a commuting completion of $(A_1,A_2)$ in this
class if and only if $B=B_1+B_2$ is a real normal completion of
$A=A_1+A_2$, our results from Section \ref{sec:re} can be restated
in terms of pairs of matrices in this class. We omit the details,
since the reasoning is similar to the one in Section \ref{herm}.

\subsection{The commuting symmetric completion
problem.}\label{sym}

In this section, we consider the commuting completion problem in
the class of pairs of symmetric matrices. This is a special case
of the problem raised in Degani et al. \cite{DST} (see the first
paragraph of Section \ref{sec:commut}) for $N=2$. The authors of
\cite{DST} presented an approach to $n$-dimensional cubature
formulae where the cubature nodes are obtained by means of
commuting completions of certain matrix tuples. While their
commuting completion problem is stated in a certain subclass of
tuples of symmetric matrices, some observations were also made for
the problem in the whole class. In particular, the question on the
minimal possible size of completed matrices was accentuated as
important.

Let $A_1=A_1^T\in\mathbb{R}^{n\times n}$ and
$A_2=A_2^T\in\mathbb{R}^{n\times n}$. We define the
\emph{commuting symmetric defect of $A_1$ and $A_2$}, denoted
$\csd(A_1,A_2)$, as the minimal possible $p$ such that there exist
commuting
symmetric matrices $B_1=\begin{bmatrix} A_1 & *\\
* & * \end{bmatrix},B_2=\begin{bmatrix} A_2 & *\\
* & * \end{bmatrix}\in\mathbb{R}^{(n+p)\times(n+p)}$.
Such a pair $(B_1,B_2)$ of size $(n+\csd(A_1,A_2))\times
(n+\csd(A_1,A_2))$ is a \emph{minimal commuting symmetric
completion of the pair $(A_1,A_2)$}.

We note that $(B_1,B_2)$ is a commuting symmetric completion of a
pair $(A_1,A_2)$ of real symmetric matrices if and only if
$B=B_1+iB_2$ is a normal, and simultaneously complex symmetric,
completion of $A=A_1+iA_2$.  We also observe that  a priori
\begin{equation}\label{csd-chd}
\csd(A_1,A_2)\ge\chd(A_1,A_2).
\end{equation}

\textbf{Open problem.} Is it true that for any pair $(A_1,A_2)$ of
real symmetric matrices one has $\csd(A_1,A_2)=\chd(A_1,A_2)$?

\medskip

This problem is somewhat similar to the open problem stated in
Section \ref{re_shorter}. The latter actually asks whether a
minimal normal completion of a real matrix can be chosen to be
real, while the former is equivalent to the question whether a
minimal normal completion of a complex symmetric matrix can be
chosen to be complex symmetric.

 The following theorem shows that,
for a pair $(A_1,A_2)$ of real symmetric matrices,
\begin{equation*}\label{csd1}
\csd(A_1,A_2)=1\Longleftrightarrow\chd(A_1,A_2)=1,
\end{equation*}
which motivates the open problem above. Moreover, this theorem
actually shows that if $\csd(A_1,A_2)=1$ then the set of all minimal
commuting symmetric completions $(B_1,B_2)$ of
 $(A_1,A_2)$ can be obtained by putting in Theorem \ref{thm:chd}
$u_2=\overline{u}_1$ and $\mu=1$.

\begin{thm}\label{thm:csd}
Let $A_1,A_2\in\mathbb{R}^{n\times n}$ be symmetric.
\begin{itemize}
    \item[(i)] $\csd(A_1,A_2)=1$ if and only if $\rank(A_1A_2-A_2A_1)=2$ and
    the equation
    \begin{equation}\label{eq_sym}
PA_1\Im(t_1u_1)=PA_2\Re(t_2u_1)
\end{equation}
has a solution pair $t_1,t_2\in\mathbb{C}$ satisfying
\begin{equation}\label{cond_sym}
\Re(\overline{t}_1t_2)=d.
\end{equation}
Here $u_1\in\mathbb{C}^n$ is the unit eigenvector of the matrix
$2i(A_1A_2-A_2A_1)$ corresponding to its eigenvalue
$\lambda_1=d(>0)$, and $P=I_n-u_1u_1^*-\overline{u}_1u_1^T$.
    \item[(ii)]
If $\csd(A_1,A_2)=1$, $t_1$ and $t_2$ satisfy \eqref{eq_sym} and
\eqref{cond_sym} then the pair $(B_1,B_2)$ of matrices
\begin{equation}\label{compl_sym_1}
B_1=\begin{bmatrix} A_1 & \Re(t_2u_1)\\
\Re({t}_2u_1)^T & z_1
\end{bmatrix},
\end{equation}
\begin{equation}\label{compl_sym_2}
 B_2=\begin{bmatrix} A_2 & \Im(t_1u_1)\\
\Im(t_1u_1)^T & z_2
\end{bmatrix}
\end{equation}
is a minimal commuting symmetric completion of $(A_1,A_2)$. Here
\begin{equation}\label{z_1_sym}
z_1=u_1^*A_1u_1-\frac{1}{d}\left(\Im(t_2^2u_1^TA_2u_1)+\Re(t_1t_2u_1^TA_1u_1)\right)
\end{equation}
and
\begin{equation}\label{z_2_sym}
z_2=u_1^*A_2u_1-\frac{1}{d}\left(\Im(t_1^2u_1^TA_1u_1)-\Re(t_1t_2u_1^TA_2u_1)\right).
\end{equation}
\end{itemize}

All minimal commuting symmetric completions of $(A_1,A_2)$ arise
in this way.
\end{thm}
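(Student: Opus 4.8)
The plan is to deduce Theorem~\ref{thm:csd} from Theorem~\ref{thm:chd} (hence ultimately from Theorem~\ref{thm1}) by exploiting the complex symmetry of $A:=A_1+iA_2$. First I would record, as in the proof of Theorem~\ref{thm:chd}, that $A^*A-AA^*=2i(A_1A_2-A_2A_1)$; since $A_1,A_2$ are real symmetric, $A$ is complex symmetric ($A^T=A$, $A^*=\overline{A}$) and $H:=A^*A-AA^*$ is purely imaginary and antisymmetric, so $\overline{H}=-H$. Conjugating $Hu_1=du_1$ then gives $H\overline{u}_1=-d\overline{u}_1$, so the conjugate $\overline{u}_1$ of the $d$-eigenvector $u_1$ is a $(-d)$-eigenvector, and it is linearly independent of $u_1$ (orthogonal eigenvectors of a Hermitian matrix for distinct eigenvalues). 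Hence we may take $u_2=\overline{u}_1$ throughout Theorems~\ref{thm1} and~\ref{thm:chd}; with this choice $u_2u_2^*=\overline{u}_1u_1^T$, so $P=I_n-u_1u_1^*-\overline{u}_1u_1^T$, and $\overline{P}=P$, i.e. $P$ is real symmetric, exactly as in the statement.

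Next I would use the dictionary recorded just before the theorem: a pair $(B_1,B_2)$ extending $(A_1,A_2)$ is a commuting symmetric completion if and only if $B:=B_1+iB_2$ is a normal, complex symmetric completion of $A$. In particular $\csd(A_1,A_2)=1$ forces $A$ to be non-normal with $\nd(A)=1$, so by Theorem~\ref{thm1} every minimal commuting symmetric completion produces a $B$ of the form \eqref{ncompl} for some $x_1,x_2$ satisfying \eqref{eq}, \eqref{cond} and some $\mu\in\mathbb{T}$. Imposing $B=B^T$ (using $A^T=A$ and $u_2=\overline{u}_1$) means exactly that the $(1,2)$- and $(2,1)$-blocks of \eqref{ncompl} are transposes of one another; since $x_1\neq0$ by \eqref{cond}, a short computation shows this is equivalent to $\mu=\overline{\mu}$, i.e. $\mu=\pm1$, and replacing $(x_1,x_2)$ by $(-x_1,-x_2)$ if necessary we may normalize $\mu=1$. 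Conversely, for $\mu=1$ and $u_2=\overline{u}_1$ the completion \eqref{ncompl} is automatically complex symmetric, since $A^*=\overline{A}$ and its off-diagonal blocks $x_1u_1+x_2\overline{u}_1$ and $x_2u_1^*+x_1u_1^T=(x_1u_1+x_2\overline{u}_1)^T$ are transposes of each other. Thus the complex symmetric minimal normal completions of $A$ are precisely those obtained from Theorem~\ref{thm1} with $u_2=\overline{u}_1$ and $\mu=1$.

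It then remains to rewrite everything in the variables of Theorem~\ref{thm:csd}. Applying the substitution $t_1=x_1-\overline{x}_2$, $t_2=x_1+\overline{x}_2$ used in the proof of Theorem~\ref{thm:chd}, conditions \eqref{eq}, \eqref{cond} become \eqref{eq_com}, \eqref{cond_com}; specializing $u_2=\overline{u}_1$ and using $t_1u_1-\overline{t}_1\overline{u}_1=2i\,\Im(t_1u_1)$ and $t_2u_1+\overline{t}_2\overline{u}_1=2\,\Re(t_2u_1)$ turns these into \eqref{eq_sym}, \eqref{cond_sym}. The same substitution and identities (together with $u_2^*=u_1^T$), applied with $\mu=1$ to the completion formulas \eqref{ch_compl_1}--\eqref{z_2} of Theorem~\ref{thm:chd}, reduce them term by term to \eqref{compl_sym_1}--\eqref{z_2_sym}. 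Finally, since by Theorem~\ref{thm1}(i) the system \eqref{eq}--\eqref{cond} is solvable (once the rank condition holds) precisely when $\nd(A)=1$, i.e. when $\chd(A_1,A_2)=1$, we obtain $\csd(A_1,A_2)=1$ if and only if $\chd(A_1,A_2)=1$, together with the asserted description of all minimal commuting symmetric completions. I expect the only care to be needed in these last two steps — checking that complex symmetry of \eqref{ncompl} is exactly $\mu=\pm1$, and that the substitution combined with $u_2=\overline{u}_1$ matches the off-diagonal blocks and the scalars $z_1,z_2$ on the nose; the conceptual input, that $u_2=\overline{u}_1$ is an admissible choice of $(-d)$-eigenvector, is immediate.
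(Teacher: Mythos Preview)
Your proposal is correct and follows essentially the same route as the paper: choose $u_2=\overline{u}_1$, observe that $P$ is then real, and identify the complex symmetric minimal normal completions of $A=A_1+iA_2$ among those of Theorem~\ref{thm1}/\ref{thm:chd} as exactly those with $\mu=\pm1$, normalizing to $\mu=1$ via $(x_1,x_2)\mapsto(-x_1,-x_2)$ (equivalently $(t_1,t_2)\mapsto(-t_1,-t_2)$). The only cosmetic difference is that the paper phrases the $\mu=\pm1$ step as ``$B_1,B_2$ in \eqref{ch_compl_1}--\eqref{ch_compl_2} are real symmetric iff $\mu=\pm1$'', while you phrase it as ``$B$ in \eqref{ncompl} is complex symmetric iff $\mu=\pm1$''; these are the same condition.
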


\begin{proof} (i)
By \eqref{csd-chd}, if $\csd(A_1,A_2)=1$ then $\chd(A_1,A_2)=1$.
Therefore, by Theorem \ref{thm:chd}, $\rank(A_1A_2-A_2A_1)=2$ and
equation \eqref{eq_com} has a solution pair $t_1,t_2\in\mathbb{C}$
satisfying \eqref{cond_com}. If $u_1$ is the unit eigenvector of
the Hermitian matrix $2i(A_1A_2-A_2A_1)$ corresponding to its
eigenvalue $\lambda_1=d(>0)$ then $\overline{u}_1$ is the unit
eigenvector corresponding to the eigenvalue $\lambda_2=-d$. Thus,
we can choose in Theorem \ref{thm:chd} $u_2=\overline{u}_1$. Then
$P=I_n-u_1u_1^*-\overline{u}_1u_1^T$ is a real $n\times n$ matrix,
and equation \eqref{eq_com} becomes \eqref{eq_sym}.

Conversely, if $\rank(A_1A_2-A_2A_1)=2$ and equation
\eqref{eq_sym} (which is equivalent to \eqref{eq_com} in our case)
has a solution pair $t_1,t_2\in\mathbb{C}$ satisfying
\eqref{cond_sym} (= \eqref{cond_com}) then by Theorem
\ref{thm:chd}, $\chd(A_1,A_2)=1$. For any such $t_1,t_2$ the
corresponding minimal commuting Hermitian completions
$(B_1,B_2)$ of $(A_1,A_2)$ have the form
\eqref{ch_compl_1}--\eqref{ch_compl_2}. We observe that since
$u_2=\overline{u}_1$, the matrices $B_1$ and $B_2$ are real
symmetric if and only if $\mu=1$ or $\mu=-1$. Consequently,
$\csd(A_1,A_2)=1$.

(ii) If $t_1,t_2\in\mathbb{C}$ satisfy \eqref{eq_sym}--\eqref{cond_sym}
then so do
  $t'_1=-t_1$ and $t'_2=-t_2$. Therefore, we do not
miss any minimal commuting symmetric completions of $(A_1,A_2)$ if
in \eqref{ch_compl_1}--\eqref{ch_compl_2} we choose  $t_1,t_2$ as
above and fix $\mu=1$. Finally, since $(B_1, B_2)$ constructed in
Theorem \ref{thm:chd} with $\mu=1$ has in our case the form
\eqref{compl_sym_1}--\eqref{compl_sym_2}, and
\eqref{z_1}--\eqref{z_2} become \eqref{z_1_sym}--\eqref{z_2_sym}, this
completes the proof.
\end{proof}
\begin{rem}\label{proc_sym}
\rm{The procedure for checking whether $\csd(A_1,A_2)=1$, and if
this is the case --- for finding all minimal commuting symmetric
completions of a pair of symmetric matrices $(A_1,A_2)$, can be
obtained as the specialization of the procedure mentioned in Section
\ref{herm} (which, in turn,  is based on the procedure from Section
\ref{shorter}) by setting $u_2=\overline{u}_1$ and $\mu=1$ (see
Theorem \ref{thm:csd} and its preceding paragraph).}
\end{rem}

\section{The separability problem}\label{sec:sep}

In the 1980s the use of quantum systems as computing devices
started to being explored. The idea gained momentum when Peter
Shor \cite{S} presented a quantum algorithm for factoring a large
composite integer $N$ that was polynomial in the number of digits in
$N$. An excellent overview article on the subject of quantum
computing is \cite{Bennett_Nature}.

The separability problem occurs when a quantum system is divided
into parts. For convenience we consider a bipartite system. The state
of the system is described by a density matrix $M$, a positive
semidefinite matrix with trace 1. A state is called {\em separable}
when it can be written as a convex combination of so-called pure
separable states, i.e., $ \rho = \sum_{i=1}^k p_i \; \psi_i \psi_i^*
\otimes \phi_i \phi_i^* $ where $\psi_i$ and $\phi_i $ are (nonzero)
state vectors in the spaces corresponding to two parts of the system,
and $p_i
> 0$. When $\psi_i\in {\mathbb C}^m$ and
$\phi_i\in{\mathbb C}^n$, the matrix $\rho$ is called $m\times n$
separable. The number $k$ is referred to as the number of states
in the representation.

The problem whether a given state is separable or entangled (= not
separable) may be stated as a semi-algebraic one, and is therefore
decidable by the Tarski-Seidenberg decision procedure \cite{BCR}. As
it turns out though, the separability problem scales very poorly with
the number of variables and these techniques are in general not
practical. In fact, the separability problem in its full generality has been
shown to be NP-complete \cite{Gu}.

As a consequence of the results of Section~\ref{sec:compl} we can
state a new result for the $2\times n$ case. Thus we are concerned
with matrices
\begin{equation}
\label{pd2} M=\begin{bmatrix} A & B^* \\ B & C \end{bmatrix} \ge
0.
\end{equation}
Notice that if $M=\sum_{i=1}^k P_i \otimes Q_i$ with $P_i \in
{\mathbb C}^{2\times 2}$ positive semidefinite and $Q_i \in
{\mathbb C}^{n\times n}$ positive semidefinite, then
$\widetilde{M} = \sum_{i=1}^k P_i^T \otimes Q_i$ is positive
semidefinite as well. One easily sees that
\begin{equation}
\label{peres2a} \widetilde{M}=\begin{bmatrix} A & B \\ B^* & C
\end{bmatrix} \ge 0.
\end{equation}
Thus for \eqref{pd2} to have a chance to be $2\times n$ separable we
need \eqref{peres2a} to hold. This is referred to as the ``Peres test'';
see \cite{P}. As was observed by several authors, the $2\times n$
separability problem for \eqref{pd2} can easily be reduced to the case
when $A=I$; see, for instance, Proposition 3.1 in \cite{Wsep}. Using
Theorem 3.2 in \cite{Wsep}, which connects the separability problem
to the normal completion problem, we can now state a method for
checking separability of \eqref{pd2} in the case when $\rank(M) =
\rank (\widetilde{M}) = \rank(A) + 1$.

\begin{thm}\label{sep} Let $B$, $C \in {\mathbb C}^{n\times n}$ be such that
\begin{equation} M=\begin{bmatrix} I_n & B^* \\ B & C
\end{bmatrix} \ge 0,\quad  \widetilde{M}=\begin{bmatrix} I_n & B \\ B^* & C
\end{bmatrix} \ge 0, \end{equation} and suppose that
\begin{equation} \rank(M) = \rank(\widetilde{M}) = n +1.
\end{equation} Write
$$ C-BB^* = xx^*,\quad C-B^*B = yy^*$$ for some vectors $x,y\in{\mathbb C}^n$. Then $M$ is $2\times n$
separable if and only if $ x,y,B^*x,By $ are linearly dependent.
In this case, the minimal number of states in a separable
representation of $M$ is $n+1$.
\end{thm}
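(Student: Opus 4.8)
The plan is to obtain Theorem~\ref{sep} by feeding Theorem~\ref{normalext1} into the bridge between $2\times n$ separability and the normal completion problem provided by Theorem~3.2 of \cite{Wsep}. First I would record the algebraic set-up. Since $M\ge0$ has $I_n$ as its leading block, the Schur complement $C-BB^*$ is positive semidefinite with $\rank M=n+\rank(C-BB^*)$; as $\rank M=n+1$ this forces $C-BB^*$ to be rank one, which is exactly the statement $C-BB^*=xx^*$ with $x\neq0$, and likewise $C-B^*B=yy^*$ with $y\neq0$ comes from $\widetilde M$. Subtracting the two formulas for $C$ gives
\[
B^*B-BB^*=xx^*-yy^*,
\]
which is precisely identity \eqref{xy_eq} of Theorem~\ref{normalext1} with $A$ replaced by $B$, and taking traces gives $\|x\|=\|y\|$. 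Since $\trace(B^*B-BB^*)=0$, its right-hand side has rank $\le1$ precisely when it vanishes, that is, precisely when $B$ is normal; equivalently, $x$ and $y$ are linearly independent if and only if $B$ is not normal.

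Next I would dispose of the case $B$ normal (equivalently $x\parallel y$) separately. Here $xx^*=yy^*$, so $x,y,B^*x,By$ are automatically linearly dependent, and if $B=\sum_{i=1}^n\lambda_ie_ie_i^*$ in an orthonormal eigenbasis $\{e_i\}$, then
\[
M=\sum_{i=1}^n\begin{bmatrix}1\\ \lambda_i\end{bmatrix}\begin{bmatrix}1&\overline{\lambda_i}\end{bmatrix}\otimes e_ie_i^*+\begin{bmatrix}0\\ 1\end{bmatrix}\begin{bmatrix}0&1\end{bmatrix}\otimes xx^*
\]
exhibits $M$ as $2\times n$ separable with $n+1$ states. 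That $n+1$ is minimal (here and in general) follows because, if $M=\sum_{i=1}^kP_i\otimes q_iq_i^*$ with $k\le n$, then $\sum_i(P_i)_{11}q_iq_i^*=I_n$ forces $k=n$ with all $(P_i)_{11}>0$ and $\{(P_i)_{11}^{1/2}q_i\}$ an orthonormal basis, whence (using that each $P_i$ is rank-one positive semidefinite) $C=BB^*$, contradicting $C-BB^*=xx^*\neq0$. So the theorem holds when $B$ is normal.

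Now suppose $B$ is not normal, so $x$ and $y$ are linearly independent. By Theorem~3.2 of \cite{Wsep}, after the standard reduction to $A=I_n$ (Proposition~3.1 of \cite{Wsep}) and using $\rank M=\rank\widetilde M=n+1$, the matrix $M$ is $2\times n$ separable if and only if $B$ admits a normal completion $\begin{bmatrix}B & \nu x\\ y^* & z\end{bmatrix}$ of size $(n+1)\times(n+1)$ for some $\nu\in\mathbb{T}$, $z\in\mathbb{C}$, and in that case $n+1$ is the minimal number of states. I would then identify the existence of such a completion with linear dependence of $x,y,B^*x,By$: if $N=\begin{bmatrix}B & \nu x\\ y^*&z\end{bmatrix}$ is normal, equating the $(1,2)$-blocks of $N^*N=NN^*$ gives $\nu B^*x-By=\nu\overline{z}x-zy\in\spn\{x,y\}$, so $x,y,B^*x,By$ are linearly dependent (compare Lemma~\ref{nec_suf_cond}); conversely, since $x,y$ are linearly independent and \eqref{xy_eq} holds with $A=B$, Theorem~\ref{normalext1} produces exactly a normal completion of this form once $x,y,B^*x,By$ are linearly dependent. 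Chaining the two equivalences gives: $M$ is $2\times n$ separable $\iff$ $x,y,B^*x,By$ are linearly dependent, with minimal number of states $n+1$. Combined with the normal-$B$ case above, this proves the theorem.

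As a sanity check on (and concrete explanation of) the input from \cite{Wsep}, I would verify its easy half directly: diagonalize a normal completion $\begin{bmatrix}B & \nu x\\ y^*&z\end{bmatrix}=U\Lambda U^*$, write the $i$th column of $U$ as $\begin{bmatrix}v_i\\ s_i\end{bmatrix}$ with $v_i\in\mathbb{C}^n$, so that $\sum v_iv_i^*=I_n$, $B=\sum\lambda_iv_iv_i^*$ and $\sum\lambda_i\overline{s_i}v_i=\nu x$; then with $V=[v_1\ \cdots\ v_{n+1}]$ and $\overline{s}=(\overline{s_1},\dots,\overline{s_{n+1}})^T$ spanning $\ker V$ one has $V^*V=I_{n+1}-\overline{s}\,\overline{s}^*$, whence $\sum|\lambda_i|^2v_iv_i^*-BB^*=(\nu x)(\nu x)^*=xx^*=C-BB^*$, so $C=\sum|\lambda_i|^2v_iv_i^*$ and $M=\sum_i\begin{bmatrix}1\\ \lambda_i\end{bmatrix}\begin{bmatrix}1&\overline{\lambda_i}\end{bmatrix}\otimes v_iv_i^*$ is $2\times n$ separable with $n+1$ states. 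The harder half—that separability of $M$ forces a completion of this minimal size, i.e.\ that an arbitrary separable representation can be compressed down to $n+1$ states—is the genuine obstacle, and is exactly what Theorem~3.2 of \cite{Wsep} delivers; from the point of view of the present paper, the remaining work is the correct bookkeeping above and the separate treatment of the degenerate normal-$B$ case.
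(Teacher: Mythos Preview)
Your forward direction and your explicit treatment of the degenerate case ($B$ normal, $x\parallel y$) are correct, and in fact tidier than the paper, which tacitly assumes $x,y$ are linearly independent when it invokes Theorem~\ref{normalext1}. The gap is in your converse. You assert that Theorem~3.2 of \cite{Wsep}, combined with $\rank M=\rank\widetilde M=n+1$, directly hands you a normal completion of $B$ of size $(n+1)\times(n+1)$ of the shape $\bigl[\begin{smallmatrix}B&\nu x\\y^*&z\end{smallmatrix}\bigr]$. But that is not what Theorem~3.2 provides, nor what the paper extracts from it: from separability it yields only \emph{some} normal $N=\bigl[\begin{smallmatrix}B&S\\T&P\end{smallmatrix}\bigr]$, of unspecified size, with $BB^*+SS^*\le C$. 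It does not compress an arbitrary separable representation down to $n+1$ states; if it did, Theorem~\ref{sep} would be an immediate corollary of \cite{Wsep} and there would be nothing left to prove.

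The paper supplies exactly the step you are missing. From $SS^*\le C-BB^*=xx^*$ and $T^*T\le C-B^*B=yy^*$ one obtains $S=xv^*$ and $T=wy^*$ for some vectors $v,w$, so both off-diagonal blocks have rank at most one regardless of the size of $N$. The $(1,2)$-block of the normality equation $N^*N=NN^*$ then reads $BT^*+SP^*=B^*S+T^*P$; the left-hand side has range contained in $\spn\{By,x\}$ while the right-hand side has range contained in $\spn\{B^*x,y\}$. Whether this common matrix is zero or not, one concludes that $x,y,B^*x,By$ are linearly dependent. This range argument---not a further appeal to \cite{Wsep}---is the substance of the converse, and it is absent from your proposal.
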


\begin{proof}
First notice that $B^*B-BB^*=xx^*-yy^*$.

Suppose that $ x,y,B^*x,By $ are linearly dependent. Then by
Theorem \ref{normalext1} there exists a normal matrix
$$N=\begin{bmatrix} B & \nu x \\ y^* & z \end{bmatrix},$$
where $|\nu|= 1$. But as $(\nu x )(\nu x)^* = C-BB^*$ it follows
from Theorem 3.2 in \cite{Wsep} that $M$ is $2\times n$ separable,
and that the minimal number of states in a separable
representation of $M$ is $n+1$.

Conversely, suppose that $M$ is $2\times n$ separable. By Theorem
3.2 in \cite{Wsep} there exists a normal matrix
$$ N = \begin{bmatrix} B & S \\ T & P \end{bmatrix} $$ so that
$BB^* + SS^* \le C$. But then $SS^* \le xx^*$ and thus $S=xv^*$
with $\| v \| \le 1$. Also $B^*B + T^* T = BB^* + SS^* \le C$, and
thus $T^*T \le yy^*$ yielding $T=yw^*$ with $\|w \|\le 1$. In
addition, $BT^* + SP^* = B^* S + T^* P$. In particular,
$\range(BT^* + SP^*) = \range(B^*S+T^*P)$. Note that $\range (BT^*
+ SP^*) \subseteq \spn (By, x)$ and $\range(B^*S+T^*P) \subseteq
\spn(B^* x, y)$. But then it follows easily that $ x,y,B^*x,By $
are linearly dependent. Indeed, if $BT^* + SP^* = B^* S + T^* P
\neq 0$, then $\spn(By, x)$ and $\spn(B^* x, y)$ must have a
nontrivial intersection, and if $BT^* + SP^* = B^* S + T^* P = 0$,
then $\spn(By, x)$ and $\spn(B^* x, y)$ are both at most one
dimensional.
\end{proof}

We can now provide a new proof of the following result by
Woronowicz \cite{Woron}.

\begin{thm}\label{Woron}  Let $A,B,C$ be $n\times n$ matrices with $n \le 3$, so that
\begin{equation}
\label{pd2a} M=\begin{bmatrix} A & B^* \\ B & C \end{bmatrix} \ge
0,\quad \widetilde{M}=\begin{bmatrix} A & B \\ B^* & C
\end{bmatrix} \ge 0.
\end{equation}
Then $M$ is $2\times n$ separable.
\end{thm}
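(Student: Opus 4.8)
The plan is to deduce Theorem \ref{Woron} from the normal-completion reformulation of separability used in the proof of Theorem \ref{sep} (Theorem 3.2 of \cite{Wsep}), combined with Theorems \ref{normalext1} and \ref{sep} and Corollary \ref{n=3}, by an induction in which one repeatedly peels off a manifestly separable summand.

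First I would reduce to $A=I_n$. Since $M\ge 0$ forces $\ker A\subseteq\ker B\cap\ker B^*$, compressing the first tensor factor to $\range A$ and taking the Schur complement of the block $C|_{\ker A}$ exhibits $M$ as the sum of $\begin{bmatrix}0&0\\0&1\end{bmatrix}\otimes R$ (with $R\ge 0$, hence separable) and a $2\times n'$ block with $n'=\rank A\le n$, again satisfying the Peres hypotheses; when $A$ is singular this lowers $n$, and when $A$ is invertible conjugation by $I_2\otimes A^{-1/2}$ normalizes the top-left block. So assume $A=I_n$, $n\le 3$, and set $D_1:=C-BB^*\ge 0$, $D_2:=C-B^*B\ge 0$ (the Schur complements of $M$ and $\widetilde M$), so that $D_1-D_2=B^*B-BB^*$. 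By Theorem 3.2 of \cite{Wsep} (cf. the proof of Theorem \ref{sep}), $M$ is $2\times n$ separable if and only if $B$ has a normal completion $\begin{bmatrix}B&S\\ T&P\end{bmatrix}$ of some size with $SS^*\le D_1$ (and then automatically $T^*T\le D_2$); producing such a completion is the goal.

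For the base cases: if $\rank D_1=\rank D_2=1$ (equivalently $\rank M=\rank\widetilde M=n+1$), Theorem \ref{sep} applies at once, since any four vectors $x,y,B^*x,By$ in $\mathbb{C}^{n}$, $n\le 3$, are linearly dependent; if $\rank D_1\le 1$ and $\rank D_2\le 1$ but not both equal $1$, then $B^*B-BB^*$ has rank $\le 1$ and trace $0$, hence is zero, so $B$ is normal and the spectral decomposition of $B$ writes $M$ as separable. In every other case I would peel: $\begin{bmatrix}0&0\\0&R\end{bmatrix}=\begin{bmatrix}0&0\\0&1\end{bmatrix}\otimes R$ is separable for all $R\ge 0$, so it suffices to prove $M-\begin{bmatrix}0&0\\0&R\end{bmatrix}$ separable for a suitable $0\le R\le D_1$ with $R\le D_2$ lowering $\rank D_1+\rank D_2$, and then induct on that quantity. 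Taking $R$ a scaled rank-one projection onto a direction $v\in\range D_1\cap\range D_2$ (scaled to make $D_1-R$ or $D_2-R$ singular, which of the two being decided by comparing $v^*D_1^{+}v$ and $v^*D_2^{+}v$) does this whenever $\range D_1\cap\range D_2\neq\{0\}$. Since $\dim\range D_i=\rank D_i$ and $n\le 3$, this covers everything except --- after interchanging $M$ with its partial transpose $\widetilde M$, which is separable exactly when $M$ is --- the case $\rank D_1=2$, $\rank D_2=1$, $\range D_1\cap\range D_2=\{0\}$, which forces $n=3$ and $\rank(B^*B-BB^*)=3$.

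This last case is the main obstacle. Here $\nd(B)=2$ (by \eqref{lower_bound} and $\nd(B)\le\ud(B)<3$), $H:=B^*B-BB^*$ is invertible with inertia $(2,1)$, and $D_2=\widetilde y\widetilde y^*$. Using that $H+vv^*\ge 0$ iff $v^*H^{-1}v\le-1$, one checks that the requirements $SS^*\le D_1$, $T^*T\le D_2$ together with the $(1,1)$-normality identity $SS^*-T^*T=H$ force a completion of size $n+2$ with $T^*T=\rho_0 D_2$ and $SS^*=D_1-(1-\rho_0)D_2$, where $\rho_0=1/|\widetilde y^*H^{-1}\widetilde y|\in(0,1]$ (the inequality $\rho_0\le 1$ being precisely $D_1\ge 0$). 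So it remains to show that $B$ genuinely admits a size-$(n+2)$ normal completion with these prescribed off-diagonal Gramians; I expect this to be the technical heart, and I would handle it exactly as in the normal-defect-one analysis, now with defect two, building $S$, $T$ and the then-forced $P$ from Woronowicz's antiunitary involution $\iota$ with $\iota B\iota=B^*$ (Remark \ref{aui}), the normality of the completion being automatic from $\iota B=B^*\iota$. Running the induction with all these cases finishes the proof.
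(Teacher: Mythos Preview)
Your peeling/induction strategy is a natural attack, and your case analysis is correct up to and including the identification of the single obstruction: $n=3$, $\rank D_1=2$, $\rank D_2=1$, $\range D_1\cap\range D_2=\{0\}$, forcing $\rank(B^*B-BB^*)=3$ with inertia $(2,1)$. But the resolution you sketch for this case does not go through. The antiunitary involution of Remark~\ref{aui} is stated under the hypothesis $B^*B-BB^*=xx^*-yy^*$, i.e.\ $\rank(B^*B-BB^*)\le 2$; here the self-commutator has rank~$3$, so that tool is unavailable as cited. Your claim that $\rho_0=1$ is correct (indeed $\det D_1=0$ forces $\widetilde y^*H^{-1}\widetilde y=-1$), so you are asking for a size-$5$ normal completion of $B$ with the \emph{specific} off-diagonal Gramians $SS^*=D_1$ and $T^*T=D_2$; the mere fact that $\nd(B)=2$ gives some $5\times5$ normal completion, but not one with these prescribed blocks, and you have not supplied an argument producing it. As written, this is the whole difficulty of the theorem compressed into one unproved sentence.

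The paper takes a completely different route that sidesteps your hard case rather than confronting it. Since the $2\times n$ separable matrices form a convex cone, it suffices to prove separability for generators $(M,\widetilde M)$ of extreme rays of the cone $K'$ of pairs satisfying \eqref{pd2a}. Applying Hildebrand's Lemma~\ref{Hil} with $K=PSD_6\times PSD_6$ (real dimension $72$) and $\mathcal L$ the $36$-dimensional subspace of pairs of the form $(M,\widetilde M)$, one gets that the minimal face of $K$ containing an extreme ray of $K'$ has dimension at most $72-36+1=37$; since that face has dimension $(\rank M)^2+(\rank\widetilde M)^2$, every extreme ray satisfies $(\rank M)^2+(\rank\widetilde M)^2\le 37$. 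After reducing to $A=I_3$ this forces either $\min\{\rank M,\rank\widetilde M\}=3$ (so $B$ is normal) or $\max\{\rank M,\rank\widetilde M\}=4$ (so Theorem~\ref{sep} applies). Your obstruction has $(\rank M,\rank\widetilde M)=(5,4)$, giving $25+16=41>37$; thus such a pair is never extremal and is automatically a convex combination of pairs already handled. In short, the paper replaces your unfinished defect-two normal-completion construction by a soft convexity argument that rules the bad case out of the extreme rays altogether.
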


We will use a result by Hildebrandt which we quote without proof.

\begin{lem}\label{Hil} \cite[Lemma 2.6]{Hil} Let $K$ be a convex
cone in a real vector space ${\mathcal H}$ of finite dimension
$N$, and let ${\mathcal L} \subseteq {\mathcal H}$ be a subspace
of dimension $n$. Let $K' = K\cap {\mathcal L}$ and $x$ the
generator of an extreme ray in $K'$. Then the minimal face in $K$
containing $x$ has dimension at most $N-n+1$.
\end{lem}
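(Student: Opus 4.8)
The plan is to give a direct, self-contained convex-geometric argument, since this statement does not rely on the linear-algebra machinery developed earlier in the paper. The guiding idea is to produce an \emph{explicit} description of the minimal face of $K$ containing $x$, and then to combine a relative-interior property of that face with the defining property of an extreme ray through a single dimension count.

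First I would describe the minimal face explicitly. Set
\[
F=\{\, y\in K : x-ty\in K \text{ for some } t>0 \,\}.
\]
I would check that $F$ is a convex cone (closed under positive scaling, and under addition via $2x-t_1y_1-t_2y_2=(x-t_1y_1)+(x-t_2y_2)\in K$), that $0,x\in F$, that $F$ is a face of $K$ (if $a+b\in F$ with $a,b\in K$ and $x-t(a+b)\in K$, then $x-ta=(x-t(a+b))+tb\in K$, so $a\in F$, and likewise $b\in F$), and that every face $G$ with $x\in G$ contains $F$ (if $y\in F$ then $x=(x-ty)+ty$ forces $x-ty,\,ty\in G$, whence $y\in G$). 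Thus $F$ is the minimal face of $K$ containing $x$, and $\dim F=\dim V$, where $V:=\spn(F)$.

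The technical heart, which I expect to be the main obstacle, is the relative-interior property: for every $w\in V$ there is $\varepsilon>0$ with both $x-\varepsilon w\in K$ and $x+\varepsilon w\in K$. To prove it I would write $w=p-q$ with $p,q\in F$ (possible because $V=F-F$ for a cone $F$), choose $t_p,t_q>0$ with $x-t_pp,\,x-t_qq\in K$, and observe that for $0<\varepsilon\le\min(t_p,t_q)$ one has $x-\varepsilon p=(1-\tfrac{\varepsilon}{t_p})\,x+\tfrac{\varepsilon}{t_p}(x-t_pp)\in K$, so that $x-\varepsilon w=(x-\varepsilon p)+\varepsilon q\in K$; the $+$ sign is handled symmetrically using $x-\varepsilon q\in K$. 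Getting the convex combinations and the bookkeeping of the $\varepsilon$'s exactly right is the only delicate point here.

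With this in hand I would finish quickly. Take any $w\in V\cap\mathcal{L}$; then $x\pm\varepsilon w\in K$, and since $x,w\in\mathcal{L}$, in fact $x\pm\varepsilon w\in K\cap\mathcal{L}=K'$. The identity $2x=(x+\varepsilon w)+(x-\varepsilon w)$ expresses $2x$ as a sum of two elements of $K'$; because $x$ generates an extreme ray of $K'$, each summand is a nonnegative multiple of $x$, forcing $w\in\mathbb{R}x$. Hence $V\cap\mathcal{L}\subseteq\mathbb{R}x$ and $\dim(V\cap\mathcal{L})\le 1$. The dimension formula $\dim(V+\mathcal{L})=\dim V+\dim\mathcal{L}-\dim(V\cap\mathcal{L})$, together with $\dim(V+\mathcal{L})\le N$ and $\dim\mathcal{L}=n$, then yields $\dim F=\dim V\le N-n+1$, as claimed.
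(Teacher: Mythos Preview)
Your proof is correct. Note, however, that the paper does not actually prove this lemma: it is quoted verbatim from \cite[Lemma~2.6]{Hil} and introduced with the sentence ``We will use a result by Hildebrandt which we quote without proof.'' So there is no proof in the paper to compare against.

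That said, your argument is the natural one and is essentially what appears in Hildebrand's paper (and in standard convex-analysis references): identify the minimal face $F$ explicitly, show that $x$ lies in its relative interior, deduce that $\spn(F)\cap\mathcal{L}\subseteq\mathbb{R}x$ from the extreme-ray property, and finish with the dimension formula. Each step you outline goes through as written; in particular, your bookkeeping for the relative-interior claim (writing $w=p-q$ with $p,q\in F$ and taking $\varepsilon\le\min(t_p,t_q)$) is exactly right, and no closedness hypothesis on $K$ is needed anywhere.
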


Notice that if we consider the cone $PSD_n$ of  $n\times n$
complex positive semidefinite matrices, then the minimal face
containing $M \ge 0$, is the cone $F=\{ GCG^*\colon   C \in PSD_k
\}$, where $M=GG^*$ with $\Null(G) =\{ 0\}$, and $k = \rank(M)$.
In particular, the real dimension of this minimal face is
$(\rank(M))^2$.

\begin{proof}[Proof of Theorem \ref{Woron}] Since the case $n<3$
can be embedded into the case $n=3$, we will focus on the latter.
As the $2\times n$ separable matrices form a convex cone, it
suffices to prove the result for pairs $(M,\widetilde{M})$ that
generate extreme rays in the cone of pairs of matrices as in
\eqref{pd2a}. If we apply Lemma \ref{Hil} with the choices of $K =
PSD_6 \times PSD_6$ and ${\mathcal L}$ the subspace
$$\left\{ \left( \begin{bmatrix} A & B^* \\ B & C \end{bmatrix}, \begin{bmatrix} A & B \\ B^* & C
\end{bmatrix}\right)\right\}$$
in the (real) vector space of pairs of Hermitian matrices of size
$6\times 6$, then $K'$ is the cone of pairs of matrices as in
\eqref{pd2a}. By Lemma \ref{Hil} the minimal faces in $K$ containing
extreme rays of $K'$ cannot have dimension greater than
$72-36+1=37$. However, the minimal face in $K$ containing
$(M,\widetilde{M})$ (which generates an extreme ray in $K'$) has
dimension $(\rank(M))^2 + (\rank(\widetilde{M}) )^2$, and hence the
vector $(\rank(M), \rank(\widetilde{M}) )\in\mathbb{R}^2$ lies in the
closed disk of radius $\sqrt{37}$ centered at the origin. This now gives
that either $\min \{ \rank(M),\rank(\widetilde{M}) \} \le 3$ or $\max \{
\rank(M),\rank(\widetilde{M}) \} = 4. $ Next, as in Proposition 3.1 in
\cite{Wsep} we can assume that $A=I$. If now $\min \{ \rank (M),\rank
(\widetilde{M}) \} \le 3$ we have that $C=BB^*=B^*B$, and thus $B$ is
normal, which yields by Theorem 3.2 in \cite{Wsep} that $M$ is
$2\times n$ separable. On the other hand, if $\max \{
\rank(M),\rank(\widetilde{M}) \} = 4$ we can conclude by Theorem
\ref{sep} that $M$ is $2\times n$ separable (as 4 vectors in ${\mathbb
C}^n$ are always linearly dependent when $n\le 3$).
\end{proof}

It should be noted that the original statement of Woronowicz is
formulated in the dual form: if $\Phi : {\mathbb C}^{2\times 2}
\to {\mathbb C}^{n\times n}$ is a positive linear map (thus $\Phi
( PSD_2 ) \subseteq PSD_n$) and $n \le 3$, then $\Phi$ must be
decomposable. That is, $\Phi$ must be of the form $\Phi (M) =
\sum_{i=1}^k R_i M R_i^* + \sum_{i=1}^l S_i M^T S_i^*$. \medskip

{\it Acknowledgment.} The authors wish to thank Dr. Roland
Hildebrand for pointing out how Theorem \ref{sep} in conjunction
with his results, leads to a proof of the result by Woronowicz.

\end{document}